\newcommand\PT{\mathcal{PT}}
\newcommand\diag{\operatorname{diag}}
\newcommand\flr[1]{\left\lfloor #1\right\rfloor}
\newcommand\row{\operatorname{row}}
\newcommand\des{\operatorname{des}}
\newcommand\hasc{\operatorname{hasc}}
\newcommand\asc{\operatorname{asc}}
\newcommand\pat{\operatorname{pat}}
\newcommand\fwex{\operatorname{fwex}}
\newcommand\fneg{\operatorname{fneg}}
\newcommand\sh{\operatorname{sh}}
\newcommand\w{\operatorname{w}}
\newcommand\tr{\operatorname{tr}}
\newcommand\fdes{\operatorname{fdes}}
\newcommand\fexc{\operatorname{fexc}}
\renewcommand\neg{\operatorname{neg}}
\newcommand\wex{\operatorname{wex}}
\newcommand\exc{\operatorname{exc}}
\newcommand\st{\operatorname{st}}
\newcommand\rev{\mathbf{r}}
\newcommand{\cmo}{\genfrac{}{}{0pt}{}{}{-}}
\newcommand{\cmp}{\genfrac{}{}{0pt}{}{}{+}}
\newcommand\cro{\operatorname{cr}}
\newcommand\al{\operatorname{al}}
\newcommand\so{\operatorname{so}}
\newcommand\floor[1]{\left\lfloor #1 \right\rfloor}
\def\cell(#1,#2)[#3]{
\ax=#2 \ay=#1
\multiply\ay by-1
\bx=\ax \by=\ay
\cx=\ax \cy=\ay
\dx=\ax \dy=\ay
\advance\bx by-1
\advance\dy by1
\advance\cx by-1
\advance\cy by1
\psline (\dx,\dy)(\ax,\ay)(\bx,\by)
\rput(\number\cx.5,
\ifnum\cy=0 -0.5\else\number\cy.5\fi){#3}
}
\def\vvput#1#2{\pnode(#1,1){#2} \pscircle*(#1,1){.1}}
\def\vput#1{\pnode(#1,1){#1} \pscircle*(#1,1){.1}}
\def\edge#1#2{\ncarc[arcangle=80]{c-c}{#1}{#2}}
\def\pig#1{\rput(#1,1){\psellipse(0,0)(.5,.25)}}
\def\halfarc(#1){
\raisebox{-7pt}{\pspicture(-.5,-.5)(.5,.5)\psdot(0,0)\psline(0,0)(#1)\endpspicture}}
\newtheorem{thm}{Theorem}[section]
\newtheorem{lem}[thm]{Lemma}
\newtheorem{prop}[thm]{Proposition}
\theoremstyle{definition}
\newtheorem{example}{Example}
\newtheorem{defn}{Definition}
\theoremstyle{remark}
\newtheorem{remark}{Remark}
\newcommand{\ket}[1]{\ensuremath{|#1\rangle}}
\newcommand{\bra}[1]{\ensuremath{\langle #1|}}
\DeclareMathOperator{\mot}{31-2}
\DeclareMathOperator{\mott}{2-31}
\newcommand{\nodr}{ \psset{unit=2mm}\begin{pspicture}(-1,-0.6)(1.4,1.6)
\psdot(0,0)
\pscurve(0,0)(0.7,0.8)(1,1)\pscurve[linestyle=dotted,dotsep=0.2mm](1,1)(1.3,1.2)(1.6,1.3)
\pscurve(0,0)(0.7,-0.8)(1,-1)\pscurve[linestyle=dotted,dotsep=0.2mm](1,-1)(1.3,-1.2)(1.6,-1.3)
\end{pspicture} }
\newcommand{\nodhp}{\psset{unit=2mm}\begin{pspicture}(-1,-0.6)(1.4,2)
 \psdot(0,0)
\pscurve(0,0)(0.7,0.8)(1,1)\pscurve[linestyle=dotted,dotsep=0.2mm](1,1)(1.3,1.2)(1.6,1.3)
\pscurve(0,0)(-0.7,0.8)(-1,1)\pscurve[linestyle=dotted,dotsep=0.2mm](-1,1)(-1.3,1.2)(-1.6,1.3)
\rput(0.6,1.4){+}
\end{pspicture}}
\newcommand{\nodhm}{\psset{unit=2mm}\begin{pspicture}(-1,-0.6)(1.4,2)
 \psdot(0,0)
\pscurve(0,0)(0.7,0.8)(1,1)\pscurve[linestyle=dotted,dotsep=0.2mm](1,1)(1.3,1.2)(1.6,1.3)
\pscurve(0,0)(-0.7,0.8)(-1,1)\pscurve[linestyle=dotted,dotsep=0.2mm](-1,1)(-1.3,1.2)(-1.6,1.3)
\rput(0.6,1.6){$-$}
\end{pspicture}}
\newcommand{\nodbp}{\psset{unit=2mm}\begin{pspicture}(-1,-0.6)(1.4,1.6)
 \psdot(0,0)
\pscurve(0,0)(0.7,-0.8)(1,-1)\pscurve[linestyle=dotted,dotsep=0.2mm](1,-1)(1.3,-1.2)(1.6,-1.3)
\pscurve(0,0)(-0.7,-0.8)(-1,-1)\pscurve[linestyle=dotted,dotsep=0.2mm](-1,-1)(-1.3,-1.2)(-1.6,-1.3)
\rput(0.6,-1.6){$+$}
\end{pspicture}}
\newcommand{\nodbm}{\psset{unit=2mm}\begin{pspicture}(-1,-0.6)(1.4,1.6)
 \psdot(0,0)
\pscurve(0,0)(0.7,-0.8)(1,-1)\pscurve[linestyle=dotted,dotsep=0.2mm](1,-1)(1.3,-1.2)(1.6,-1.3)
\pscurve(0,0)(-0.7,-0.8)(-1,-1)\pscurve[linestyle=dotted,dotsep=0.2mm](-1,-1)(-1.3,-1.2)(-1.6,-1.3)
\rput(0.6,-1.6){$-$}
\end{pspicture}}
\newcommand{\nodg}{\psset{unit=2mm}\begin{pspicture}(-1,-0.7)(1.4,1.3)
 \psdot(0,0)
\pscurve(0,0)(-0.7,0.8)(-1,1)\pscurve[linestyle=dotted,dotsep=0.2mm](-1,1)(-1.3,1.2)(-1.6,1.3)
\pscurve(0,0)(-0.7,-0.8)(-1,-1)\pscurve[linestyle=dotted,dotsep=0.2mm](-1,-1)(-1.3,-1.2)(-1.6,-1.3)
\end{pspicture} }
\newcommand{\nodfm}{ \psset{unit=2mm}\begin{pspicture}(-1,-0.6)(1.4,1.6)
 \psdot(0,0)\pscurve(0,0)(-0.3,0.6)(0,0.8)(0.3,0.6)(0,0) \rput(0,1.2){$-$}\end{pspicture}
}
\newcommand{\nodfp}{ \psset{unit=2mm}\begin{pspicture}(-1,-0.6)(1.4,1.6)
 \psdot(0,0)\pscurve(0,0)(-0.3,0.6)(0,0.8)(0.3,0.6)(0,0) \rput(0,1.4){$+$}\end{pspicture}
}
\title{Combinatorics of the permutation tableaux of type $B$}
\author{Sylvie Corteel, Matthieu Josuat-Verg\`es and Jang Soo Kim}
\address{LIAFA, CNRS and Universit\'e Paris-Diderot, Case 7014, 75205 Paris Cedex 13, FRANCE}
\email{corteel@liafa.jussieu.fr}
\address{Institut Gaspard Monge, CNRS and Universit\'e de Marne-la-Vall\'ee, FRANCE}
\email{matthieu.josuat-verges@univ-mlv.fr}
\address{
School of Mathematics,
University of Minnesota,
Minneapolis, Minnesota 55455, USA}
\email{kimjs@math.umn.edu}
\thanks{All authors are partially supported by the grant ANR08-JCJC-0011.
The second author was supported by the Austrian Science foundation FWF (START grant Y463)
while he was postdoctoral researcher at the university of Vienna.
}
\begin{document} 

\begin{abstract}
Permutation tableaux are combinatorial objects related with permutations and various statistics on them.
They appeared in connection with total positivity in Grassmannians, and stationary probabilities in a PASEP model.
In particular they gave rise to an interesting $q$-analog of Eulerian numbers. The purpose of this
article is to study some combinatorial properties of type $B$ permutation tableaux, defined by Lam and Williams,
and links with signed permutation statistics.

We show that many of the tools used for permutation tableaux generalize in this case, including: the Matrix Ansatz 
(a method originally related with the PASEP), bijections with labeled paths and links with continued fractions, bijections 
with signed permutations. In particular we obtain a $q$-analog of the type $B$ Eulerian numbers, having a lot in common
with the previously known $q$-Eulerian numbers: for example they have a nice symmetry property, they have the type
$B$ Narayana numbers as constant terms.

The signed permutation statistics arising here are of several kinds. Firstly, there are several variants of descents
and excedances, and more precisely of flag descents and flag excedances. 
Other statistics are the crossings and alignments,
which generalize a previous definition on (unsigned) permutations. There are also some pattern-like statistics arising
from variants of the bijection of Fran\c con and Viennot.
\end{abstract}

\setcounter{tocdepth}{1}
\maketitle
\tableofcontents

\section{Introduction}

Permutation tableaux of type $B$ have been introduced in \cite{LamWilliams}, and 
further studied in \cite{CJW,CK}. We start this introduction by some definitions.
A \emph{Ferrers diagram} is a top and left justified arrangement of square cells with
possibly empty rows and columns.  The \emph{length} of a Ferrers diagram is the
sum of the number of rows and the number of columns. If a Ferrers diagram is of
length $n$, we label the steps in the south-east border of the Ferrers diagram
with $1,2,\ldots,n$ from north-east to south-west. We will also call the step
labeled with $i$ the $i$th step. The following is an example of a Ferrers
diagram.
\begin{equation}
  \label{eq:ferrers}
\raisebox{-20pt}{
  \begin{pspicture}(0,0)(4,-3)
    \psline(4,0)(0,0)(0,-3)
    \cell(1,1)[]  \cell(1,2)[]  \cell(1,3)[]
    \cell(2,1)[]  \cell(2,2)[]
    \tiny    \rput(3.5,-.2){1}
    \rput(2.5,-1.2){3}
    \rput(1.5,-2.2){5}
    \rput(0.5,-2.2){6}
    \rput(0.2,-2.5){7}
    \rput(2.2,-1.5){4}
    \rput(3.2,-0.5){2}
 \end{pspicture}
}\end{equation}

A permutation tableau is a 0,1-filling of a Ferrers diagram satisfying the
following conditions: (1) each column has at least one $1$ and (2) there is no
$0$ which has a $1$ above it in the same column and a $1$ to the left of it in
the same row. The following is an example of a permutation tableau. 
\begin{equation}
  \label{eq:pt}
\raisebox{-30pt}{
\begin{pspicture}(0,0)(4,-4) 
\psline(0,-4)(0,0)(4,0) \cell(1,1)[1]
  \cell(1,2)[0] \cell(1,3)[0] \cell(1,4)[1] \cell(2,1)[0] \cell(2,2)[1]
  \cell(2,3)[1] \cell(3,1)[0] \cell(3,2)[0] \cell(3,3)[1] \tiny
  \rput(4.2,-0.5){1} \rput(3.2,-1.5){3} \rput(3.2,-2.5){4} \rput(0.2,-3.5){8}
  \rput(3.5,-1.2){2} \rput(2.5,-3.2){5} \rput(1.5,-3.2){6} \rput(0.5,-3.2){7}
\end{pspicture}}
\end{equation}

For a Ferrers diagram $F$ with $k$ columns, the \emph{shifted} Ferrers diagram
of $F$, denoted $\overline F$, is the diagram obtained from $F$ by adding $k$
rows of size $1,2,\ldots,k$ above it in increasing order. The \emph{length} of
$\overline F$ is defined to be the length of $F$.  A \emph{diagonal cell} is the
rightmost cell of an added row. For example, if $F$ is the Ferrers diagram in
\eqref{eq:ferrers} then $\overline F$ is the diagram in \eqref{eq:ptb} without
the 0's and 1's.

A \emph{type~$B$ permutation tableau} of length $n$ is a $0,1$-filling of a
shifted Ferrers diagram of length $n$ satisfying the following conditions: (1)
each column has at least one $1$, (2) there is no $0$ which has a $1$ above it
in the same column and a $1$ to the left of it in the same row, and (3) if a $0$
is in a diagonal cell, then it does not have a $1$ to the left of it in the same
row.  The following is an example of a type~$B$ permutation tableau. 
\begin{equation}
  \label{eq:ptb}
\raisebox{-50pt}{
  \begin{pspicture}(0,0)(4,-7)
    \multido{\n=0+1}{4}{\rput(\n,-\n){\psline{C-C}(1,0)}} \psline(0,0)(0,-7)
    \cell(1,1)[0] \cell(2,1)[1] \cell(2,2)[1] \cell(3,1)[0] \cell(3,2)[0]
    \cell(3,3)[0] \cell(4,1)[0] \cell(4,2)[1] \cell(4,3)[0] \cell(4,4)[1]
    \cell(5,1)[1] \cell(5,2)[1] \cell(5,3)[1] \cell(6,1)[0] \cell(6,2)[1]
    \tiny
    \rput(0,-4){
      \tiny    \rput(3.5,-.2){1}
      \rput(2.5,-1.2){3}
      \rput(1.5,-2.2){5}
      \rput(0.5,-2.2){6}
      \rput(0.2,-2.5){7}
      \rput(2.2,-1.5){4}
      \rput(3.2,-0.5){2}
    }
\end{pspicture}}
\end{equation}

We respectively denote by $\PT(n)$ and $\PT_B(n)$ the sets of permutation
tableaux and type $B$ permutation tableaux of length $n$. For $T\in \PT(n)$ or
$T\in\PT_B(n)$, a $1$ is called \emph{superfluous} if it has a $1$ above it in
the same column. Let $\so(T)$ denote the number of superfluous ones in $T$ and
$\row(T)$ denote the number of rows of $T$ except the added rows. For
$T\in\PT_B(n)$, let $\diag(T)$ be the number of ones on the diagonal of the
added rows. Let
\[
B_n(y,t,q)=\sum_{T\in  \PT_B(n)}y^{2\row(T)+\diag(T)}t^{\diag(T)}q^{\so(T)}.
\]
For example:
\begin{align*}
  B_0(y,t,q) &= 1, \\
  B_1(y,t,q) &= y^2+yt, \\
  B_2(y,t,q) &= y^4+(2t+tq)y^3+(t^2q+t^2+1)y^2+ty.
\end{align*}
Let also $B_{n,k}(t,q)=[y^k]B_n(y,t,q)$, which is nonzero when $n=k=0$ or $1\leq
k \leq 2n$. Here $[y^k]f(y)$ means the coefficient of $y^k$ in $f(y)$. 

In the case $t=0$, we have $B_{n,2k+1}(0,q)=0$, and $B_{n,2k}(0,q)$ is the $q$-Eulerian number $E_{n,k}(q)$ 
defined by Williams \cite{Williams2005} such that:
\[
  E_{n,k}(q) =  \sum_{\substack{T\in  \PT(n) \\ \row(T)=k}} q^{\so(T)}.
\]
Indeed by condition (3) in the definition, a type $B$ permutation tableau with no $1$ in a diagonal cell
has only $0$'s in the added row and is equivalent to a permutation tableau. Some properties are following:
\begin{itemize}
 \item There is a symmetry such that $E_{n,k}(q)=E_{n,n+1-k}(q)$, and some special values are: $E_{n,k}(-1)=\binom{n-1}{k-1}$, 
$E_{n,k}(0)=\frac 1n \binom{n}{k}\binom{n}{k-1}$ (the Narayana number) \cite{Williams2005}. 
\item There is a continued fraction expansion for the generating function
  $\sum_{k=0}^n E_{n,k}(q) y^kz^n$, and two combinatorial
  interpretation in terms of permutations are known: a first one with excedances
  and crossings, a second one with descents and the pattern 31-2
  \cite{Corteel2007}.
\item There is an explicit formula for the $q$-Eulerian polynomial $\sum_{k=1}^n
  y^k E_{n,k}(q) $ \cite{J,Williams2005}.
\end{itemize}
We will see throughout the article that all these properties can be generalized
to $B_{n,k}(t,q)$ or $B_{n,k}^*(t,q)$ which is a variation of $B_{n,k}(t,q)$.

However the link with type $B$ Eulerian numbers is not completely
immediate. Adin et al.~\cite{adin} defined $\fdes(\pi)$, the number of flag descents of $\pi\in B_n$.
This refines $\des_B(\pi)$ the Coxeter theoretic definition of the number of descents (see
\cite{bjorner}) in the sense that $ \lceil \fdes(\pi)/2 \rceil =
\des_B(\pi)$. Here $B_n$ denotes the set of signed permutations of $[n]$, see
Section~\ref{bij} for the definition of the statistics. There is also a definition of flag
excedances \cite{foata}.  These flag statistics are the relevant ones in this
context.

%

Using a zigzag bijection of \cite{LamWilliams}, we will show in Section
\ref{bij} the following theorem.
\begin{thm} \label{thm:ZZ} For a nonnegative integer $n$ we have
\begin{equation} \label{bnfwexcro}
B_n(y,t,q)=\sum_{\pi\in B_n} y^{\fwex(\pi)}t^{\neg(\pi)}q^{\cro(\pi)}.
\end{equation}
Equivalently, we have
\begin{equation}
  \label{eq:Bnk}
B_{n,k}(t,q)=\sum_{\substack{\pi\in B_n\\ \fwex(\pi)=k}} t^{\neg(\pi)}q^{\cro(\pi)}.  
\end{equation}
\end{thm}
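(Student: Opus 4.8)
\emph{Proof strategy.}
The plan is to prove \eqref{bnfwexcro} by producing an explicit bijection $\Phi\colon\PT_B(n)\to B_n$ that transports the triple of tableau statistics $\bigl(2\row(T)+\diag(T),\ \diag(T),\ \so(T)\bigr)$ to the triple $\bigl(\fwex(\pi),\ \neg(\pi),\ \cro(\pi)\bigr)$ on a signed permutation; summing the weight $y^{2\row(T)+\diag(T)}t^{\diag(T)}q^{\so(T)}$ over $\PT_B(n)$ then gives \eqref{bnfwexcro}, and extracting the coefficient of $y^{k}$ gives the equivalent form \eqref{eq:Bnk}. The map $\Phi$ will be the zigzag map of Lam--Williams \cite{LamWilliams}: one scans the shifted Ferrers diagram underlying $T$ in zigzag order and, from each south-east border step, follows the local ``up-then-left'' rule through the $1$'s and $0$'s of $T$ to reach another border step or to terminate on a diagonal cell; reading off the resulting correspondence produces the two-line notation of a signed permutation $\pi\in B_n$, condition~(3) on the diagonal cells being exactly what guarantees this is well defined, and a diagonal $1$ being exactly the local configuration that yields a negative letter.

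I would quote \cite{LamWilliams} for the fact that $\Phi$ is a bijection (if a self-contained argument is wanted, the filling of $T$ can be recovered row by row from the arc/cycle diagram of $\pi$, with the negative letters of $\pi$ marking the diagonal $1$'s). This reduces the theorem to three statistic identities, which I would verify by tracing the zigzag rule, using the type~$A$ prototype as a guide. Recall that in type~$A$ the zigzag map carries the number of rows of a permutation tableau to the number of weak excedances of its image and the number of superfluous $1$'s to the number of crossings, and that $\Phi$ restricts to this map on the sub-family where $T$ has no $1$ on the diagonal (there condition~(3) forces the added rows to be empty, $\pi$ has no negative letter, and everything collapses to the identity $B_{n,2k}(0,q)=E_{n,k}(q)$ recorded above). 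The three identities are: (i) $\diag(T)=\neg(\pi)$, because the diagonal $1$'s are precisely the local configurations producing negative letters; (ii) $\so(T)=\cro(\pi)$, because each superfluous $1$ of $T$ corresponds to a crossing of $\pi$ in the type~$B$ sense defined in Section~\ref{bij}, and conversely; and (iii) $\row(T)$ equals the (positive) weak excedance count $\wex(\pi)$, which combined with (i) and the expression for $\fwex$ in terms of $\wex$ and $\neg$ recalled in Section~\ref{bij} gives $\fwex(\pi)=2\row(T)+\diag(T)$. Summing over $\PT_B(n)$ then yields \eqref{bnfwexcro}.

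The main obstacle is item (iii): one has to track, step by step through the zigzag reading, how the non-added rows, the added rows and the diagonal cells each affect the value assigned to a border step, and confirm that the net effect is one positive weak excedance per non-added row and one negative letter (hence one unit of $\fwex$ beyond the doubled $\wex$) per diagonal $1$, with no cross-terms between the diagonal and the sign data. Item (ii) carries a similar but milder risk --- that reading superfluous $1$'s as crossings behaves correctly for configurations involving the diagonal --- and handling the diagonal cells uniformly with the ordinary cells in the statement of the zigzag rule is what makes (ii) and (iii) come out cleanly; throughout, the $t=0$ specialization and the small cases $B_1$, $B_2$ computed above serve as consistency checks.
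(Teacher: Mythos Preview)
Your approach is correct and is essentially the one the paper takes: the paper defines the zigzag bijection $\Phi_B:\PT_B(n)\to B_n$ in Section~\ref{bij} and then invokes Proposition~\ref{prop:zigzag} (quoted from \cite[Theorem~4]{CJW}) for exactly the three statistic identities you list, namely $\wex(\pi)=\row(T)$, $\neg(\pi)=\diag(T)$, and $\cro(\pi)=\so(T)$; combining these with $\fwex(\pi)=2\wex(\pi)+\neg(\pi)$ gives \eqref{bnfwexcro}. The only difference is that the paper outsources the verification of (i)--(iii) to \cite{CJW} (noting in a remark that the $\wex=\row$ part is immediate from the definition of $\Phi_B$), whereas you sketch how one would check them directly.
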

Here, $\fwex(\pi)$ denotes the number of flag weak excedances, {\it i.e.} a
variant of flag excedances, equidistributed with $(\fdes+1)$. Theorem~\ref{thm:ZZ}
is a type $B$ analog of a result of Steingr\'{i}msson and Williams
\cite{Steingrimsson2007}. In Section~\ref{bij} we study how $\fwex$ is related
with the type $B$ descent $\des_B$. 

In Section~\ref{sec:qEuler} we define a $q$-Eulerian number of type $B$ by
\begin{equation}
  \label{eq:12}
E_{n,k}^B(q) =\sum_{\substack{\pi\in B_n\\ \flr{\fwex(\pi)/2}=k}} q^{\cro(\pi)},
\end{equation}
which is equivalent to $E^B_{n,k}(q)=B_{n,2k}(1,q)+B_{n,2k+1}(1,q)$.  We then
introduce a ``pignose diagram'' representing a signed permutation and prove the
following theorem.

\begin{thm} \label{qeulrianb}
 For any $0\leq k \leq n$, we have $E_{n,k}^B(-1)=\binom nk$,  $E_{n,k}^B(0)=\binom nk^2$ (the Narayana numbers of type $B$),
$E_{n,k}(1)$ is the type $B$ Eulerian number (see \cite{bjorner}), and 
\begin{equation}
  \label{eq:Esym}
 E^B_{n,k}(q)=E^B_{n,n-k}(q).  
\end{equation}
\end{thm}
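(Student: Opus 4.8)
The plan is to handle the four assertions one at a time, using as the main bridge the identity $E_{n,k}^B(q)=B_{n,2k}(1,q)+B_{n,2k+1}(1,q)$ together with Theorem~\ref{thm:ZZ}, which lets us rewrite $B_{n,j}(1,q)=\sum_{\pi\in B_n,\ \fwex(\pi)=j}q^{\cro(\pi)}$.

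\emph{Value at $q=1$.} Here $E_{n,k}^B(1)=\#\{\pi\in B_n:\flr{\fwex(\pi)/2}=k\}$. Since $\fwex$ is equidistributed with $\fdes+1$ on $B_n$, the statistic $\flr{\fwex/2}$ is equidistributed with $\flr{(\fdes+1)/2}=\ceiling{\fdes/2}=\des_B$, so $E_{n,k}^B(1)$ counts the $\pi\in B_n$ with $\des_B(\pi)=k$, which is the type $B$ Eulerian number.

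\emph{Values at $q=0$ and $q=-1$.} I would first derive a Jacobi-type continued fraction for $\sum_{n\ge0}z^n\sum_k E_{n,k}^B(q)\,y^k$, using the type $B$ Matrix Ansatz or, equivalently, the bijection between type $B$ permutation tableaux and labelled Motzkin paths; the step weights are products of $q$-integers and powers of $y$ recording $\fwex$. Putting $q=0$ collapses the $q$-integers and leaves the continued fraction for the type $B$ Narayana numbers, giving $E_{n,k}^B(0)=\binom nk^2$. Putting $q=-1$ makes $[m]_{-1}$ alternate between $0$ and $1$, the continued fraction truncates to $\frac{1}{1-(1+y)z}$, and extracting $[y^kz^n]$ gives $E_{n,k}^B(-1)=\binom nk$. (Both specializations can instead be obtained directly on $B_n$: crossing-free signed permutations with $\flr{\fwex/2}=k$ are counted by the type $B$ Narayana numbers, and at $q=-1$ a sign-reversing involution toggling a distinguished crossing kills all but $\binom nk$ terms in each $\fwex$-class.)

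\emph{Symmetry.} For $n\ge1$ it suffices to prove the palindromicity $B_{n,j}(1,q)=B_{n,2n+1-j}(1,q)$ (the case $n=0$ is trivial), for then $B_{n,2k}(1,q)+B_{n,2k+1}(1,q)=B_{n,2(n-k)+1}(1,q)+B_{n,2(n-k)}(1,q)=E_{n,n-k}^B(q)$. I would get this from the symmetry of the refined polynomials $B_{n,k}^*(t,q)$ established separately in this section, or directly by constructing a weight-preserving involution on $B_n$ that fixes $\cro$ and sends each $\pi$ with $\fwex(\pi)=j$ to one with $\fwex=2n+1-j$ — a flag analog of the reverse--complement map, whose effect on $\cro$ must be controlled. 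Verifying that this involution (equivalently, locating the symmetry among the continued fraction coefficients) has exactly the prescribed effect on both $\fwex$ and $\cro$ is the step I expect to be the main obstacle; once the continued fraction is in hand, the other three evaluations are comparatively routine.
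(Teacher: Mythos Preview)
Your outline matches the paper's route closely: the case $q=1$ is exactly Proposition~\ref{prop:fwex_desB}; the symmetry \eqref{eq:Esym} is deduced, as you propose, from $B_{n,k}^*(t,q)=B_{n,2n+1-k}^*(t,q)$ (Theorem~\ref{thm:sym}), which the paper establishes by an explicit bijection on pignose diagrams (Proposition~\ref{prop:bij}); and the values at $q=-1$ and $q=0$ are read off from the continued fraction and the enumeration formula, respectively.

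Two points where your sketch diverges from what the paper actually does. First, the continued fraction of Theorem~\ref{bnfrac} is for $\sum_n B_n(y,1,q)z^n$, not for $\sum_n z^n\sum_k E_{n,k}^B(q)y^k$; at $q=-1$ it truncates (since $\lambda_2=0$) to $1+\tfrac{(y+y^2)z}{1-(1+y^2)z}$, not to $\tfrac1{1-(1+y)z}$, and one finishes by extracting $[y^{2k}]+[y^{2k+1}]$ and applying Pascal's identity. Second, for $q=0$ the paper does not argue that the continued fraction ``collapses'' to a Narayana generating function; instead it specializes the explicit formula of Theorem~\ref{bnformula} and simplifies an alternating binomial sum. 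Your parenthetical bijective alternative for $q=0$ (crossing-free signed permutations $\leftrightarrow$ type $B$ noncrossing partitions) is valid and is recorded in the paper as a separate remark, but it is not the main argument there.
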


In fact we will prove a more general symmetry than \eqref{eq:Esym}.  Observe
that the identity $B_{n,k}(1,q)=B_{n,2n+1-k}(1,q)$ implies
\eqref{eq:Esym}. However, for general $t$, we have $B_{n,k}(t,q) \ne
B_{n,2n+1-k}(t,q)$. For instance, $B_{1,1}(t,q)=t$ and $B_{1,2}(t,q)=1$.
There is a way to fix this discrepancy. Let
\[
B_{n,k}^*(t,q) =\sum_{\substack{\pi\in B_n\\ \fwex(\pi)=k}} t^{\neg(\pi) + \chi(\pi_1>0)}q^{\cro(\pi)},
\]
where $\chi(\pi_1>0)$ is $1$ if $\pi_1>0$ and $0$ otherwise. 
We will prove the following symmetry by a combinatorial argument:
\begin{thm} \label{bnsym}
For $1\le k\le 2n$, we have
\[ 
  B_{n,k}^*(t,q)=B_{n,2n+1-k}^*(t,q).
\]  
In particular, when $t=1$, we have $B_{n,k}(1,q)=B_{n,2n+1-k}(1,q)$. 
\end{thm}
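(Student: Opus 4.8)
The plan is to prove the symmetry by constructing a sign‑reversing involution on signed permutations. Since, by definition, $B_{n,k}^*(t,q)=\sum_{\pi\in B_n,\ \fwex(\pi)=k}t^{\neg(\pi)+\chi(\pi_1>0)}q^{\cro(\pi)}$, it suffices to produce an involution $\Phi\colon B_n\to B_n$ with
\[
\fwex(\Phi(\pi))=2n+1-\fwex(\pi),\qquad \cro(\Phi(\pi))=\cro(\pi),\qquad \neg(\Phi(\pi))+\chi(\Phi(\pi)_1>0)=\neg(\pi)+\chi(\pi_1>0),
\]
for then pairing the monomial of $\pi$ in $B_{n,k}^*$ with that of $\Phi(\pi)$ in $B_{n,2n+1-k}^*$ gives the claim, and $t=1$ is the stated specialization. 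Using the description of $\fwex$ from Section~\ref{bij}, namely $\fwex(\pi)=2\wex(\pi)+\neg(\pi)$ with $\wex(\pi)=|\{i:\pi_i\ge i\}|$, one checks that the first and third conditions already \emph{force} $\Phi$ to reverse the sign of the first letter (so that $\chi(\Phi(\pi)_1>0)=1-\chi(\pi_1>0)$ and $\neg(\Phi(\pi))=\neg(\pi)+2\chi(\pi_1>0)-1$), and then to have a prescribed number of weak excedances: as many as $\pi$ has \emph{positive deficiencies} (positions $i$ with $0<\pi_i<i$), plus one more when $\pi_1<0$. This is the structure $\Phi$ must realize while also keeping $\cro$ fixed.

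The construction of $\Phi$ follows the scheme behind the type~$A$ identity $E_{n,k}(q)=E_{n,n+1-k}(q)$ \cite{Williams2005,Corteel2007}: the backbone is the operation $\pi\mapsto\pi^{-1}$, corrected locally at the vertex $1$ and at negative fixed points. The essential feature of $\pi\mapsto\pi^{-1}$ is that, on the chord diagram (equivalently the pignose diagram) of $\pi$ — in which the arc recording position $i$ lies above the axis exactly when $i$ is a weak excedance and below when it is a deficiency — it reflects every non‑loop arc across the axis, and a reflection of such a diagram preserves which pairs of arcs cross; hence it preserves $\cro$. The local corrections are then designed to turn the fixed‑point discrepancy of $\pi\mapsto\pi^{-1}$ (which otherwise spoils the count of weak excedances) into the required shift of $\neg$ by one, to flip the sign in position $1$, and to make the whole map an involution — all arranged so as to be crossing‑neutral. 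It is exactly here that the passage from $B_{n,k}$ to $B_{n,k}^*$ matters: the extra $\chi(\pi_1>0)$ in the exponent is the degree of freedom that lets these corrections balance, and without it no weight‑preserving map of this kind can exist, since already $B_{1,1}(t,q)=t\ne1=B_{1,2}(t,q)$.

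Once $\Phi$ is defined, the two identities $\fwex(\Phi(\pi))=2n+1-\fwex(\pi)$ and $\neg(\Phi(\pi))+\chi(\Phi(\pi)_1>0)=\neg(\pi)+\chi(\pi_1>0)$ reduce to a routine case analysis, splitting the positions of $\pi$ according to the sign of $\pi_i$ and the comparison of $\pi_i$ with $i$ and bookkeeping the single boundary correction coming from position $1$. The main obstacle is $\cro$: one must pin down $\Phi$ precisely enough — how the relative order inside the weak‑excedance part and inside the deficiency part of $\pi$ is transported, and exactly how the vertex $1$ and the negative fixed points are rerouted — that the chord‑diagram reflection argument genuinely applies, including to crossings along arcs incident to the vertex $1$ whose other endpoint migrates from the positive to the negative side, and then verify that the local corrections neither create nor destroy a crossing. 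With that in place, the symmetry $B_{n,k}^*(t,q)=B_{n,2n+1-k}^*(t,q)$ holds for all $1\le k\le 2n$, and setting $t=1$ yields $B_{n,k}(1,q)=B_{n,2n+1-k}(1,q)$.
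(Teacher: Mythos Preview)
Your proposal is a strategy outline, not a proof: you never actually define the map $\Phi$, and the phrases ``Once $\Phi$ is defined'' and ``one must pin down $\Phi$ precisely enough'' concede this. More seriously, the one concrete claim you make about the backbone $\pi\mapsto\pi^{-1}$ is false: inversion does \emph{not} preserve $\cro$, already for unsigned permutations. Take $\pi=(3,1,2)\in S_3$. Direct inspection gives $\cro(\pi)=0$, whereas $\pi^{-1}=(2,3,1)$ has the crossing $(i,j)=(1,2)$ since $1<2\le\pi^{-1}_1=2<\pi^{-1}_2=3$, so $\cro(\pi^{-1})=1$. The flaw in your reasoning is that inversion is not a reflection of the pignose diagram across the horizontal axis: the arc from the first vertex of pignose $i$ to the second vertex of pignose $j$ becomes the arc from the first vertex of pignose $j$ to the second vertex of pignose $i$, and this swap of first/second vertices within each pignose changes which pairs of arcs cross. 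No ``local correction at vertex $1$ and at negative fixed points'' can rescue this, since the example above has $\pi_1>0$, no negative entries, and no fixed points whatsoever.

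The paper's construction is quite different. One encodes $\pi\in B_n^+$ as a pignose matching $M$ on $[2n+2\neg(\pi)]$ and applies the rotation $\rho$ that moves the leftmost vertex to the far right. The heart of the argument is Lemma~\ref{thm:moving}, showing that $\rho$ preserves the number of crossings of a pignose matching; this in turn relies on Lemma~\ref{lem:divide}, which says a vertical line through any pignose meets exactly one more upper arc than lower arcs. The bijection $\phi:B_n^+\to B_n^-$ then sends $\pi$ to $\sigma^-$ where $\sigma$ corresponds to $(\rho^{(2\neg(\pi)+1)}(M))^{\rev}$; the final negation of the first letter is what supplies the unit shift in $\neg$ that your $\chi(\pi_1>0)$ term is meant to absorb. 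In the example above this produces $\phi((3,1,2))=(-1,2,3)$, which is nowhere near $\pi^{-1}$.
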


Note that $E_{n,k}(q)$ and $E_{n,k}^B(q)$ are generating functions in which $q$
records the statistic $\cro(\pi)$. There is another statistic $\al(\pi)$
counting the number of alignments. Corteel \cite{Corteel2007} showed that the
two statistics $\cro(\pi)$ and $\al(\pi)$ in $S_n$ are closely related:
\begin{prop}\cite{Corteel2007}\label{prop:corteel_intro}
If $\sigma\in S_n$ has $k$ weak excedances, then
\[ \cro(\sigma) + \al(\sigma) = (k-1)(n-k).\]
\end{prop}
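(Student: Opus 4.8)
The plan is to prove the identity by a direct study of the arc diagram of $\sigma$. Place vertices $1,\dots,n$ on a line and, for each $i$, draw the arc $i\to\sigma(i)$ (a loop when $\sigma(i)=i$), above the line when $\sigma(i)\ge i$ and below it when $\sigma(i)<i$; call these the \emph{weak-excedance arcs} and the \emph{deficiency arcs}, so there are $k=\wex(\sigma)$ of the former and $n-k$ of the latter. By definition $\cro(\sigma)+\al(\sigma)$ counts the unordered pairs of arcs that form either a crossing or an alignment.

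The main step is a finite case analysis over pairs of arcs, organized by the two types involved (weak excedance / deficiency) and by the relative order of the at most four endpoints. For each configuration one records whether the pair is a crossing, an alignment, or neither; combining these, one extracts an explicit list of endpoint-order patterns, that is, a description of the family $\mathcal F$ of pairs of arcs counted by $\cro+\al$. The second step is to enumerate $\mathcal F$. I would first try to organize this as a sum over the deficiency arcs: fix a deficiency arc $d=(i\to\sigma(i))$ with $\sigma(i)<i$ and show that it is paired, in $\mathcal F$, with exactly $k-1$ of the remaining arcs, the unique exception being a prescribed arc attached to one endpoint of $d$; summing over the $n-k$ deficiency arcs (and checking that this attribution is consistent — in particular that pairs of two deficiency arcs are handled without double counting) would give $|\mathcal F|=(k-1)(n-k)$.

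The hardest part is this bookkeeping: running the case analysis without gaps — including the degenerate loops at fixed points and the configurations where two arcs share an endpoint — and then pinning down exactly which single arc is exceptional for each deficiency arc, so that the count is $k-1$ on the nose with no off-by-one at the extreme positions $1$ and $n$. An alternative that offloads part of this onto known machinery is to transport the statement through the Steingr\'{i}msson--Williams bijection between permutations in $S_n$ with $k$ weak excedances and permutation tableaux of length $n$ with $k$ rows, under which $\cro(\sigma)$ becomes $\so(T)$: such a tableau lies inside the $k\times(n-k)$ rectangle and has exactly $n-k$ column-top (non-superfluous) ones, hence $(n-k)+\so(T)$ ones in total, so it then suffices to identify $\al(\sigma)$ with the number of cells of the rectangle not carrying a $1$, which equals $k(n-k)-(n-k)-\so(T)=(k-1)(n-k)-\cro(\sigma)$.
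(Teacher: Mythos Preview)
Your first plan does not work as stated. Take $\sigma=231\in S_3$: here $k=2$, the unique deficiency arc is $3\to1$, and the only pair in $\mathcal F$ is the crossing between the two weak-excedance arcs $1\to2$ and $2\to3$. So this deficiency arc is paired in $\mathcal F$ with zero other arcs, not $k-1=1$. The obstacle is structural: crossings and nested alignments between two weak-excedance arcs are pairs in $\mathcal F$ that involve no deficiency arc at all, so no attribution scheme of the kind you describe can account for them without an extra idea.

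Your alternative via permutation tableaux is a valid route and is close in spirit to the original argument in \cite{Corteel2007}, but you have pushed the entire content into the unproved assertion that $\al(\sigma)$ equals the number of cells of the $k\times(n-k)$ rectangle not carrying a $1$. This is true, but establishing it means going through the zigzag bijection and matching each alignment type to either a $0$ inside the shape or a cell outside the shape; that is exactly the ``rather technical calculation'' the present paper sets out to replace.

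The paper's proof is different from both of yours and hinges on Lemma~\ref{lem:divide}: in the pignose diagram, a vertical line through the middle of the $i$th pignose meets exactly one more upper arc than lower arcs. This lets one trade same-type pairs for mixed pairs. Concretely, the $k(n-k)$ upper--lower pairs split as $A_1\uplus A_2\uplus A_3$ by relative position; $A_1$ consists precisely of the mixed alignments, while Lemma~\ref{lem:divide} shows that $|A_2|$ equals the number of upper--upper crossings and alignments, and $|A_3|$ equals the number of lower--lower crossings and alignments plus $n-k$. Hence $\cro(\sigma)+\al(\sigma)=|A_1|+|A_2|+(|A_3|-(n-k))=k(n-k)-(n-k)$. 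The missing ingredient in your first approach is exactly this exchange lemma: it is what makes the upper--upper contributions visible to a count organised around lower arcs.
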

In Section~\ref{cro_ali} we prove the following proposition which is a type $B$
analog of the above result.
\begin{prop}\label{prop:corteelB_intro}
For $\pi\in B_n$ with $\fwex(\pi)=k$, we have
\[ 2\cro(\pi) + \al(\pi) = n^2-2n+k.\]
\end{prop}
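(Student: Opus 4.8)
The plan is to mimic the proof of Proposition~\ref{prop:corteel_intro} from \cite{Corteel2007}, transported to the signed setting via the pignose/diagram model that underlies the definition of $\cro$ and $\al$ on $B_n$. Concretely, one represents $\pi\in B_n$ by its arc diagram on the vertices $-n,\dots,-1,1,\dots,n$ (or equivalently the ``pignose'' picture) in which each $i$ with $\pi(i)$ sends out one arc, and $\cro(\pi)$ and $\al(\pi)$ count, respectively, the pairs of arcs that cross and the pairs that are ``aligned'' (nested or side-by-side in the prescribed way). Each unordered pair of arcs contributes to exactly one of a small list of local configurations; the point is to show that summing these local contributions over all pairs gives a count that depends on $\pi$ only through $n$ and $\fwex(\pi)=k$, namely $2\cro(\pi)+\al(\pi)=n^2-2n+k$.

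First I would fix the diagram conventions and recall exactly which arc pairs are counted by $\cro$ with weight $2$ — these should be the pairs both of whose arcs lie in the ``positive'' part, or are identified under the sign symmetry, so that a single crossing in the folded picture corresponds to two in the unfolded one — and which are counted by $\al$ with weight $1$. Second, I would classify all ordered (or unordered) pairs of the $2n$ arcs into types according to the relative position of their endpoints and whether the arcs are weak-excedance arcs (``top'' arcs, $\pi(i)\ge i$ in the appropriate order) or deficiency arcs (``bottom'' arcs). For unsigned permutations the identity $\cro+\al=(k-1)(n-k)$ comes from observing that exactly the $\binom{k}{2}+\binom{n-k}{2}$ pairs of same-type arcs always give an alignment, the $k(n-k)$ mixed pairs each give exactly one crossing-or-alignment, while the ``diagonal'' pairs sharing an endpoint contribute in a controlled way; rearranging yields $(k-1)(n-k)$. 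Third, I would carry out the analogous bookkeeping in type $B$: now there are $\fwex(\pi)=k$ flag weak-excedance arcs, the remaining $2n-k$ arcs are of the other kind, and there is the extra interaction coming from the ``$0$-vertex'' / diagonal structure of the shifted diagram (equivalently the sign changes $\neg(\pi)$). Counting same-type pairs, mixed pairs, and the self-paired/diagonal contributions, and using that the total number of relevant arc pairs in the folded picture is essentially $\binom{2n}{2}$ minus degenerate ones, should produce $n^2-2n+k$ after simplification.

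The main obstacle I expect is getting the constants exactly right in the type~$B$ count — in particular handling the ``folding'' weight of $2$ on crossings correctly, and correctly accounting for the arcs attached to the diagonal cells (the ones recording $\neg(\pi)$), which have no analog in type~$A$ and which are the source of the asymmetry already visible in $B_{n,k}(t,q)\ne B_{n,2n+1-k}(t,q)$. A clean way to control this is to not count pairs directly but to write $2\cro(\pi)+\al(\pi)$ as a sum over arcs $a$ of (number of arcs interacting with $a$ on its right), show this inner count is an affine function of the global data, and sum; alternatively, induct by inserting the $n$th pignose into a diagram of size $n-1$, tracking exactly how $\cro$, $\al$, and $\fwex$ change under each of the four insertion cases (new arc positive/negative, excedance/deficiency) and checking the identity is preserved. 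Either route reduces the problem to a finite case analysis, so the real work is purely in the combinatorial bookkeeping; once the affine dependence on $k$ and $n$ is isolated, matching the constant term against small cases ($n=1,2$, using $B_1,B_2$ from the introduction) pins down the formula.
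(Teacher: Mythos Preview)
Your overall strategy---work in the ``full'' arc diagram on $\{-n,\dots,-1,1,\dots,n\}$ so that each crossing of $\pi$ appears twice---is the paper's. But the bookkeeping sketch has a real error, and you are missing the one lemma that makes the count fall out without a case analysis.

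The error: you claim that in type $A$ ``the $\binom{k}{2}+\binom{n-k}{2}$ pairs of same-type arcs always give an alignment.'' This is false; two upper arcs (or two lower arcs) can cross. So your proposed count does not rearrange to $(k-1)(n-k)$, and the analogous type $B$ count built on the same premise will not produce $n^2-2n+k$ either. Relatedly, $\fwex(\pi)=k$ is not the number of arcs of one type in the full diagram (there are always exactly $n$ upper and $n$ lower arcs there); it is the number of \emph{positive upper half-arcs}.

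The missing idea is a ``line-cutting'' lemma (Lemma~\ref{lem:divide} in type $A$, Lemma~\ref{lem:divide_B} in type $B$): draw a vertical line through the middle of a pignose; then the numbers of upper and lower arcs crossing it differ by exactly $1$, and in the full diagram the sign of that difference depends on whether the pignose carries a positive or negative label. The paper does not start from all $\binom{2n}{2}$ arc pairs but from the $n\cdot n=n^2$ \emph{mixed} pairs $(U,L)$ of one upper and one lower arc, partitions them into three positional classes $A_1,A_2,A_3$, and uses the line-cutting lemma to trade the pairs in $A_2$ and $A_3$ for pairs of two upper (respectively two lower) arcs, picking up a $\pm1$ correction at each pignose. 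Summing the corrections gives exactly the number of positive lower half-arcs, which is $2n-\fwex(\pi)=2n-k$; hence $2\cro(\pi)+\al(\pi)=n^2-(2n-k)$.

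Your fallback ideas (induction by inserting a pignose, or writing $2\cro+\al$ as a sum over arcs of ``interactions to the right'') could be made to work, but the latter is essentially a repackaging of the line-cutting lemma, and the former replaces one clean identity by a multi-case check. The missing ingredient is not more careful bookkeeping but this structural observation about the full pignose diagram.
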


Besides the zigzag bijection, another important result coming from permutation tableaux is the following:
\begin{thm}(Matrix Ansatz \cite{CJW})
Let $D$ and $E$ be matrices, $\langle W|$ a row vector, and $|V\rangle$ a column vector, such that:
\begin{eqnarray}  \label{ansatz}
DE = qED+D+E, \qquad 
D|V\rangle = |V\rangle, \qquad
\langle W|E = yt\langle W|D.
\end{eqnarray}
Then we have:
\[
B_n(y,t,q)=\langle W|(y^2D+E)^n|V \rangle.
\]
\end{thm}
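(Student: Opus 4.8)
The plan is to establish that $B_n(y,t,q)$ and the matrix product $G_n:=\langle W|(y^2D+E)^n|V\rangle$ satisfy one and the same recursion in $n$, with the same initial value, and hence coincide for every $n$. We normalise by $\langle W|V\rangle=1$; this costs nothing and is in any case forced, since $G_0=\langle W|V\rangle$ must equal $B_0(y,t,q)=1$.

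First I would check that $G_n$ depends only on the relations \eqref{ansatz}, not on the particular $D,E,\langle W|,|V\rangle$ realising them. Repeatedly rewriting $DE\mapsto qED+D+E$ inside any word in the letters $D,E$ strictly decreases the number of inversions (pairs of positions $i<j$ carrying $D$ then $E$), hence terminates, and the resulting normal form $\sum_{i,j\ge0}c_{i,j}(q)E^iD^j$ with $c_{i,j}(q)\in\mathbb{Z}[q]$ is well defined. Then $D|V\rangle=|V\rangle$ gives $E^iD^j|V\rangle=E^i|V\rangle$, while $\langle W|E=yt\langle W|D$ together with the exchange relation expresses $\langle W|DE^{i-1}|V\rangle$ through the $\langle W|E^a|V\rangle$ with $a\le i-1$, so by induction on $i$ every $\langle W|E^i|V\rangle$ is a definite polynomial in $y,t,q$ times $\langle W|V\rangle=1$. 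Thus $\langle W|(aD+bE)^n|V\rangle$ is a fixed element of $\mathbb{Z}[y,t,q,a,b]$, and it suffices to exhibit a recursion obeyed by $B_n$ and to verify that $G_n$ obeys it as well.

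Next I would extract such a recursion from the combinatorics of $\PT_B(n)$. Reading the south-east border of a shifted Ferrers diagram of length $n$ from north-east to south-west, assign the letter $D$ to a step bounding a row of the underlying Ferrers diagram $F$ and the letter $E$ to a step bounding a column of $F$ (the staircase of added rows, and with it the diagonal, being determined once the number of columns is fixed). Deleting the last border step, together with the row or column it bounds, is a map $\PT_B(n)\to\PT_B(n-1)$; read backwards, it tells us how each length-$n$ type $B$ permutation tableau is produced from a length-$(n-1)$ one by inserting a new row or a new column, and one records the resulting change of the three statistics $2\row+\diag$, $\diag$, $\so$. Adjoining a new row of $F$ multiplies the weight by $y^2$ — the letter $y^2D$ — while $D|V\rangle=|V\rangle$ handles the corresponding end of the border reading; adjoining a new column forces at least one $1$ into it and creates exactly one superfluous $1$ for every $1$ lying strictly above the topmost forced one, which is precisely the content of the exchange rule $DE=qED+D+E$ with $q$ counting the superfluous ones created; and the behaviour at the diagonal cells, governed by condition~(3), contributes the factor $yt$ and is encoded by the left relation $\langle W|E=yt\langle W|D$. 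Putting these rules together yields $B_n(y,t,q)=\langle W|(y^2D+E)^n|V\rangle$.

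The main obstacle is exactly this last assembly: one must check, insertion by insertion, that adding a border step to a type $B$ permutation tableau corresponds under \eqref{ansatz} to multiplication by $E$ or $y^2D$ carrying the correct power of $q$ (one per superfluous $1$) and of $t$ (one per $1$ on the diagonal), so that the normal-ordering reductions above reproduce, term by term, the enumeration of $\PT_B(n)$ by $(2\row+\diag,\diag,\so)$. This is the type $B$ refinement of the Matrix Ansatz argument for ordinary permutation tableaux in \cite{CJW}; the genuinely new — and most delicate — point is the interaction of the added rows and the diagonal cells with the left boundary. Should the bookkeeping prove awkward, an alternative is to evaluate $G_n$ as a weighted sum over Motzkin-type lattice paths via the combinatorial theory of continued fractions applied to $D$ and $E$, and to match it against the corresponding lattice-path description of $\PT_B(n)$; the recursive argument is, however, the more self-contained.
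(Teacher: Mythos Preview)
The paper does not prove this theorem; it is quoted from \cite{CJW} as a known result and then \emph{used}, in Section~\ref{MA}, to produce explicit solutions $D,E,\bra{W},\ket{V}$ and their consequences (continued fraction, recurrence). There is therefore no ``paper's own proof'' to compare against.

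That said, your plan is a faithful outline of the argument in \cite{CJW}. A few remarks on what you wrote. Your normal-form step is fine and is the standard way to see that $\langle W|(y^2D+E)^n|V\rangle$ is determined by \eqref{ansatz} alone. Your combinatorial description is also the right one: the word in $\{D,E\}$ recording the south-east border encodes the shape, the rewriting $DE\to qED+D+E$ is exactly the insertion of a column (the power of $q$ counts the superfluous $1$'s produced), $D\ket{V}=\ket{V}$ disposes of the last row, and $\bra{W}E=yt\bra{W}D$ is precisely the diagonal rule in the type~$B$ case. What you flag as the ``main obstacle'' is indeed the whole proof: the bookkeeping that each rewriting step matches a filling choice with the correct weight $y^{2\row+\diag}t^{\diag}q^{\so}$. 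You have described it correctly but not carried it out; as written, your proposal is a plan rather than a proof. If you want to complete it without consulting \cite{CJW}, the cleanest route is not insertion/deletion of the last border step but the direct expansion of $(y^2D+E)^n$ as a sum over words, identifying each word with a shifted shape and then showing that normal-ordering the word term by term enumerates its fillings; the diagonal relation $\bra{W}E=yt\bra{W}D$ then handles the added rows in one stroke.
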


This can be seen as an abstract rule to compute $B_n(y,t,q)$, but it is also
useful to have explicitly $D$, $E$, $\bra{W}$ and $\ket{V}$ satisfying the
relations ({\it i.e.} solutions of the Matrix Ansatz), see \cite{CJW}.  We will
give two such solutions in Section \ref{MA}. The following induction formula
appears in relation with one of the solutions. We use the standard notation
$[n]_q = (1-q^n)/(1-q)$.

\begin{thm} \label{bnrec} We have $B_0=1$, and 
\[
B_{n+1}(y,t,q)=(y+t)D_q[(1+yt)B_n(y,t,q)],
\]
where $D_q$ is the $q$-derivative with respect to $t$, {\it i.e.} the linear operator that sends $t^n$ to $[n]_qt^{n-1}$.
\end{thm}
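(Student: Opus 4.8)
The plan is to read off the recursion from the Matrix Ansatz, using one of the explicit solutions that will be produced in Section~\ref{MA}. Write $C=y^2D+E$, so that $B_n(y,t,q)=\langle W|C^n|V\rangle$ and $B_0=\langle W|V\rangle=1$. The first step is a purely algebraic reduction. From the boundary relation $\langle W|E=yt\,\langle W|D$ we obtain
\[
\langle W|C=y^2\langle W|D+\langle W|E=(y^2+yt)\langle W|D=y(y+t)\langle W|D,
\]
and hence
\[
B_{n+1}(y,t,q)=\langle W|C^{n+1}|V\rangle=y(y+t)\,\langle W|D\,C^n|V\rangle .
\]
Comparing with the statement, it therefore suffices to establish the identity
\[
y\,\langle W|D\,C^n|V\rangle=D_q\big[(1+yt)B_n(y,t,q)\big],
\]
which one verifies by hand for small $n$ (for $n=0$: $y\langle W|D|V\rangle=y\langle W|V\rangle=y=D_q[1+yt]$).

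For this identity I would use the specific shape of the solution in Section~\ref{MA}: there $D$ and $E$ act on a space of polynomials, $|V\rangle$ is the constant $1$ (so that $C^n|V\rangle$ ``is'' $B_n(y,t,q)$), the dependence on $t$ is carried by the covector $\langle W|$ through the factor $yt$ of the Ansatz, and $D$ decomposes as a scalar plus a rescaled copy of the $q$-derivative $D_q$. The single extra $D$ on the left then contributes a scalar multiple of $B_n$ together with a $D_q$-term; pushing that $D_q$ through $\langle W|$ and $|V\rangle$ by the $q$-Leibniz rule $D_q(tf)=f+qt\,D_qf$, and checking that the boundary contributions cancel, turns $y\langle W|D\,C^n|V\rangle$ into $D_q[(1+yt)B_n]$, which is what is needed. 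As a check, $B_1=(y+t)D_q[1+yt]=y^2+yt$, matching the values listed after the definition of $B_n$.

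The step I expect to be the real obstacle is exactly this last identification: it depends on the precise form of $D$, $E$, $\langle W|$, $|V\rangle$ chosen in Section~\ref{MA}, and on verifying that the extra $D$, evaluated against the boundary vectors, genuinely realizes the operator $f\mapsto\frac{1}{y}D_q[(1+yt)f]$ on generating polynomials, with all boundary terms vanishing. If one prefers to avoid the Ansatz, there is a hands-on alternative: rewrite $B_n(y,t,q)=\sum_{T\in\PT_B(n)}y^{2\row(T)}(yt)^{\diag(T)}q^{\so(T)}$, expand $(y+t)D_q[(1+yt)B_n]$ into the four terms coming from the two ways of distributing $(1+yt)$ and the two ways of distributing $(y+t)$, and match each term with a class of tableaux of $\PT_B(n+1)$ obtained from a tableau of $\PT_B(n)$ by adjoining one new border step — a new regular row, or a new column together with its diagonal cell filled by $0$ or by $1$ — where the $q$-integers $[\diag+\varepsilon]_q$ record the superfluous $1$'s forced into a freshly created column. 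In that approach the delicate point is keeping straight the coupling, imposed by the shifted shape, between the number of columns and the number of added rows.
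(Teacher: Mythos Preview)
Your plan is essentially the paper's own argument, but you are circling around the one clean observation that makes the ``obstacle'' disappear. In Solution~2 of Section~\ref{MA} one has $D=X(I+Y)$ and $E=YX(I+Y)$, so
\[
C=y^2D+E=(y^2I+Y)\,X\,(I+Y).
\]
Under the identification $v\mapsto\bra{W}v$ of column vectors with polynomials in $t$ (basis $(yt)^i$), the factor $Y$ is multiplication by $yt$ and $X$ is $y^{-1}D_q$. Hence $C$ acts on polynomials in $t$ as $f\mapsto y(y+t)\cdot y^{-1}D_q\big[(1+yt)f\big]=(y+t)D_q\big[(1+yt)f\big]$, and since $\ket{V}$ corresponds to the constant $1$, the recursion is immediate.

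Your detour through the boundary relation $\bra{W}C=y(y+t)\bra{W}D$ is correct but redundant: it simply reproduces the leftmost factor $(y^2I+Y)$ of the factorization above, after which you still have to recognize $D=X(I+Y)$ as $y^{-1}D_q\circ(1+yt)$. There is nothing to ``push through $\bra{W}$'' and no boundary contribution to cancel: once $\bra{W}$ is read as the identification map, the matrices $X,Y$ \emph{are} the operators $y^{-1}D_q$ and $yt\cdot$, and the identity $y\bra{W}DC^n\ket{V}=D_q[(1+yt)B_n]$ you were aiming for is a tautology. Your description of $D$ as ``a scalar plus a rescaled $D_q$'' is loosely right (indeed $Df=f+y^{-1}(1+qyt)D_qf$ after expanding with the $q$-Leibniz rule), but the three-factor form $(y^2I+Y)X(I+Y)$ is what makes the proof a one-liner.
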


As another consequence of the solutions of the Matrix Ansatz, we will see $B_n(y,t,q)$ as a generating function of 
labeled Motzkin paths or suffixes of labeled Motzkin paths. In particular, we obtain in this way a continued fraction:

\begin{thm} \label{bnfrac}
Let
$\gamma_h  = y^2[h+1]_q + [h]_q + tyq^h([h]_q+[h+1]_q)$ and 
$\lambda_h = y[h]_q^2(y+tq^{h-1})(1+ytq^h)$. Then we have
\[
  \sum_{n\geq0} B_n(y,t,q) z^n = 
  \frac{1}{1-\gamma_0z} \cmo \frac{\lambda_1z^2}{1-\gamma_1z} \cmo \frac{\lambda_2z^2}{1-\gamma_2z} \cmo \cdots .
\]
\end{thm}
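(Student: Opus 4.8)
The plan is to derive the continued fraction from the combinatorial interpretation of $B_n(y,t,q)$ as a generating function over labeled Motzkin paths (or suffixes thereof), obtained as a consequence of the solutions of the Matrix Ansatz. Recall the general principle (Flajolet's theory): if a solution $D, E, \bra W, \ket V$ of a Matrix Ansatz of the form $DE - qED = D+E$ is realized by tridiagonal matrices acting on a space with basis $(e_h)_{h\ge 0}$, then $\bra W (y^2 D + E)^n \ket V$ is the generating function of Motzkin paths of length $n$ whose level-$h$ steps carry the weights read off from the tridiagonal action of $y^2 D + E$; and such weighted-Motzkin-path generating functions have a standard $J$-fraction expansion with coefficients $\gamma_h$ (the diagonal weight, i.e. weight of a level step at height $h$, combined appropriately) and $\lambda_h$ (the product of the up-step weight from height $h-1$ and the down-step weight from height $h$). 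So the first step is to record, from Section~\ref{MA}, the explicit matrix solution: one takes $D$ and $E$ of the usual $q$-deformed form so that $y^2 D + E$ becomes a tridiagonal matrix, compute its entries on row/column $h$, and identify the three relevant families of weights.

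Concretely, I would take the known solution where (up to normalization) $E$ has entries $E_{h,h} = [h]_q$, $E_{h+1,h} = 1$ (or similar), and $D$ has $D_{h,h} = 1 + $ something, $D_{h-1,h}$ giving the up-step contribution, chosen so the boundary vectors $\bra W = (1,0,0,\dots)$, $\ket V = (1,1,1,\dots)^{\mathsf T}$ (or the $\ket V$ dictated by $D\ket V = \ket V$) are compatible. Writing $M = y^2 D + E$, the $(h,h)$ entry of $M$ contributes the level-step weight at height $h$, the $(h-1,h)$ entry the up-step weight, and the $(h+1,h)$ (or $(h,h+1)$, depending on convention) the down-step weight. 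Then $\gamma_h = M_{h,h}$ and $\lambda_h = M_{h-1,h}\cdot M_{h,h-1}$ (again modulo the exact convention of which index increases). After substituting the explicit matrix entries and simplifying with $[h]_q$-arithmetic and the identity $[h+1]_q = 1 + q[h]_q$, one should land exactly on $\gamma_h = y^2[h+1]_q + [h]_q + tyq^h([h]_q + [h+1]_q)$ and $\lambda_h = y[h]_q^2(y + tq^{h-1})(1 + ytq^h)$. The factorized shape of $\lambda_h$, in particular the factors $(y+tq^{h-1})$ and $(1+ytq^h)$, is a strong hint that the matrices are themselves presented in a factored (bidiagonal $\times$ bidiagonal) form, which makes the bookkeeping cleaner: I would look for that factorization in the Section~\ref{MA} solution and exploit it.

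There is a subtlety because $B_n$ is naturally a generating function over \emph{suffixes} of labeled Motzkin paths (paths that may end at any height), reflecting that $\ket V$ is the all-ones-type vector rather than $e_0$; the honest statement to use is that $\bra W M^n \ket V$ with $\ket V$ an eigenvector of $D$ still produces a path model whose ordinary generating function $\sum_n B_n z^n$ has a $J$-fraction expansion, and the boundary vector $\ket V$ only affects the numerator, not the $\gamma_h,\lambda_h$. Since Theorem~\ref{bnfrac} asserts a clean $J$-fraction with numerator $1$, I would check that the chosen solution indeed has $D\ket V = \ket V$ with $\ket V$ such that the combinatorial translation absorbs the endpoint freedom into the continued fraction itself (this is exactly the classical fact that $\sum_n \big(\sum_{\text{Motzkin suffixes of length }n} \mathrm{wt}\big) z^n = \frac{1}{1-\gamma_0 z - \frac{\lambda_1 z^2}{1-\gamma_1 z - \cdots}}$ when down-steps and levels are weighted suitably). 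So the argument is: (i) invoke the explicit Matrix Ansatz solution from Section~\ref{MA}; (ii) read off the tridiagonal weights of $y^2D+E$; (iii) apply Flajolet's continued-fraction theorem for (suffixes of) Motzkin paths; (iv) simplify the weights to the stated $\gamma_h,\lambda_h$.

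\textbf{The main obstacle} I anticipate is purely one of matching conventions and simplification: getting the indexing of the tridiagonal matrix exactly right (which direction raises the height, where the eigenvector $\ket V$ sits, whether the weight $\lambda_h$ pairs $M_{h-1,h}M_{h,h-1}$ or a shifted pair) so that the resulting $\gamma_h$ and $\lambda_h$ come out in precisely the displayed closed form rather than an $h\mapsto h\pm 1$-shifted or differently-normalized variant. In particular, verifying that the contribution of $\ket V$ (the "suffix" nature of the path) produces numerator exactly $1$ and does not perturb $\gamma_0$ requires care. Once the solution of Section~\ref{MA} is in hand in factored form, the algebraic simplification to $\gamma_h = y^2[h+1]_q + [h]_q + tyq^h([h]_q+[h+1]_q)$ and $\lambda_h = y[h]_q^2(y+tq^{h-1})(1+ytq^h)$ is a routine (if slightly tedious) $q$-series manipulation that I would not spell out in full.
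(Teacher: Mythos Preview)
Your approach is essentially the paper's: take the explicit tridiagonal solution of the Matrix Ansatz from Section~\ref{MA}, read off the entries of $y^2D+E$, and invoke Flajolet's $J$-fraction theorem. The paper's entire proof is the one-line check that $\gamma_h=y^2D_{h,h}+E_{h,h}$ and $\lambda_h=(y^2D_{h,h+1}+E_{h,h+1})E_{h+1,h}$ for the explicit entries of Solution~1.

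Your ``main obstacle'' is however a phantom. You worry that $\ket V$ is an all-ones-type vector and that $B_n$ is a generating function over \emph{suffixes} of Motzkin paths, which would threaten the clean numerator $1$. That is the situation in Solution~2 (where $\bra W=(1,yt,(yt)^2,\dots)$), but the continued fraction of Theorem~\ref{bnfrac} is derived from Solution~1, in which \emph{both} $\bra W$ and $\ket V$ equal $(1,0,0,\dots)$. Thus $\bra W(y^2D+E)^n\ket V$ counts genuine Motzkin \emph{paths} from height $0$ to height $0$, and the $J$-fraction with numerator $1$ is immediate from the standard theory with no endpoint subtlety at all. So drop the suffix discussion, use Solution~1, and the verification reduces to the routine $q$-arithmetic you already anticipated.
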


We use the notation $\frac {a_1}{b_1} \cmo \frac {a_2}{b_2} \cmo \dots =  (a_1/(b_1-(a_2/(b_2- \dots )))) $
Note that this kind of continued fraction (called J-fraction) are related with moments of orthogonal polynomials \cite{Vie}.

We will show in Section~\ref{paths} that the two kinds of paths are in bijection
with signed permutations, using variants of classical bijections of Fran\c{c}on and
Viennot \cite{FV}, Foata and Zeilberger \cite{FZ}. We obtain two other interpretations
of $B_n(y,t,q)$ where $y$ follows a descent statistic and $q$ a pattern
statistic. Once again, this is a type $B$ analog of results on 
permutation tableaux and permutations \cite{CN,Steingrimsson2007}. The results are the following
(see Section~\ref{paths} for the definitions):
\begin{thm} \label{bndes} For $n\geq 1$, we have:
\begin{equation*}
 B_n(y,t,q) = \sum_{\pi \in B_n}  y^{\hasc(\pi)}    t^{\neg(\pi)} q^{ \pat (\pi) }
= \sum_{\pi \in B_n}  y^{\fdes(\pi) + 1 }    t^{\fneg(\pi)} q^{ \mot^+ (\pi) }.
\end{equation*}
\end{thm}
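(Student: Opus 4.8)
The plan is to establish Theorem~\ref{bndes} by pushing the labeled‐path technology through two distinct bijections, one for each equality. The first equality, $B_n(y,t,q)=\sum_{\pi\in B_n} y^{\hasc(\pi)}t^{\neg(\pi)}q^{\pat(\pi)}$, I would prove via a type~$B$ analog of the Françon–Viennot bijection \cite{FV}. Concretely, starting from one of the solutions of the Matrix Ansatz in Section~\ref{MA}, I would realize $B_n(y,t,q)$ as a generating function over labeled Motzkin paths (respectively suffixes of such paths) with the weights $\gamma_h$, $\lambda_h$ of Theorem~\ref{bnfrac} split into contributions of individual steps and labels; this is exactly what Section~\ref{paths} is set up to do. Then I would describe a map sending a signed permutation $\pi\in B_n$ to such a labeled path, reading off $\pi$ from left to right (or via its ``pignose diagram''), so that each position of $\pi$ produces one step: the height of the step encodes the local type (weak excedance / deficiency pattern), the label encodes the number of earlier positions that are ``available'' to cross, and a half‐step or extra decoration records whether $\pi_i$ is negative. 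One checks that under this map $\hasc$ (the relevant descent‐type statistic) matches $y^{\text{(number of certain steps)}}$, $\neg$ matches the number of negative‐decoration steps, and the pattern statistic $\pat$ matches the product of the label weights, which are the $q$‐powers. That the map is a bijection follows, as usual, by giving the inverse: the labels uniquely reconstruct, position by position, where each value is inserted.

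For the second equality, $\sum_{\pi\in B_n} y^{\hasc(\pi)}t^{\neg(\pi)}q^{\pat(\pi)} = \sum_{\pi\in B_n} y^{\fdes(\pi)+1}t^{\fneg(\pi)}q^{\mot^+(\pi)}$, I would produce a bijection $B_n\to B_n$ — a type~$B$ analog of the Foata–Zeilberger correspondence \cite{FZ} composed with the above, or equivalently a direct statistic‐preserving involution — that transports $(\hasc,\neg,\pat)$ to $(\fdes+1,\fneg,\mot^+)$. The cleanest route is: both triples are shown in Section~\ref{paths} to be equidistributed with the \emph{same} labeled‐path model (this is the point of having \emph{two} classical bijections, Françon–Viennot and Foata–Zeilberger, landing on the same paths), so the composite bijection is immediate once each of the two one‐sided correspondences is verified. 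I would therefore set up the second bijection independently from paths to $B_n$, track how an ascent bottom/excedance in the path becomes a flag descent of $\pi$ and how the label statistic becomes the occurrence count of the pattern $31\text-2^+$ (the $\mot^+$ statistic, a signed refinement of the pattern $31\text-2$), and check $\neg$ versus $\fneg$ on negative letters. Equidistribution of the two images then follows by transitivity through the common path model.

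The main obstacle I expect is bookkeeping the \emph{signed} refinements correctly: in type~$A$ the Françon–Viennot and Foata–Zeilberger bijections are classical, but here every letter carries a sign, every ``crossing''/``alignment''/pattern count has to be split according to whether the letters involved are positive or negative, and the two descent notions ($\hasc$ versus $\fdes+1$) and two negativity notions ($\neg$ versus $\fneg$) differ by subtle boundary terms like $\chi(\pi_1>0)$ that already appeared in the definition of $B_{n,k}^*$. Getting the label alphabet at each height right — so that the per‐step label weights genuinely multiply to $q^{\pat}$ on one side and $q^{\mot^+}$ on the other, with the $[h]_q$, $[h+1]_q$ factors of $\lambda_h,\gamma_h$ distributed consistently — is where the real care is needed; the ``half‐arc'' and ``pignose'' pictures set up earlier in the paper are the tool I would lean on to keep the signed local configurations straight. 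Once the step‐by‐step dictionary is nailed down, both claimed identities reduce to the continued fraction of Theorem~\ref{bnfrac} via Flajolet's theory \cite{Vie}, and the two permutation models are then equidistributed because they are two encodings of the same weighted paths.
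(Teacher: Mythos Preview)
Your overall strategy—realize $B_n(y,t,q)$ as a generating function for labeled Motzkin-type paths via the Matrix Ansatz, then biject these paths with $B_n$ so that the step weights record the desired statistics—is exactly the paper's approach. However, two specific points in your plan misidentify the structure and would lead you astray.

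First, you attribute the second equality to a Foata--Zeilberger-type bijection. But $\fdes$ is a descent statistic and $\mot^+$ is a pattern statistic, so the relevant bijection is again Fran\c{c}on--Viennot, not Foata--Zeilberger (which produces excedance/crossing statistics). In the paper, \emph{both} equalities of Theorem~\ref{bndes} come from variants of Fran\c{c}on--Viennot; the two Foata--Zeilberger variants are used only to recover the already-known interpretation~\eqref{bnfwexcro} in terms of $(\fwex,\neg,\cro)$.

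Second, you assume the two triples $(\hasc,\neg,\pat)$ and $(\fdes+1,\fneg,\mot^+)$ are linked through the \emph{same} labeled-path model. They are not. The first equality goes through the set $\mathcal{M}_n$ of labeled Motzkin \emph{paths} arising from the first solution of the Matrix Ansatz; here one scans $|\pi|$ and the sign of each $\pi_i$ doubles the four classical step types into eight. The second equality goes through the set $\mathcal{N}_n$ of labeled Motzkin \emph{suffixes} arising from the second solution: one views $\pi\in B_n$ as a centrally symmetric permutation of $[\pm n]$, applies the classical Fran\c{c}on--Viennot bijection to obtain a vertically symmetric path of length $2n$, and keeps only the right half (the initial height then records $\fneg(\pi)$ via the factor $(yt)^{\sh(p)}$). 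The two equalities are connected not by a direct bijection $\mathcal{M}_n\leftrightarrow\mathcal{N}_n$ but by the fact that each path model independently computes $B_n(y,t,q)$ through its own solution of~\eqref{ansatz}. Your bookkeeping concerns about signed refinements and boundary terms are well placed, but the $\chi(\pi_1>0)$ correction you mention plays no role here---that belongs to $B_{n,k}^*$ and Theorem~\ref{bnsym}, not to Theorem~\ref{bndes}.
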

This could also be derived directly from the permutation tableaux using
the techniques developed in \cite{ABN2010}.

At last but not least we give an enumeration formula for $B_n(y,1,q)$, obtained from the continued fraction:
\begin{thm}\label{bnformula}
For a nonnegative integer $n$, we have
\[
B_n(y,1,q)=\frac{1}{(1-q)^n}\sum_{k=0}^n\left(\sum_{i=0}^{2n-2k}y^i
\tbinom n{k+\lceil i/2\rceil} \tbinom n{\lfloor i/2\rfloor}\right)
\left(\sum_{j=0}^{2k}y^{j+1}q^{(j+1)(2k-j)/2}\right).
\]
\end{thm}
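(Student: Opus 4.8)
The plan is to extract $B_n(y,1,q)$ from the $J$-fraction of Theorem~\ref{bnfrac}. Setting $t=1$, its coefficients collapse to $\gamma_h=y[h+1]_q(y+q^h)+[h]_q(1+yq^h)$ and $\lambda_h=y[h]_q^2(y+q^{h-1})(1+yq^h)$, and these are exactly the contraction of the Stieltjes continued fraction whose successive partial numerators are $c_1z,c_2z,\dots$ with $c_{2h-1}=y[h]_q(y+q^{h-1})$ and $c_{2h}=[h]_q(1+yq^h)$, since $\gamma_0=c_1$, $\gamma_h=c_{2h}+c_{2h+1}$ and $\lambda_h=c_{2h-1}c_{2h}$. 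By the combinatorial theory of continued fractions \cite{Vie}, $B_n(y,1,q)$ is then the generating polynomial of Dyck paths of semilength $n$ in which a down step from height $h$ to $h-1$ is weighted by $c_h$. Since $c_h=[\lceil h/2\rceil]_q\,L_h$ with $L_h=y\,(y+q^{(h-1)/2})$ for odd $h$ and $L_h=1+yq^{h/2}$ for even $h$, and a Dyck path of semilength $n$ has exactly $n$ down steps, this already produces the prefactor $(1-q)^{-n}$; using $[m]_q=(1-q^m)/(1-q)$, it remains to evaluate
\[
(1-q)^nB_n(y,1,q)=\sum_{D}\ \prod\bigl(1-q^{\lceil h/2\rceil}\bigr)L_h,
\]
where the inner product ranges over the down steps $h\to h-1$ of the Dyck path $D$.

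Next I would run an inclusion--exclusion combined with a sign-reversing involution. Expanding every factor $1-q^{\lceil h/2\rceil}$ and every binomial $L_h$ into monomials decorates each down step with two binary choices and rewrites the sum as an alternating sum over decorated Dyck paths; I would then cancel terms via an involution that toggles the lowest ``movable'' decorated down step against a matching elementary surgery on the path, mirroring the argument used for the type~$A$ $q$-Eulerian closed form \cite{J}. The configurations that survive should be precisely those whose decorated down steps occupy a full initial range of heights, equivalently those whose path breaks, at a chosen even height $2k$, into a portion lying weakly below height $2k$ together with a single ``staircase'' excursion reaching height $2k$.

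Finally I would evaluate the two pieces of a surviving configuration separately. After the bookkeeping of the $y$-powers coming from its $L_h$-monomials, the portion weakly below height $2k$ contributes the product of two binomial coefficients in the statement, summed over $i$ --- a Lindström--Gessel--Viennot (reflection) count of the pair of nested lattice paths formed by its up and down steps --- while the staircase excursion contributes a $q$-power which, summed over its admissible widths $j$, collapses to the theta-like polynomial $\sum_j y^{j+1}q^{(j+1)(2k-j)/2}$, the exponent being the triangular area of a staircase inscribed under a peak of height $2k$. Multiplying the two, summing over $k$, and restoring the factor $(1-q)^{-n}$ would give the claimed identity.

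The step I expect to be the main obstacle is the second one together with the factorisation in the third: designing the involution so that exactly the staircase configurations survive, and then proving that the surviving weight genuinely splits as (binomial base)\,$\times$\,($q$-staircase apex). In particular one must check that the base count degenerates to a product of two binomials rather than a true determinant --- which should happen because the start and end intervals of the two nested paths are themselves nested --- and that the apex sum telescopes to $q^{(j+1)(2k-j)/2}$, all the while matching the $y$-exponents $y^i$ from the base and $y^{j+1}$ from the apex. A less self-contained alternative would be to recognise the three-term recurrence with coefficients $\gamma_h,\lambda_h$ as a specialisation of a classical family of $q$-orthogonal polynomials and to invoke a known formula for its moments.
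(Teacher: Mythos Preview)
Your opening computation is correct: the $J$-fraction at $t=1$ is indeed the even contraction of the $S$-fraction with $c_{2h-1}=y[h]_q(y+q^{h-1})$ and $c_{2h}=[h]_q(1+yq^h)$, so $B_n(y,1,q)$ is a weighted Dyck-path generating function and the factor $(1-q)^{-n}$ falls out as you say. From that point on, however, what you have written is a programme rather than a proof, and the route is \emph{not} the one the paper takes.

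The paper's argument is purely continued-fraction-theoretic. It does not use Dyck paths or a sign-reversing involution. Instead it proves a general substitution lemma: if $\sum b_nz^n$ has a $J$-fraction with coefficients $\gamma_h,\lambda_h$, and $\sum c_nz^n$ has the $J$-fraction obtained by replacing each $1-\gamma_hz$ by $(1+z)(1+y^2z)-\gamma_hz$, then $b_n=\sum_k\bigl(\sum_j y^{2j}(\tbinom{n}{j}\tbinom{n}{j+k}-\tbinom{n}{j-1}\tbinom{n}{j+k+1})\bigr)c_k$; this is Lagrange inversion on the compositional inverse of $z/((1+z)(1+y^2z))$. Applying this to $b_n=(1-q)^nB_n(y,1,q)$ reduces the problem to evaluating the shifted fraction. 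That fraction is then identified---after multiplying by $1-yz$ and a Schr\"oder-path contraction argument---with a $T$-fraction whose expansion $\sum_{j\geq0}(-z)^j\sum_{i=0}^{2j}y^iq^{i(2j-i+1)/2}$ is quoted from \cite{JK}. A final telescoping collapses the resulting double sum to the displayed formula. So the paper's ``decomposition'' is algebraic (a change of variable in the $J$-fraction) and its key external input is a Touchard--Riordan-type identity, whereas yours is combinatorial (an involution on decorated Dyck paths) in the style of \cite{J}.

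What each approach buys: the paper's method is short and rigorous once the two lemmas are in hand, but it is not self-contained (it imports \cite{JK}). Your plan would be self-contained and bijective, and would in fact address the paper's Open~Problem~6, which asks precisely whether the combinatorial method of \cite{J,kimstanton} for $B_n(y,0,q)$ extends to $B_n(y,1,q)$. But as you yourself note, the heart of your argument---designing the involution so that only ``staircase'' configurations survive, and checking that their weight factors as (product of two binomials)$\times$($q$-triangular apex)---is not carried out. In the type~$A$ case the surviving objects have a clean description because the $L_h$ there are simpler; here the two binomial families $y(y+q^{\bullet})$ and $1+yq^{\bullet}$ alternate with parity, and you would need to show that the parity interacts correctly with the involution to produce the $\lceil i/2\rceil$, $\lfloor i/2\rfloor$ split in the binomials. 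Until that is done, the proposal remains a plausible outline rather than a proof.
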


This will be given in Section \ref{EF}, and the proof uses techniques developed
in \cite{JK}.  We conclude in Section \ref{conc} by a list of open problems.


\section{\texorpdfstring{Statistics of permutation tableaux of type $B$ and
    signed permutations}{Statistics of permutation tableaux of type B and signed permutations}}
\label{bij}

\subsection{The zigzag bijection}

Steingr\'imsson and Williams \cite{Steingrimsson2007} defined a bijection
$\Phi:\PT(n)\to S_n$ using zigzag paths.  For given $T\in\PT(n)$, the
corresponding permutation $\Phi(T)=\pi=\pi_1\cdots\pi_n\in S_n$ is obtained as
follows. Here $S_n$ denotes the set of permutations of
$[n]=\{1,2,\dots,n\}$. For each $i\in[n]$, we travel from the $i$th step of the
Ferrers diagram.  If the $i$th step is horizontal (resp.~vertical), then travel
to the topmost (resp.~leftmost) $1$ in the column (resp.~row) containing the
$i$th step, and then travel to the east (resp.~south) changing the direction to
south or east whenever we meet a $1$. Then $\pi_i$ is defined to be the label of
the step at which we finally arrive. If there is no $1$ in the row containing
the $i$th step, then $\pi_i=i$. For example, if $T$ is the permutation tableau
in \eqref{eq:pt}, then $\Phi(T)=7,1,6,5,3,4,2,8$.

The bijection $\Phi$ is naturally extended to the bijection $\Phi_B:\PT_B(n)\to
B_n$, where we denote by $B_n$ the set of signed permutations on $[n]$.  A
\emph{signed permutation} on $[n]$ is a permutation of $[n]$ where each integer
may be negated. For example, $\pi=4,2,-5,6,-1,3$ is a signed permutation of
$[6]$.  For $T\in\PT_B(n)$, we construct $\pi\in B_n$ in a similar way using
zigzag paths except the following. If the column containing the $i$th step has a
$1$ in the diagonal, then we travel from the $i$th step to the $1$ in the
diagonal, travel to the leftmost $1$ in the row containing the $1$ and then
travel to the south changing the direction to south or east whenever we meet a
$1$. If we arrive at the $j$th step then $\pi_i=-j$. For example, if $T$ is the
type $B$ permutation tableau in Figure~\ref{fig:ptb}, we have
$\Phi_B(T)=-3,6,2,5,-4,1,7$.

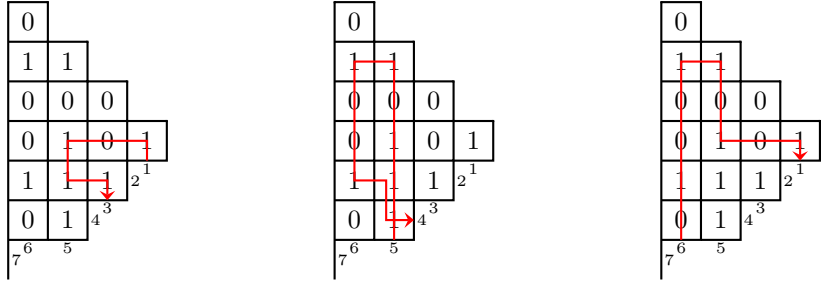
\begin{figure}
  \begin{pspicture}(0,0)(4,-7)
    \multido{\n=0+1}{4}{\rput(\n,-\n){\psline{C-C}(1,0)}} \psline(0,0)(0,-7)
    \cell(1,1)[0] \cell(2,1)[1] \cell(2,2)[1] \cell(3,1)[0] \cell(3,2)[0]
    \cell(3,3)[0] \cell(4,1)[0] \cell(4,2)[1] \cell(4,3)[0] \cell(4,4)[1]
    \cell(5,1)[1] \cell(5,2)[1] \cell(5,3)[1] \cell(6,1)[0] \cell(6,2)[1]
    \tiny
    \rput(0,-4){
      \tiny    \rput(3.5,-.2){1}
      \rput(2.5,-1.2){3}
      \rput(1.5,-2.2){5}
      \rput(0.5,-2.2){6}
      \rput(0.2,-2.5){7}
      \rput(2.2,-1.5){4}
      \rput(3.2,-0.5){2}
    }
  \psline[linecolor=red,arrowsize=.4,
  arrowlength=.6]{->}(3.5,-4)(3.5,-3.5)(1.5,-3.5)(1.5,-4.5)(2.5,-4.5)(2.5,-5)
\end{pspicture}
\hspace{2cm}
  \begin{pspicture}(0,0)(4,-7)
    \multido{\n=0+1}{4}{\rput(\n,-\n){\psline{C-C}(1,0)}} \psline(0,0)(0,-7)
    \cell(1,1)[0] \cell(2,1)[1] \cell(2,2)[1] \cell(3,1)[0] \cell(3,2)[0]
    \cell(3,3)[0] \cell(4,1)[0] \cell(4,2)[1] \cell(4,3)[0] \cell(4,4)[1]
    \cell(5,1)[1] \cell(5,2)[1] \cell(5,3)[1] \cell(6,1)[0] \cell(6,2)[1]
    \tiny
    \rput(0,-4){
      \tiny    \rput(3.5,-.2){1}
      \rput(2.5,-1.2){3}
      \rput(1.5,-2.2){5}
      \rput(0.5,-2.2){6}
      \rput(0.2,-2.5){7}
      \rput(2.2,-1.5){4}
      \rput(3.2,-0.5){2}
    }
  \psline[linecolor=red,arrowsize=.4,
  arrowlength=.6]{->}(1.5,-6)(1.5,-1.5)(0.5,-1.5)(0.5,-4.5)(1.3,-4.5)(1.3,-5.5)(2,-5.5)
\end{pspicture}
\hspace{2cm}
  \begin{pspicture}(0,0)(4,-7)
    \multido{\n=0+1}{4}{\rput(\n,-\n){\psline{C-C}(1,0)}} \psline(0,0)(0,-7)
    \cell(1,1)[0] \cell(2,1)[1] \cell(2,2)[1] \cell(3,1)[0] \cell(3,2)[0]
    \cell(3,3)[0] \cell(4,1)[0] \cell(4,2)[1] \cell(4,3)[0] \cell(4,4)[1]
    \cell(5,1)[1] \cell(5,2)[1] \cell(5,3)[1] \cell(6,1)[0] \cell(6,2)[1]
    \tiny
    \rput(0,-4){
      \tiny    \rput(3.5,-.2){1}
      \rput(2.5,-1.2){3}
      \rput(1.5,-2.2){5}
      \rput(0.5,-2.2){6}
      \rput(0.2,-2.5){7}
      \rput(2.2,-1.5){4}
      \rput(3.2,-0.5){2}
    }
  \psline[linecolor=red,arrowsize=.4,
  arrowlength=.6]{->}(0.5,-6)(0.5,-1.5)(1.5,-1.5)(1.5,-3.5)(3.5,-3.5)(3.5,-4)
\end{pspicture}
\caption{An illustration of the zigzag bijection $\Phi_B$.}
\label{fig:ptb}
\end{figure}

Let $\pi=\pi_1\cdots\pi_n\in B_n$. An integer $i\in[n]$ is called a \emph{weak
  excedance} if $\pi_i\geq i$, and an \emph{excedance} if $\pi_i > i$. An
integer $i\in[n-1]$ is called a \emph{descent} of $\pi$ if
$\pi_i>\pi_{i+1}$. There are various statistics on type $B$ permutations
\cite{adin,bjorner,foata}:
\begin{align*}
\wex(\pi) &= \#\{i\in[n]: \pi_i\geq i\},\\
\exc(\pi) &= \#\{i\in[n]: \pi_i > i\},\\
\des(\pi) &= \#\{i\in[n-1] : \pi_i >\pi_{i+1}\},\\
\des_B(\pi) &= \#\{i\in[0,n-1] : \pi_i >\pi_{i+1}\},\quad 
\mbox{where $\pi_0=0$},\\      
\neg(\pi) &= \#\{i\in[n]: \pi_i<0\},\\
\fwex(\pi) &= 2\wex(\pi)+\neg(\pi),\\
\fexc(\pi) &= 2\exc(\pi)+\neg(\pi),\\
\fdes(\pi) &= \des(\pi) + \des_B(\pi).
\end{align*}

There is another statistic $\cro(\pi)$ that was first defined in \cite{CJW}.
\begin{defn} \cite{CJW} A \emph{crossing} of a signed permutation
  $\pi=\pi_1\cdots\pi_n$ is a pair $(i,j)$ with $i,j>0$ such that
\begin{itemize}
\item $i<j\le \pi_i<\pi_j$ or
\item $-i<j\le -\pi_i<\pi_j$ or
\item $i>j>\pi_i>\pi_j$.
\end{itemize}
We denote by $\cro(\pi)$ the number of crossings of $\pi$. 
\end{defn}

Steingr\'imsson and Williams \cite{Steingrimsson2007} showed that if $\pi =
\Phi(T)$, then $\so(T) = \cro(\pi)$ and $\wex(\pi)=\row(T)$. Corteel et
al.~\cite{CJW} found the following type $B$ analog of this result. 

 \begin{prop}\label{prop:zigzag}\cite[Theorem~4]{CJW}
   The zigzag map $\Phi_B:\PT_B(n)\to B_n$ is a bijection. Moreover, if $\pi =
   \Phi_B(T)$, then $\wex(\pi)=\row(T)$, $\neg(\pi)=\diag(T)$, and
   $\cro(\pi)=\so(T)$.
 \end{prop}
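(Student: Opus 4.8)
The plan is to establish the three statistics $\wex(\pi)=\row(T)$, $\neg(\pi)=\diag(T)$, and $\cro(\pi)=\so(T)$ separately, after first verifying that $\Phi_B$ is a well-defined bijection. The bijectivity is the easiest part: one checks that each zigzag path is uniquely determined, so $\Phi_B$ is well-defined, and then constructs an explicit inverse by reading off, for each value $\pi_i$, whether it is $\geq i$ (a horizontal $i$th step), $<i$ with $\pi_i>0$ (a vertical $i$th step), or negative (a vertical $i$th step whose path passes through a diagonal cell), and then reconstructing the filling from the combinatorics of where paths cross. Since the unsigned version $\Phi:\PT(n)\to S_n$ is already known to be a bijection from \cite{Steingrimsson2007}, it suffices to handle how the diagonal cells modify the picture.

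Next I would treat $\neg(\pi)=\diag(T)$. By construction, $\pi_i<0$ happens exactly when the zigzag path started from the $i$th step passes through a $1$ in a diagonal cell of the added rows; and conversely every $1$ on the diagonal is entered by exactly one such path (namely the one coming down the column through that diagonal cell's column, from the corresponding horizontal step above). So negative values of $\pi$ are in bijection with diagonal $1$'s, giving $\neg(\pi)=\diag(T)$. For $\wex(\pi)=\row(T)$: a path starting from a horizontal $i$th step either stays in the original Ferrers part (arriving at some step $j$ with $j<i$, producing a weak excedance at $j$ — here one must be careful about the direction of the inequality, exactly as in the type $A$ argument) or is deflected through the diagonal; and a path from a vertical step arriving without touching the diagonal produces $\pi_i\geq i$. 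The cleanest route is to mimic the type $A$ bijective bookkeeping of \cite{Steingrimsson2007}: each row of $T$ (non-added) contributes exactly one weak excedance, and each negative value is already accounted for in $\neg$, so one shows the non-weak-excedances correspond to columns, and $\row(T)+\text{(number of columns)}=n$ matches $\wex(\pi)+(n-\wex(\pi))$.

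The main obstacle is $\cro(\pi)=\so(T)$. A superfluous $1$ in $T$ is a $1$ with another $1$ above it in the same column; such a cell is where two zigzag paths cross — one coming along the row (having entered from the left) and one coming down the column. The task is to match each superfluous $1$ with a crossing pair $(i,j)$ as defined in the statement, and here the three cases in the definition of $\cro$ correspond precisely to the three possible local configurations: both paths turn at the cell with both relevant steps horizontal (giving $i<j\le\pi_i<\pi_j$), the column-path having originated at a diagonal cell (giving the case $-i<j\le -\pi_i<\pi_j$), and the row-path having been deflected downward past the cell (giving $i>j>\pi_i>\pi_j$). This is essentially \cite[Theorem~4]{CJW}, so in the write-up I would either cite it directly for this clause or, if a self-contained argument is wanted, carefully track which pair of indices $(i,j)$ labels the two paths through each superfluous $1$ and verify the inequalities case by case, the delicate points being the sign conventions and the strict-versus-weak inequalities at the meeting cell. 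Since the statement is quoted from \cite{CJW} as Proposition~\ref{prop:zigzag}, the honest proof here is simply to invoke that reference.
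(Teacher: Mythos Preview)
Your approach matches the paper's: the proposition is stated with the citation \cite[Theorem~4]{CJW} and is not re-proved here. The paper's only addition is a one-line remark that \cite{CJW} explicitly establishes $\neg(\pi)=\diag(T)$ and $\cro(\pi)=\so(T)$, while $\wex(\pi)=\row(T)$ ``can easily be seen from the definition of $\Phi_B$''---exactly the division of labor you arrive at in your final sentence.

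One small caution about your sketch for $\wex(\pi)=\row(T)$: the bookkeeping you give is slightly tangled (you speak of a weak excedance ``at $j$'' when the relevant index is $i$, and the horizontal/vertical cases are partly reversed). The clean statement is simply that if the $i$th step is vertical (i.e.\ $i$ labels a row of the underlying Ferrers diagram), then the zigzag from $i$ necessarily terminates at a step $j\geq i$, so $\pi_i\geq i$; and if the $i$th step is horizontal (a column), the zigzag either ends at some $j<i$ or passes through a diagonal $1$ and yields $\pi_i<0$---in either case not a weak excedance. Hence weak excedances correspond exactly to rows. Since you ultimately defer to the citation anyway, this does not affect the correctness of your proposal.
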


 \begin{remark}
 In \cite[Theorem~4]{CJW} Corteel et al.~only show
 $\neg(\pi)=\diag(T)$ and $\cro(\pi)=\so(T)$. However, one can easily see from
 the definition of $\Phi_B$ that we also have $\wex(\pi)=\row(T)$.
 \end{remark}

\subsection{Equidistribution of some statistics in $B_n$}

We now find a relation between $\fwex$, $\fexc$, and $\fdes$. For permutations,
it is well known \cite[1.4.3~Proposition]{EC1} that 
\begin{equation}
  \label{eq:10}
\#\{\pi\in S_n: \wex(\pi)=k\} = \#\{\pi\in S_n: \des(\pi)=k-1\}
= \#\{\pi\in S_n: \exc(\pi)=k-1\}.
\end{equation}

We will use the following result of Foata and Han.
\begin{lem}\cite[Section~9]{foata}\label{lem:FH}
There is a bijection $\psi:B_n\to B_n$ such that $\fexc(\pi) =
\fdes(\psi(\pi))$.   
\end{lem}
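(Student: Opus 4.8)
The goal is a bijection $\psi\colon B_n\to B_n$ with $\fexc(\pi)=\fdes(\psi(\pi))$. Recall that $\fexc(\pi)=2\exc(\pi)+\neg(\pi)$ and, with the convention $\pi_0=0$, that $\fdes(\pi)=\des(\pi)+\des_B(\pi)=2\des(\pi)+\chi(\pi_1<0)$. So $\psi$ must essentially turn the excedances of $\pi$ into descents of $\psi(\pi)$, while the parity datum $\neg(\pi)\bmod 2$ gets absorbed into the sign of the first letter of $\psi(\pi)$. The natural framework is a type $B$ version of a Foata-type cycle-to-word transformation, since already on $S_n$ such a transformation realizes the equidistribution of $\exc$ and $\des$ by passing through the cycle decomposition.

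The plan: first put $\pi$ in signed-cycle form. Regarding $\pi$ as a bijection of $\{\pm1,\dots,\pm n\}$ commuting with negation, its orbits split into antisymmetric pairs $\{O,-O\}$ (``positive cycles'') and self-paired orbits $O=-O$ (``negative cycles''). Next, linearize: write each cycle starting from the element of largest absolute value in its orbit under a fixed sign convention, list the cycles in decreasing order of those leading absolute values, and erase the cycle delimiters. Since the $n$ absolute values each occur exactly once, the resulting signed word $w=w_1\cdots w_n$ is itself a signed permutation, which we take to be $\psi(\pi)$. For invertibility, the blocks (cycles) are recovered as the left-to-right maxima of $(|w_1|,\dots,|w_n|)$, and the type (positive versus negative) of each block is read off from the sign/parity conventions built into the linearization step; from the blocks one reconstructs $\pi$.

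The heart of the matter is the statistic transfer $\fdes(\psi(\pi))=\fexc(\pi)$. The mechanism, as in the classical case, is that within a single block the letters at which $w$ descends are exactly the positive elements $x$ of that orbit with $\pi(x)>x$, so a positive cycle contributes twice its number of excedances to both sides (it is linearized together with its negated partner), while a negative cycle is the subtle case: traversing it involves an odd number of sign changes, hence it contributes an odd amount to $\neg(\pi)$, and its block must be arranged so that this oddness is reflected correctly in $\des(w)$ and ultimately in $\chi(w_1<0)$. Choosing the sign conventions and the block ordering so that all of this is simultaneously consistent — so that $\psi$ is genuinely a bijection \emph{and} the identity holds on the nose — is exactly the delicate point; I would not reinvent it but follow the normalization of Foata and Han in \cite[Section~9]{foata}. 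I expect this consistency bookkeeping for negative cycles to be the main obstacle, since a naive convention fails precisely when there are several negative cycles.

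Finally, for the way Lemma~\ref{lem:FH} is used in this paper one only needs the equidistribution $\#\{\pi\in B_n:\fexc(\pi)=k\}=\#\{\pi\in B_n:\fdes(\pi)=k\}$, and this has a softer proof: insert $\pm(n{+}1)$ into each $\pi\in B_n$ and track how $\fexc$ changes on one side and $\fdes$ on the other, showing that both polynomials $\sum_{\pi\in B_n}y^{\fexc(\pi)}$ and $\sum_{\pi\in B_n}y^{\fdes(\pi)}$ satisfy the same recursion in $n$ (that of the type $B$ Eulerian polynomials) with the same value at $n=0$; a bijection $\psi$ then exists formally. Either route suffices, but the first delivers the explicit map asserted in the lemma.
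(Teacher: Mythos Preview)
The paper does not prove this lemma at all: it is stated with the citation \cite[Section~9]{foata} and used as a black box. So there is no ``paper's own proof'' to compare against; your write-up goes well beyond what the paper itself provides.

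Your outline of the type~$B$ fundamental transformation (signed cycle decomposition, a canonical linearization, left-to-right maxima to invert) is the right shape for the Foata--Han bijection, and your identity $\fdes(\pi)=2\des(\pi)+\chi(\pi_1<0)$ is correct. You are also right that the delicate point is the normalization for negative cycles, and you explicitly defer to \cite{foata} for it; that is exactly what the paper does too, only without the surrounding commentary. Your alternative ``softer'' argument via a common recursion for the two generating polynomials is a valid way to get the equidistribution, and since the paper only ever uses Lemma~\ref{lem:FH} to derive the equidistribution in Proposition~\ref{prop:fwex_desB}, that route would be fully sufficient here.
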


For $\pi=\pi_1\cdots\pi_n\in B_n$, we define $-\pi\in B_n$ by
$(-\pi)_i=-(\pi_i)$. We also define $\pi^{\tr}\in B_n$ to be the signed
permutation such that $\pi^{\tr}_i =\epsilon \cdot j$ if and only $\pi_j =
\epsilon \cdot i$ for $\epsilon\in\{1,-1\}$ and $i,j\in [n]$. In other words, if
$M(\pi)$ is the signed permutation matrix of $\pi$, then $M(-\pi)=-M(\pi)$ and
$M(\pi^{\tr}) = M(\pi)^{\tr}$. Here, the \emph{signed permutation matrix}
$M(\pi)$ is the $n\times n$ matrix whose $(i,j)$-entry is $1$ if $\pi_i=j$, $-1$
if $\pi_i=-j$, and $0$ otherwise. The following lemma is easy to prove.

 \begin{lem}\label{lem:fwex}
For $\pi\in B_n$, we have
\[
\fdes(\pi) + \fdes(-\pi) = 2n-1, \qquad 
\fwex(\pi) + \fexc(\pi^{\tr}) = 2n.
\]
 \end{lem}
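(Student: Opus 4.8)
The plan is to verify each of the two identities directly from the definitions of the statistics, treating them as elementary counting arguments on $\pi\in B_n$. For the first identity, I would start from $\fdes(\pi)=\des(\pi)+\des_B(\pi)$ and analyze how each term behaves under $\pi\mapsto-\pi$. Writing $\pi=\pi_1\cdots\pi_n$ and $-\pi=(-\pi_1)\cdots(-\pi_n)$, note that for each $i\in[n-1]$ exactly one of $\pi_i>\pi_{i+1}$ and $(-\pi_i)>(-\pi_{i+1})$ holds (they can never be equal since $\pi$ is injective and never sends two indices to the same value), so $\des(\pi)+\des(-\pi)=n-1$. For the type $B$ descent at position $0$, we use $\pi_0=0$: exactly one of $\pi_1>0$ and $-\pi_1>0$ holds, hence the extra descent at $0$ is present for exactly one of $\pi,-\pi$. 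Combining, $\des_B(\pi)+\des_B(-\pi)=(n-1)+1=n$, and therefore $\fdes(\pi)+\fdes(-\pi)=(n-1)+n=2n-1$.

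For the second identity I would argue that $\wex(\pi)+\exc(\pi^{\tr})=n$ together with $\neg(\pi)=\neg(\pi^{\tr})$, which immediately gives $\fwex(\pi)+\fexc(\pi^{\tr})=2\wex(\pi)+\neg(\pi)+2\exc(\pi^{\tr})+\neg(\pi^{\tr})=2n$. The fact $\neg(\pi)=\neg(\pi^{\tr})$ is clear from the matrix description: $M(\pi^{\tr})=M(\pi)^{\tr}$ has the same number of $-1$ entries as $M(\pi)$. For $\wex(\pi)+\exc(\pi^{\tr})=n$, I would use that $i$ is a weak excedance of $\pi$ iff the $(i,j)$-entry of $M(\pi)$ with $|j|\ge i$ is a $\pm1$ with sign matching $\pi_i\ge i$; more concretely, I would split on the sign of $\pi_i$. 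If $\pi_i=-j$ with $j>0$, then $i$ is never a weak excedance of $\pi$, and in $\pi^{\tr}$ we have $\pi^{\tr}_j=-i<j$, so $j$ is not an excedance of $\pi^{\tr}$ either; such indices contribute a pair $(i,j)$ counted in neither statistic, and there are $\neg(\pi)$ of them on each side. Among the remaining indices, where $\pi$ acts as an unsigned bijection on a subset, one uses the classical fact that for a permutation $\sigma$ of a set, $i$ is a weak excedance of $\sigma$ iff $\sigma(i)$ is \emph{not} an excedance of $\sigma^{-1}$ (equivalently $\sigma^{-1}(\sigma(i))=i\le\sigma(i)$ means $\sigma(i)$ is a weak excedance position of $\sigma^{-1}$, so not an excedance iff $i=\sigma(i)$ or... ), so I would instead phrase it cleanly: a position $i$ with $\pi_i>0$ is a weak excedance of $\pi$ ($\pi_i\ge i$) iff the position $\pi_i$ of $\pi^{\tr}$ satisfies $\pi^{\tr}_{\pi_i}=i\le\pi_i$, i.e. is a weak excedance of $\pi^{\tr}$; and excedances/non-excedances pair up so that each fixed point contributes to $\wex$ but not $\exc$. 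Counting carefully, weak excedances of $\pi$ and excedances of $\pi^{\tr}$ partition the $n$ positions (the fixed points of the positive part landing on the $\wex$ side, the strict ones pairing across), giving the total $n$.

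The routine part is the bookkeeping in the second identity; I do not expect any genuine obstacle, since both statements are purely formal consequences of the definitions, and indeed the paper flags the lemma as ``easy to prove.'' The one point requiring a little care is making sure the signed (negative) positions are accounted for consistently on both sides — that they contribute $0$ to $\wex(\pi)$ and $0$ to $\exc(\pi^{\tr})$ but are nonetheless matched up as $n-$many positions total — which is why isolating $\neg(\pi)=\neg(\pi^{\tr})$ first and then reducing to the classical permutation identity on the positive part is the cleanest route.
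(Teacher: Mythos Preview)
Your argument for the first identity is correct and is exactly the kind of direct verification the paper has in mind.

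Your argument for the second identity contains a genuine error. You claim $\wex(\pi)+\exc(\pi^{\tr})=n$, but this is false whenever $\neg(\pi)>0$: for $\pi=(-1)\in B_1$ one has $\wex(\pi)=0$ and $\exc(\pi^{\tr})=0$. In fact your own analysis of the negative positions shows why: if $\pi_i=-j$ with $j>0$ then neither $i$ contributes to $\wex(\pi)$ nor $j$ to $\exc(\pi^{\tr})$, so such pairs contribute $0$ to the sum. Only the $n-\neg(\pi)$ pairs $(i,\pi_i)$ with $\pi_i>0$ contribute, and each contributes exactly $1$ (as you correctly argue: $\pi_i\geq i$ iff $\pi^{\tr}_{\pi_i}=i\leq\pi_i$, i.e.\ iff $\pi_i$ is \emph{not} an excedance of $\pi^{\tr}$). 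The correct intermediate identity is therefore
\[
   \wex(\pi)+\exc(\pi^{\tr}) \;=\; n-\neg(\pi).
\]
Note also that your arithmetic would not have closed up even had your claim been true: from $\wex(\pi)+\exc(\pi^{\tr})=n$ and $\neg(\pi)=\neg(\pi^{\tr})$ one would get $\fwex(\pi)+\fexc(\pi^{\tr})=2n+2\neg(\pi)$, not $2n$. With the corrected identity one obtains
\[
  \fwex(\pi)+\fexc(\pi^{\tr}) \;=\; 2\bigl(n-\neg(\pi)\bigr)+\neg(\pi)+\neg(\pi^{\tr}) \;=\; 2n,
\]
as desired. The fix is minor and the overall strategy is the intended one.
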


 We now are ready to prove a type $B$ analog of \eqref{eq:10}.
\begin{prop}\label{prop:fwex_desB}
We have
\begin{equation}
  \label{eq:8}
\#\{\pi\in B_n: \fwex(\pi)=k\} = \#\{\pi\in B_n: \fdes(\pi)=k-1\}
= \#\{\pi\in B_n: \fexc(\pi)=k-1\},
\end{equation}
and
\begin{equation}
  \label{eq:9}
\#\{\pi\in B_n: \des_B(\pi)=k\} = \#\{\pi\in B_n: \floor{\fwex(\pi)/2}=k\}.  
\end{equation}
\end{prop}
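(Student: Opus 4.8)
The plan is to derive both identities of Proposition~\ref{prop:fwex_desB} from the equidistribution facts already collected, using the three symmetry-type involutions $\pi\mapsto-\pi$, $\pi\mapsto\pi^{\tr}$, and the Foata--Han bijection $\psi$. For \eqref{eq:8}, start from Lemma~\ref{lem:fwex}: the map $\pi\mapsto\pi^{\tr}$ is an involution on $B_n$ sending $\fwex(\pi)=k$ to $\fexc(\pi^{\tr})=2n-k$, so $\#\{\pi:\fwex(\pi)=k\}=\#\{\pi:\fexc(\pi)=2n-k\}$. Likewise $\pi\mapsto-\pi$ sends $\fdes(\pi)=k-1$ to $\fdes(-\pi)=2n-k$, so $\#\{\pi:\fdes(\pi)=k-1\}=\#\{\pi:\fdes(\pi)=2n-k\}$. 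By Lemma~\ref{lem:FH} the statistics $\fexc$ and $\fdes$ are equidistributed on $B_n$, hence $\#\{\pi:\fexc(\pi)=2n-k\}=\#\{\pi:\fdes(\pi)=2n-k\}$. Chaining these three equalities yields $\#\{\pi:\fwex(\pi)=k\}=\#\{\pi:\fdes(\pi)=k-1\}$; combining once more with Lemma~\ref{lem:FH} gives the second equality $\#\{\pi:\fdes(\pi)=k-1\}=\#\{\pi:\fexc(\pi)=k-1\}$, completing \eqref{eq:8}.

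For \eqref{eq:9} I would compare $\des_B$ with $\fdes=\des+\des_B$ rather than going through $\fwex$ directly: by \eqref{eq:8} it suffices to show $\#\{\pi:\des_B(\pi)=k\}=\#\{\pi:\fdes(\pi)\in\{2k,2k+1\}\}$ (since $\floor{\fwex(\pi)/2}=k$ exactly when $\fwex(\pi)\in\{2k,2k+1\}$, i.e.\ $\fdes(\pi)\in\{2k-1,2k\}$, and then reindex via \eqref{eq:8}). The key elementary observation is that, because $0\le\des(\pi)\le n-1$ and $\des_B(\pi)=\des(\pi)+\chi(\pi_1<0)\in\{\des(\pi),\des(\pi)+1\}$, one always has $\ceiling{\fdes(\pi)/2}=\des_B(\pi)$, a fact already recalled in the introduction (Adin et al.). From $\ceiling{\fdes(\pi)/2}=\des_B(\pi)$ we get that $\des_B(\pi)=k$ holds precisely when $\fdes(\pi)\in\{2k-1,2k\}$, so
\[
\#\{\pi\in B_n:\des_B(\pi)=k\}=\#\{\pi\in B_n:\fdes(\pi)=2k-1\}+\#\{\pi\in B_n:\fdes(\pi)=2k\}.
\]
Applying \eqref{eq:8} to rewrite each term as a $\fwex$-count, $\#\{\fdes=2k-1\}=\#\{\fwex=2k\}$ and $\#\{\fdes=2k\}=\#\{\fwex=2k+1\}$, and the sum of these is exactly $\#\{\pi:\fwex(\pi)\in\{2k,2k+1\}\}=\#\{\pi:\floor{\fwex(\pi)/2}=k\}$, which is \eqref{eq:9}.

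The steps are each short, so the only real content is organizational: making sure the index shifts in \eqref{eq:8} (between $k$, $k-1$, and $2n-k$) are applied consistently, and verifying carefully the arithmetic identity $\ceiling{\fdes(\pi)/2}=\des_B(\pi)$ in the boundary cases (e.g.\ $\des(\pi)=n-1$ with $\pi_1<0$, forcing $\fdes(\pi)=2n-1$ and $\des_B(\pi)=n$). I expect the main obstacle to be bookkeeping rather than any genuine difficulty: one must be slightly careful that $\fdes$ ranges over $\{0,1,\dots,2n-1\}$ so that the pairing $\fdes(\pi)\in\{2k-1,2k\}\leftrightarrow\des_B(\pi)=k$ is a genuine bijection of value sets for $0\le k\le n$ (with the $k=0$ case $\fdes(\pi)=0$ corresponding to $\des_B(\pi)=0$ handled separately, since there is no value $\fdes=-1$).
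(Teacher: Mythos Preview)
Your proof is correct and follows essentially the same route as the paper's: chain Lemmas~\ref{lem:fwex} and~\ref{lem:FH} to get $\#\{\fwex=k\}=\#\{\fexc=2n-k\}=\#\{\fdes=2n-k\}=\#\{\fdes=k-1\}=\#\{\fexc=k-1\}$, and then derive \eqref{eq:9} from \eqref{eq:8} via $\des_B(\pi)=\lceil\fdes(\pi)/2\rceil=\lfloor(\fdes(\pi)+1)/2\rfloor$. One small slip to clean up: your preview sentence for \eqref{eq:9} says it suffices to show $\#\{\des_B=k\}=\#\{\fdes\in\{2k,2k+1\}\}$, but the correct set (and the one you actually establish two lines later) is $\{2k-1,2k\}$.
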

\begin{proof}
By Lemmas~\ref{lem:FH} and \ref{lem:fwex}, we have  
\begin{align*}
\#\{\pi\in B_n: \fwex(\pi)=k\} &= \#\{\pi\in B_n: \fexc(\pi)=2n-k\}
= \#\{\pi\in B_n: \fdes(\pi)=2n-k\}\\
&= \#\{\pi\in B_n: \fdes(\pi)=2n-1-(2n-k)\}\\
&= \#\{\pi\in B_n: \fdes(\pi)=k-1\}=\#\{\pi\in B_n: \fexc(\pi)=k-1\}.
\end{align*}
Equation~\eqref{eq:9} follows from Equation~\eqref{eq:8} and the fact that
$\des_B(\pi) = \flr{(\fdes(\pi)+1)/2}$.
\end{proof}

\section{\texorpdfstring{$q$-Eulerian numbers of type $B$}{q-Eulerian numbers of type B}}
\label{sec:qEuler}

Let $k$ and $n$ be integers with $1\leq k \leq n $.  The (type $A$)
\emph{Eulerian number} $E_{n,k}$ is the number of $\pi\in S_n$ with
$\des(\pi)=k-1$. Equivalently, $E_{n,k}$ is the number of $\pi\in S_n$ with
$\wex(\pi)=k$.  The \emph{$q$-Eulerian number} $E_{n,k}(q)$ originally defined
in \cite{Williams2005} is as follows:
\[ 
  E_{n,k}(q) = B_{n,2k}(0,q) = \sum_{\substack{\pi\in S_n\\ \wex(\pi)=k}} q^{\cro(\pi)}.
\]

Now let $k$ and $n$ be integers with $0\leq k \leq n$.  The \emph{type~$B$
  Eulerian number} $E_{n,k}^B$ is the number of $\pi\in B_n$ with
$\des_B(\pi)=k$. By Equation~\eqref{eq:9}, $E_{n,k}^B$ is also the
number of $\pi\in B_n$ with $\flr{\fwex(\pi)/2}=k$.

\begin{defn}
  For $0\leq k \leq n$, we define the \emph{type~$B$ $q$-Eulerian number}
  $E_{n,k}^B(q)$ as follows:
\[ E_{n,k}^B(q) =\sum_{\substack{\pi\in B_n\\ \flr{\fwex(\pi)/2}=k}} q^{\cro(\pi)}.\]
\end{defn}

By Theorem~\ref{thm:ZZ} we can write
\begin{equation}
  \label{eq:7}
E_{n,k}^B(q) = B_{n,2k}(1,q) + B_{n,2k+1}(1,q).  
\end{equation}
By Proposition~\ref{prop:zigzag} we have a different expression for
$E_{n,k}^B(q)$ using permutation tableaux of type $B$:
\[ 
E_{n,k}^B(q) =\sum_{\substack{T\in\PT_B(n)\\ \row(T)+\floor{\diag(T)/2}=k}} q^{\so(T)}.
\]

The $q$-Eulerian number $E_{n,k}(q)$ becomes the binomial coefficient $\binom{n-1}{k-1}$
when $q=-1$, the Narayana number $\frac{1}{n}
\binom{n}{k} \binom{n}{k-1}$ when $q=0$, and the Eulerian number $E_{n,k}$ when
$q=1$. Thus we have $E_{n,k}(q) = E_{n,n+1-k}(q)$ in the special cases $q\in\{-1,0,1\}$. 
More generally:
\begin{prop}[Williams \cite{Williams2005}]\label{thm:symA}
For any $1\leq k\leq n$, we have
  \[ E_{n,k}(q) = E_{n,n+1-k}(q).\]
\end{prop}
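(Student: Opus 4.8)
The plan is to reduce the statement to a bijection on permutations. By the Steingr\'imsson--Williams map $\Phi\colon\PT(n)\to S_n$ (which satisfies $\row(T)=\wex(\Phi(T))$ and $\so(T)=\cro(\Phi(T))$) one has $E_{n,k}(q)=\sum_{\sigma\in S_n,\ \wex(\sigma)=k}q^{\cro(\sigma)}$, and by the second known interpretation $E_{n,k}(q)=\sum_{\sigma\in S_n,\ \des(\sigma)=k-1}q^{\mot(\sigma)}$, where $\mot(\sigma)$ counts occurrences of the vincular pattern $31\text{-}2$. It therefore suffices to produce, for each $d$, a bijection of $S_n$ carrying $\{\,\des=d\,\}$ onto $\{\,\des=n-1-d\,\}$ and preserving $\mot$; taking $d=k-1$ then yields $E_{n,k}(q)=E_{n,n+1-k}(q)$, since $n-1-(k-1)=(n+1-k)-1$.

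The natural candidate is the reverse--complement $\sigma\mapsto w_0\sigma w_0$ with $w_0(i)=n+1-i$. First I would record the elementary identity $\des(w_0\sigma w_0)=n-1-\des(\sigma)$, so that this map already matches the two descent classes. The remaining point is its effect on the pattern $31\text{-}2$: a direct inspection of triples of positions shows that reflecting both positions and values sends an occurrence of $31\text{-}2$ to an occurrence of $2\text{-}31$, i.e.\ $\mot(w_0\sigma w_0)=\mott(\sigma)$. Hence $E_{n,k}(q)=\sum_{\des(\tau)=n-k}q^{\mott(\tau)}$, and to conclude I would invoke the equidistribution of $\mot$ and $\mott$ over permutations with a prescribed number of descents, which follows from the bijective results of \cite{Corteel2007} relating each of these pattern statistics to $\cro$. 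As an alternative, one can try the reverse--complement directly in the $(\wex,\cro)$ picture: it preserves $\cro$ (on the arc diagram it is just a reflection, hence preserves crossings), but the fixed points of $\sigma$ produce an off-by-$\mathrm{fix}(\sigma)$ discrepancy in $\wex$, so the map must be adjusted by a fixed-point bookkeeping move — harmless for crossings since a fixed point is a loop that crosses nothing, but awkward to make canonical and involutive.

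I expect the main obstacle to be precisely this ``residual'' part — the equidistribution of $\mot$ and $\mott$ at fixed descent number (equivalently, the fixed-point correction in the excedance version) — rather than the reverse--complement step, which is routine. If one wishes to bypass it, there are two further routes: (i) Williams's original argument on permutation tableaux, which gives a shape-altering involution of $\PT(n)$ taking a tableau with $k$ rows (hence $n-k$ columns) to one with $n+1-k$ rows (hence $k-1$ columns) while preserving the number of superfluous $1$'s; and (ii) deducing the identity from an explicit product/summation formula for $E_{n,k}(q)$ in which the symmetry $k\leftrightarrow n+1-k$ is manifest, in the spirit of Theorem~\ref{bnformula} and \cite{J}, at the cost of replacing the bijection by a generating-function computation.
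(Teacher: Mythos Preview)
The paper does not give its own proof of this proposition; it is simply quoted from \cite{Williams2005}. So there is no in-paper argument to compare against, but your principal line of attack contains a genuine error.

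The identity $\des(w_0\sigma w_0)=n-1-\des(\sigma)$ is false. Conjugation by $w_0$ is the reverse--complement, and reverse--complement \emph{preserves} the number of descents: it sends a descent at position $i$ to a descent at position $n-i$. (Check $n=3$, $\sigma=132$: then $w_0\sigma w_0=213$, and both have one descent.) It is reversal $\sigma\mapsto\sigma w_0$ alone, or complementation $\sigma\mapsto w_0\sigma$ alone, that exchanges descents with ascents; but neither of those sends the pattern $31\text{-}2$ to $2\text{-}31$ --- they send it to $2\text{-}13$ and to $13\text{-}2$, respectively. Thus the two effects you want (flipping $\des$ to $n-1-\des$, and transporting $31\text{-}2$ to $2\text{-}31$) are realized by \emph{different} involutions, and you are left needing a joint equidistribution of $(\des,31\text{-}2)$ with, say, $(\des,13\text{-}2)$, which is no easier than the original statement. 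Your alternative in the $(\wex,\cro)$ picture has the same flavor of difficulty: reverse--complement does not literally preserve $\cro$ either, because the two clauses in the definition of a crossing use a weak inequality $i<j\le\pi_i<\pi_j$ versus a strict one $i>j>\pi_i>\pi_j$; the discrepancy is again tied to fixed points, and the repair is not the harmless bookkeeping you suggest.

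The two fallback routes you list at the end are the honest ones. Route (ii) is essentially how the result arises in \cite{Williams2005}, where an explicit formula for $E_{n,k}(q)$ makes the $k\leftrightarrow n+1-k$ symmetry visible. If you want a bijective proof in the spirit of the present paper, the rotation map $\rho$ on pignose matchings used to prove the type~$B$ symmetry (Lemmas~\ref{thm:moving} and~\ref{thm:transform}) specializes cleanly to unsigned permutations: for $\pi\in S_n$ with pignose matching $M$, the matching $(\rho(M))^{\rev}$ is again a pignose matching, has the same number of crossings, and has $n+1-\wex(\pi)$ upper arcs. That gives directly a $\cro$-preserving involution exchanging $\wex=k$ with $\wex=n+1-k$.
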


Our $q$-analog of the type $B$ Eulerian numbers also has such properties.

\begin{thm}[Theorem~\ref{qeulrianb} in the introduction]
 For any $0\leq k \leq n$, we have $E_{n,k}^B(-1)=\binom nk$,  $E_{n,k}^B(0)=\binom nk^2$ (the Narayana numbers of type $B$),
$E_{n,k}(1)$ is the type $B$ Eulerian number, and 
\begin{equation}
  \label{eq:11}
 E^B_{n,k}(q)=E^B_{n,n-k}(q).
\end{equation}
\end{thm}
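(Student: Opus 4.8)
The plan is to establish the four assertions separately, deriving each from results already available in the excerpt together with known facts about type $B$ Eulerian numbers. The easiest is the $q=1$ case: by definition $E^B_{n,k}(1)=\#\{\pi\in B_n:\flr{\fwex(\pi)/2}=k\}$, and Equation~\eqref{eq:9} of Proposition~\ref{prop:fwex_desB} identifies this with $\#\{\pi\in B_n:\des_B(\pi)=k\}$, which is the classical type $B$ Eulerian number. For the symmetry \eqref{eq:11}, I would invoke Theorem~\ref{bnsym}: combining \eqref{eq:7} with $B_{n,k}(1,q)=B_{n,2n+1-k}(1,q)$ gives
\[
E^B_{n,k}(q)=B_{n,2k}(1,q)+B_{n,2k+1}(1,q)=B_{n,2n+1-2k}(1,q)+B_{n,2n-2k}(1,q)=E^B_{n,n-k}(q),
\]
since $\{2n-2k,\,2n+1-2k\}=\{2(n-k),\,2(n-k)+1\}$. (If one wishes to keep this section logically prior to Theorem~\ref{bnsym}, one can instead prove the needed instance $B_{n,k}(1,q)=B_{n,2n+1-k}(1,q)$ directly here, e.g.\ from the continued fraction of Theorem~\ref{bnfrac} at $t=1$, or from an involution on $\PT_B(n)$; but citing Theorem~\ref{bnsym} is cleanest.)

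For $q=-1$ and $q=0$ I would work with the generating-function/continued-fraction machinery. Setting $q=0$ in Theorem~\ref{bnfrac} gives $\gamma_h=y^2+1+ty(\,[h=0]\cdot\text{(correction)}\,)$ — more precisely $\gamma_h|_{q=0}=y^2+1+ty\cdot\chi(h=0)$ after using $[h]_0=\chi(h\ge1)$, $q^h|_{q=0}=\chi(h=0)$, and $\lambda_h|_{q=0}=y(y+t\chi(h=1))(1+ty\chi(h=0))$; actually only $\lambda_1|_{q=0}=y(y+t)$ and $\lambda_h|_{q=0}=y^2$ for $h\ge2$. Taking the further specialization $t=1$ collapses this to a Motzkin-path continued fraction whose weights are exactly those enumerating the type $B$ Narayana numbers $\binom nk^2$; alternatively, and perhaps more transparently, I would pass through the permutation-tableau description $E^B_{n,k}(0)=\#\{T\in\PT_B(n):\so(T)=0,\ \row(T)+\flr{\diag(T)/2}=k\}$ and enumerate $\so=0$ type $B$ permutation tableaux directly, these being essentially pairs of lattice paths. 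For $q=-1$ I expect the slickest route is again the continued fraction: at $q=-1$ we have $[h]_{-1}=\chi(h\text{ odd})$, so $\gamma_h$ and $\lambda_h$ take one of two values according to the parity of $h$, the contraction/even-part formula for J-fractions applies, and the resulting two-step continued fraction for $\sum_n B_n(y,1,-1)z^n$ should be recognizably $\sum_{n,k}\binom nk y^{2k}\cdot(\text{something})z^n$, giving $E^B_{n,k}(-1)=B_{n,2k}(1,-1)+B_{n,2k+1}(1,-1)=\binom nk$.

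The main obstacle will be the $q=-1$ computation: extracting the closed form $\binom nk$ from the contracted continued fraction at $q=-1$ requires some care with the parity bookkeeping in $\gamma_h,\lambda_h$ and with how the $y^{2k}$ versus $y^{2k+1}$ contributions recombine into a single binomial coefficient. A cleaner alternative I would seriously consider is to avoid the continued fraction entirely and exhibit a sign-reversing involution on $\{\pi\in B_n:\flr{\fwex(\pi)/2}=k\}$ whose fixed points are counted by $\binom nk$ and which changes $\cro(\pi)$ by $\pm1$ off the fixed-point set — this is the type $B$ analogue of the standard argument for $E_{n,k}(-1)=\binom{n-1}{k-1}$, adapting the known crossing-based involution on $S_n$ to signed permutations. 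Whichever route is taken, once $q\in\{-1,0,1\}$ are settled the symmetry \eqref{eq:11} in those cases is an immediate consistency check ($\binom nk=\binom n{n-k}$, $\binom nk^2=\binom n{n-k}^2$, and the type $B$ Eulerian numbers are symmetric), so the only substantive work is the three special-value evaluations plus the appeal to Theorem~\ref{bnsym} for general $q$.
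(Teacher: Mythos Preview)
Your treatment of $q=1$ and of the symmetry \eqref{eq:11} matches the paper exactly: both follow immediately from \eqref{eq:9} and from \eqref{eq:7} together with Theorem~\ref{bnsym}.

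For $q=-1$ your continued-fraction route is precisely what the paper does, but the computation is far simpler than you anticipate. There is no contraction or parity bookkeeping needed: at $q=-1$ one has $[2]_{-1}=0$, hence $\lambda_2=0$, so the J-fraction of Theorem~\ref{bnfrac} \emph{terminates} after the second level. With $\gamma_0=y^2+y$, $\gamma_1=1-y$, $\lambda_1=y(1+y)(1-y)$ one gets in two lines
\[
\sum_{n\geq0}B_n(y,1,-1)z^n=\frac{1-(1-y)z}{1-(1+y^2)z}=1+\frac{(y+y^2)z}{1-(1+y^2)z},
\]
whence $B_n(y,1,-1)=(y+y^2)(1+y^2)^{n-1}$ for $n\geq1$, i.e.\ $[y^k]B_n(y,1,-1)=\binom{n-1}{\lceil k/2\rceil-1}$, and then $E^B_{n,k}(-1)=\binom{n-1}{k-1}+\binom{n-1}{k}=\binom nk$. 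Your proposed sign-reversing involution is an interesting alternative but unnecessary here.

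For $q=0$ the paper takes a route different from either of yours: rather than working directly with the continued fraction at $q=0$ or counting $\so=0$ tableaux, it specializes the closed formula \eqref{formulean1} of Theorem~\ref{bnformula} at $q=0$. Only the $\ell=0$ term of the inner sum survives, an alternating binomial sum telescopes, and one obtains $B_n(y,1,0)=\sum_i y^i\binom{n}{\lfloor i/2\rfloor}\binom{n-1}{\lceil i/2\rceil-1}$, from which $E^B_{n,k}(0)=\binom nk\binom{n-1}{k-1}+\binom nk\binom{n-1}{k}=\binom nk^2$. Your bijective alternative is essentially the one the paper records as a remark, via the map from crossing-free signed permutations to type~$B$ noncrossing partitions; so that route is valid too, though it bypasses the analytic machinery the paper develops.
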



The values at $q=-1$ and $q=0$ will be obtained respectively in Section~\ref{MA}
and Section~\ref{EF}.  Recall that
\[
B_{n,k}^*(t,q) =\sum_{\substack{\pi\in B_n\\ \fwex(\pi)=k}} 
t^{\neg(\pi) + \chi(\pi_1>0)} q^{\cro(\pi)}.
\]
Since $B_{n,k}^*(1,q)=B_{n,k}(1,q)$, the symmetric property \eqref{eq:11} is a
direct consequence of \eqref{eq:7} together with the following result:

\begin{thm}[Theorem \ref{bnsym} in the introduction]  \label{thm:sym}
For $1\leq k\leq 2n$, we have 
\[ B_{n,k}^*(t,q)=B_{n,2n+1-k}^*(t,q).\]  
\end{thm}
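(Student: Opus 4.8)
The plan is to exhibit an explicit involution $\iota$ on $B_n$ that sends $\fwex$ to $2n-k$ when it starts at $k$, preserves $\cro$, and transforms the exponent $\neg(\pi)+\chi(\pi_1>0)$ in the right way. The natural candidate is a composition of the two "flip" operations already introduced in the excerpt: the map $\pi\mapsto-\pi$ (negate every entry) and the transpose map $\pi\mapsto\pi^{\tr}$. By Lemma~\ref{lem:fwex} we have $\fwex(\pi)+\fexc(\pi^{\tr})=2n$, so $\pi\mapsto\pi^{\tr}$ already converts $\fwex$ into $\fexc$ with the reversed index; the remaining task is to convert $\fexc$ back into $\fwex$ without changing the index further, which is exactly the content of turning weak excedances into excedances. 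For unsigned permutations this is classically done by the complement or by a cycle-type manipulation; the signed analog should come from combining $-(\cdot)$ and $(\cdot)^{\tr}$ appropriately, possibly with a conjugation by the longest element. I would first pin down precisely which composition $\iota$ does the job on $\fwex$, checking the small cases $B_1$ (where $B^*_{1,1}(t,q)=t\cdot t=t^2$ from $\pi=(-1)$, wait—recompute: $\pi=(1)$ gives $\fwex=2$, $\neg+\chi=0+1=1$; $\pi=(-1)$ gives $\fwex=1$, $\neg+\chi=1+0=1$, so $B^*_{1,1}=t$ and $B^*_{1,2}=t$, confirming the symmetry) against the claimed identity.

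Next I would verify that $\iota$ preserves $\cro$. This is the step I expect to be the main obstacle: crossings are defined by a three-case condition mixing signs and positions, and neither $-(\cdot)$ nor $(\cdot)^{\tr}$ individually preserves $\cro$ on the nose — rather, one expects $\cro$ and $\al$ (alignments) to swap roles under some of these operations, in analogy with Proposition~\ref{prop:corteel_intro} and Proposition~\ref{prop:corteelB_intro}. So the argument will likely need the type~$B$ "crossings $+$ alignments" identity $2\cro(\pi)+\al(\pi)=n^2-2n+k$ for $\fwex(\pi)=k$: if $\iota$ swaps $\cro$ with (a piece of) $\al$ while simultaneously sending $k$ to $2n-k$, then feeding both into that linear relation shows $\cro$ is in fact preserved. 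I would therefore prove the two auxiliary facts (a) $\al(\iota(\pi))$ relates to $\cro(\pi)$ in a controlled way and (b) $\fwex(\iota(\pi))=2n-k$, and then solve the resulting two linear equations to conclude $\cro(\iota(\pi))=\cro(\pi)$.

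Finally I would track the statistic $\neg(\pi)+\chi(\pi_1>0)$ under $\iota$. The point of adding $\chi(\pi_1>0)$ — recorded already in the introduction via the observation $B_{1,1}(t,q)=t\ne 1=B_{1,2}(t,q)$ — is precisely to restore symmetry by "charging" one extra $t$ exactly to the half of signed permutations whose first entry is positive, so that the $t$-exponent becomes invariant under $\iota$. Concretely, if $\iota$ involves the transpose, then $\neg$ is preserved (a signed permutation matrix and its transpose have the same number of $-1$ entries), and the delicate bookkeeping is to show that the correction term $\chi(\pi_1>0)$ is also preserved, or that any discrepancy in $\neg$ coming from the $-(\cdot)$ part is exactly offset by the change in $\chi(\pi_1>0)$. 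Once these three invariances — $\fwex\mapsto 2n-k$, $\cro$ fixed, $\neg+\chi$ fixed — are established, summing $t^{\neg(\pi)+\chi(\pi_1>0)}q^{\cro(\pi)}$ over the two index classes related by $\iota$ gives $B^*_{n,k}(t,q)=B^*_{n,2n+1-k}(t,q)$ immediately, and setting $t=1$ collapses $B^*$ to $B$, yielding $B_{n,k}(1,q)=B_{n,2n+1-k}(1,q)$ and hence, via \eqref{eq:7}, the symmetry $E^B_{n,k}(q)=E^B_{n,n-k}(q)$.
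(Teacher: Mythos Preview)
Your proposal is a strategy sketch rather than a proof, and its central step has a genuine gap: no composition of $\pi\mapsto-\pi$ and $\pi\mapsto\pi^{\tr}$ can serve as the involution $\iota$. Already for $n=2$ this fails concretely. The permutation $\pi=(2,1)$ has $\fwex(\pi)=2$, and the group generated by $-(\cdot)$ and $(\cdot)^{\tr}$ stabilizes the set $\{(2,1),(-2,-1)\}$, both of whose elements have $\fwex=2$; hence no such composition sends $\pi$ to anything with $\fwex=2n+1-2=3$. (Note also that you target $\fwex\mapsto 2n-k$ rather than the required $2n+1-k$; Lemma~\ref{lem:fwex} yields only $\fexc(\pi^{\tr})=2n-\fwex(\pi)$, and converting $\fexc$ back to $\fwex$ needs a further Foata--Han type bijection whose effect on $\cro$ is unknown.) Your fallback via Proposition~\ref{prop:corteelB_intro} does not rescue the argument: that identity is a single linear relation in the two unknowns $\cro(\iota(\pi))$ and $\al(\iota(\pi))$, so deducing $\cro(\iota(\pi))=\cro(\pi)$ would require independent control of $\al$ under $\iota$, which you never establish. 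The $\neg+\chi$ bookkeeping also breaks for $-(\cdot)$, since $\neg(-\pi)=n-\neg(\pi)$ cannot in general be offset by a $\pm1$ change in $\chi(\pi_1>0)$.

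The paper's route is structurally different and uses none of $-(\cdot)$, $(\cdot)^{\tr}$, or the alignment identity. It splits $B_n=B_n^+\cup B_n^-$ by the sign of $\pi_1$, writes $B^*_{n,k}=t\,B^+_{n,k}+B^-_{n,k}$, and constructs an explicit bijection $\phi:B_n^+\to B_n^-$ satisfying $\cro(\phi(\pi))=\cro(\pi)$, $\neg(\phi(\pi))=\neg(\pi)+1$, and $\fwex(\phi(\pi))=2n+1-\fwex(\pi)$; the symmetry follows immediately. The map $\phi$ is built via the \emph{pignose diagram}: one unrolls $\pi$ into an ordinary matching on $[2m+2n]$ (where $m=\neg(\pi)$), applies $\rho^{(2m+1)}$ with $\rho$ the cyclic shift moving the leftmost vertex to the far right, takes a $180^\circ$ rotation, re-rolls into a signed permutation, and finally negates the first entry. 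Crossing invariance is proved directly from the geometry (Lemmas~\ref{lem:divide} and~\ref{thm:moving}): each application of $\rho$ destroys and creates exactly the same number of crossings across a vertical cut through the relevant pignose.
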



In order to prove Theorem~\ref{thm:sym} we introduce a diagram representing a
signed permutation.

\subsection{Pignose diagrams}
Given a set $U$ of $2n$ distinct integers, an \emph{ordered matching} on $U$ is
a set of ordered pairs $(i,j)$ of integers such that each integer in $U$ appears
exactly once. For an ordered matching $M$ on $U$ containing $2n$ integers
$a_1<a_2<\cdots<a_{2n}$, we define the \emph{standardization} $\st(M)$ of $M$ to
be the ordered matching on $[2n]$ obtained from $M$ by replacing $a_i$ with $i$
for each $i\in[2n]$.  For example, if $M=\{(2,6),(5,3),(9,4), (7,8)\}$,
then $\st(M)$ is the ordered matching $\{(1,5),(4,2),(8,3), (6,7)\}$ on $[8]$.

We represent an ordered matching $M$ on $U$ as follows. Arrange the integers in
$U$ on a horizontal line in increasing order. For each pair $(i,j)\in M$,
connect $i$ and $j$ with an upper arc if $i<j$, and with a lower arc if
$i>j$. For example, the following represents $\{(1,5),(4,2),(8,3), (6,7)\}$.
\[
\begin{pspicture}(0,-.5)(9,2)
\multido{\n=1+1}{8}{\vput{\n} \rput(\n,.5){$\n$}}
\edge15 \edge67 \edge 42 \edge 83
\end{pspicture}
\]
We also define a \emph{crossing} of an ordered matching $M$ to be two
intersecting arcs and denote by $\cro(M)$ the number of crossings of $M$.

For $\pi=\pi_1\cdots\pi_n\in B_n$, the \emph{pignose diagram} of $\pi$ is
defined as follows. First we arrange $2n$ vertices in a horizontal line where
the $(2i-1)$th vertex and the $2i$th vertex are enclosed by an ellipse labeled
with $i$ which we call the $i$th \emph{pignose}. The left vertex and the right
vertex in a pignose are called the \emph{first vertex} and the \emph{second
  vertex} respectively.  For each $i\in[n]$, we connect the first vertex of the
$i$th pignose and the second vertex of the $\pi_i$th pignose with an arc in the
following way. If $\pi_i>0$, then draw an arc above the horizontal line if
$\pi_i\ge i$ and below the horizontal line if $\pi_i<i$.  If $\pi_i<0$, then we
draw an arc starting from the first vertex of the $i$th pignose below the
horizontal line to the second vertex of the $\pi_i$th pignose above the
horizontal line like a spiral oriented clockwise. We draw these spiral arcs so
that these are not crossing each other below the horizontal line. For example,
the following is the pignose diagram of $\pi=(4, -6, 1, -5, -3, 7, 2)$.
\[
\psset{unit=15pt}
\begin{pspicture}(-4,-2)(13,4) 
\vvput{-0.3}{1a} \vvput{0.3}{1b} \rput(0,.5){$1$} \pig{0} \vvput{1.7}{2a} \vvput{2.3}{2b} \rput(2,.5){$2$} \pig{2} \vvput{3.7}{3a} \vvput{4.3}{3b} \rput(4,.5){$3$} \pig{4} \vvput{5.7}{4a} \vvput{6.3}{4b} \rput(6,.5){$4$} \pig{6} \vvput{7.7}{5a} \vvput{8.3}{5b} \rput(8,.5){$5$} \pig{8} \vvput{9.7}{6a} \vvput{10.3}{6b} \rput(10,.5){$6$} \pig{10} \vvput{11.7}{7a} \vvput{12.3}{7b} \rput(12,.5){$7$} \pig{12} \pnode(-1.5,1){m1a} \pnode(-1.5,1){m1b} \pnode(-2.5,1){m2a} \pnode(-2.5,1){m2b} \pnode(-3.5,1){m3a} \pnode(-3.5,1){m3b} \edge{1a}{4b} \edge{2a}{m1b} \edge{m1a}{6b} \edge{3a}{1b} \edge{4a}{m2b} \edge{m2a}{5b} \edge{5a}{m3b} \edge{m3a}{3b} \edge{6a}{7b} \edge{7a}{2b} 
\end{pspicture}
\]

For $\pi\in B_n$, one can easily check that
\begin{itemize}
\item $\cro(\pi)$ is  the number of unordered
 pairs of two arcs crossing each other in the pignose diagram of $\pi$, 
\item $\fwex(\pi)$ is twice the number of upper arcs plus the number of spiral
  arcs, or equivalently, the number of vertices with a half arc above the
  horizontal line, 
\item $\neg(\pi)$ is the number of spiral arcs. 
\end{itemize}

Since $S_n$ is contained in $B_n$, the pignose diagram for $\pi\in S_n$ is also
defined. Note that the pignose diagram of $\pi\in S_n$ can be considered as an
ordered matching on $[2n]$ by removing the ellipses enclosing two vertices and
labeling the $2n$ vertices with $1,2\ldots,2n$ from left to right.  We call an
ordered matching that can be obtained in this way a \emph{pignose matching}.
Not all ordered matchings are pignose matchings.  One can easily prove the
following proposition.

\begin{prop}\label{thm:OM}
  An ordered matching $M$ on $[2n]$ is a pignose matching if and only if in the
  representation of $M$ each odd integer has a half arc of type
  $\halfarc(.5,.5)$ or $\halfarc(-.5,-.5)$, and each even integer has a half arc
  of type $\halfarc(-.5,.5)$ or $\halfarc(.5,-.5)$.
\end{prop}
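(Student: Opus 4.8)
The plan is to unwind the definition of the pignose diagram of a permutation $\pi\in S_n$ and to bookkeep, for each of its $n$ arcs, which endpoint sits in an odd position and which in an even position.

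First I would set up the dictionary between pignose diagrams and ordered matchings. In the pignose diagram of $\pi\in S_n$ there are no spiral arcs, the first vertex of the $i$th pignose occupies position $2i-1$, and the second vertex of the $j$th pignose occupies position $2j$; so after erasing the ellipses and relabelling the vertices $1,\dots,2n$ from left to right, the arc attached to index $i$ becomes the ordered pair $(2i-1,\,2\pi_i)$. I would then check that the ``above the line'' choice in the pignose construction agrees with the ``upper arc'' convention of the matching picture: the pignose arc of $i$ is drawn above the line iff $\pi_i\ge i$, and this is exactly the inequality $2i-1<2\pi_i$. Hence the relabelled diagram really is an ordered matching $M$ on $[2n]$, and in $M$ every odd integer $2i-1$ is the \emph{first} coordinate of its pair while every even integer $2\pi_i$ is the \emph{second} coordinate of its pair. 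Reading off half arcs: the first coordinate of an upper arc points up and to the right, the first coordinate of a lower arc points down and to the left, and dually for second coordinates; since $\pi$ is a bijection every odd position equals some $2i-1$ and every even position equals some $2\pi_i$, so each odd integer carries a half arc of type $\halfarc(.5,.5)$ or $\halfarc(-.5,-.5)$ and each even integer a half arc of type $\halfarc(-.5,.5)$ or $\halfarc(.5,-.5)$. This is the ``only if'' direction.

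For the converse I would start from an ordered matching $M$ on $[2n]$ having the stated half-arc property. The half-arc condition forces each odd integer to be the first coordinate, and each even integer the second coordinate, of the pair of $M$ containing it, so every pair of $M$ has the form $(2a-1,\,2b)$ with $a,b\in[n]$. I would then define $\pi$ by $\pi_a=b$ whenever $(2a-1,2b)\in M$: since $M$ is a matching each odd integer lies in a unique pair, so $\pi\colon[n]\to[n]$ is well defined, and since each even integer lies in a unique pair $\pi$ is a bijection, i.e.\ $\pi\in S_n$. Running the dictionary of the previous paragraph in reverse, the pignose diagram of this $\pi$ relabels back to $M$ (the arc of $a$ is $(2a-1,2b)$, drawn above the line iff $b\ge a$ iff $2a-1<2b$), so $M$ is a pignose matching.

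I do not expect a real obstacle here; the statement is essentially a repackaging of the construction. The only point needing care is the compatibility of the two ``above the line'' conventions, which comes down to the elementary equivalence $2i-1<2\pi_i\iff\pi_i\ge i$, together with being consistent about ``first/second vertex of a pignose'' versus ``first/second coordinate of an ordered pair''.
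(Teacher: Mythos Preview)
Your argument is correct and is exactly the straightforward unwinding of definitions that the paper has in mind; indeed the paper does not write out a proof at all, merely remarking that ``one can easily prove'' the proposition. Your careful check that $2i-1<2\pi_i\iff \pi_i\ge i$ is the only nontrivial point, and you handle it cleanly.
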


The following lemma was first observed by M\'edicis and Viennot \cite[Lemme~3.1
Cas b.4]{MV}. For reader's convenience we include a proof as well. 

\begin{lem}\label{lem:divide}
  For $\pi\in S_n$ and an integer $k\in[n]$, the number of integers $i\in[n]$
  with $i\leq k\leq \pi_i$ is equal to the number of integers $i\in[n]$ with
  $\pi_i < k < i$ plus $1$.  Pictorially, this means that in the pignose diagram
  of $\pi$ if we draw a vertical line dividing the $k$th pignose in the middle,
  then the number of upper arcs intersecting with this line is equal to the
  number of lower arcs intersecting with this line plus $1$ as shown below.
\[
\begin{pspicture}(-1,-2)(9,4) 
\vvput{-0.3}{1a} \vvput{0.3}{1b} \pig{0}
  \vvput{1.7}{2a} \vvput{2.3}{2b} \pig{2} \vvput{3.7}{3a} \vvput{4.3}{3b}
  \pig{4} \vvput{5.7}{4a} \vvput{6.3}{4b} \pig{6} \vvput{7.7}{5a}
  \vvput{8.3}{5b}\pig{8} 
\psline[linestyle=dotted](4,-1.7)(4,3.7)
\rput(4,2.5){\multido{\n=0+2}{4}{\rput(0,0.\n){\pscurve(-1,0)(0,.2)(1,0)}}}
\rput(4,-1){\multido{\n=0+2}{3}{\rput(0,0.\n){\pscurve(-1,0)(0,-.2)(1,0)}}}
\rput(5.5,2.7){\psscaleboxto(.5,1){\}}} \rput[l](6,2.7){$x+1$}
\rput(5.5,-.8){\psscaleboxto(.5,1){\}}} \rput[l](6,-.8){$x$}
\end{pspicture}
\]
\end{lem}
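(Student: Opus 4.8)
The plan is to reduce the assertion to a two‑line double count on the indices of $\pi$, and then obtain the picture by translating ``an arc meets the vertical line'' into an inequality. Set $A=\{i\in[n]:i\le k\le\pi_i\}$ and $B=\{i\in[n]:\pi_i<k<i\}$; the claim is $|A|=|B|+1$. I would also introduce the auxiliary set $C=\{i\in[n]:i\le k\text{ and }\pi_i<k\}$ and write $m=|C|$.

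The first step is to partition $\{i\in[n]:i\le k\}$, which has exactly $k$ elements, into $\{i\le k:\pi_i\ge k\}$ and $C$. The first block is exactly $A$, since for an index with $i\le k$ the condition $\pi_i\ge k$ is the same as $i\le k\le\pi_i$; hence $k=|A|+m$. The second step is to partition $\{i\in[n]:\pi_i<k\}$, which has exactly $k-1$ elements (it equals $\{\pi^{-1}(1),\dots,\pi^{-1}(k-1)\}$), into $C$ and $\{i>k:\pi_i<k\}$. The second block is exactly $B$, since $\pi_i<k$ together with $i>k$ is the same as $\pi_i<k<i$; hence $k-1=|B|+m$. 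Subtracting these two identities kills the common term $m$ and gives $|A|-|B|=1$, which is the first assertion.

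For the pictorial form, number the $2n$ vertices $1,\dots,2n$ from left to right, so that the $i$th pignose consists of vertices $2i-1$ (its first vertex) and $2i$ (its second vertex), and the line drawn through the middle of the $k$th pignose falls between vertices $2k-1$ and $2k$. Then the first vertex of the $i$th pignose lies left of the line iff $i\le k$, and the second vertex of the $j$th pignose lies left of the line iff $j\le k-1$, i.e. $j<k$. Hence the arc issued from the $i$th pignose crosses the line from left to right --- which forces $\pi_i\ge k\ge i$, so it is an upper arc --- exactly when $i\in A$, and crosses it from right to left --- which forces $\pi_i<k<i$, so it is a lower arc --- exactly when $i\in B$. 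Thus the asserted picture ($x+1$ upper arcs versus $x$ lower arcs, with $x=|B|$) is just a restatement of $|A|=|B|+1$.

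I do not expect a genuine obstacle: the lemma is elementary once the sets $A$, $B$ and the common block $C$ are isolated. The only point needing a little care is the handling of the strict versus non‑strict inequalities --- in particular the unique index with $\pi_i=k$, which lies in $A$ exactly when $i\le k$ and in neither $A$ nor $B$ otherwise, and whose treatment is already built consistently into both counts $k=|A|+m$ and $k-1=|B|+m$.
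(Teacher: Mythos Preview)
Your argument is correct. Both your proof and the paper's are elementary double counts of objects lying to the left of the dividing line, but the bookkeeping differs. The paper classifies the $2k-1$ half-arcs on the left side of the line into four geometric types $a,b,c,d$ (up-right, down-left, up-left, down-right), uses the pignose structure to get $a+b=c+d+1$, and then reads off the crossing counts as $a-c$ and $d-b$. You instead work directly with index sets: you partition $\{i\le k\}$ and $\{i:\pi_i<k\}$ so that they share the common block $C$, and the difference in their cardinalities ($k$ versus $k-1$) immediately yields $|A|-|B|=1$. Your decomposition is arguably more direct, since it avoids the intermediate half-arc typology and uses only the bijectivity of $\pi$; the paper's version has the advantage of making the pictorial statement (upper versus lower arcs) transparent from the outset rather than translating it at the end.
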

\begin{proof}
  Consider the pignose diagram of $\pi$ with a vertical line dividing the
  pignose of $k$ in the middle. Let $x$ (resp.~$y$) be the number of lower
  (resp.~upper) arcs intersecting with the vertical line. Let $a, b,c,d$ be,
  respectively, the number of half arcs to the left of the vertical line of the
  following types:
\[\halfarc(.5,.5), \quad
\halfarc(-.5,-.5), \quad
\halfarc(-.5,.5),\quad
\halfarc(.5,-.5).\]

For each pignose, the first vertex has a half edge of the first or the second
type and the second vertex has a half edge of the third or the fourth type. Thus
$a+b = c+d+1$. Since $x=a-c$ and $y=d-b$, we are done.
\end{proof}

\subsection{Proof of Theorem~\ref{thm:sym}}
 
Let
\[
  B_n^+  = \{\pi\in B_n: \pi_1>0\},\qquad
  B_n^-  = \{\pi\in B_n: \pi_1<0\},
\]
\[
B_{n,k}^+(t,q)=\sum_{\substack{\pi\in B_n^+\\ \fwex(\pi)=k}} 
t^{\neg(\pi)}q^{\cro(\pi)}, \qquad
B_{n,k}^-(t,q)=\sum_{\substack{\pi\in B_n^-\\ \fwex(\pi)=k}} 
t^{\neg(\pi)}q^{\cro(\pi)}.
\]
Then 
\begin{equation}
  \label{eq:13}
B_{n,k}^*(t,q) = t B_{n,k}^+(t,q) + B_{n,k}^-(t,q).  
\end{equation}
In this subsection we will prove the following proposition.

\begin{prop}\label{prop:bij}
  There is a bijection $\phi:B_n^+\to B_n^-$ such that 
\begin{equation}
  \label{eq:2}
\cro(\phi(\pi))=\cro(\pi),\qquad \neg(\phi(\pi))=\neg(\pi)+1, 
\qquad \fwex(\phi(\pi))=2n+1-\fwex(\pi).
\end{equation}
Thus,
\begin{equation}
  \label{eq:14}
t B_{n,k}^+(t,q) = B_{n,2n+1-k}^-(t,q), \qquad
B_{n,k}^-(t,q) = t B_{n,2n+1-k}^+(t,q). 
\end{equation}
\end{prop}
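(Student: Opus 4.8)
Granting the bijection $\phi$, the identities in \eqref{eq:14} are immediate: substituting $\sigma=\phi(\pi)$ and using the three stated properties,
\[
tB_{n,k}^+(t,q)=\sum_{\substack{\pi\in B_n^+\\ \fwex(\pi)=k}}t^{\neg(\pi)+1}q^{\cro(\pi)}
=\sum_{\substack{\sigma\in B_n^-\\ \fwex(\sigma)=2n+1-k}}t^{\neg(\sigma)}q^{\cro(\sigma)}=B_{n,2n+1-k}^-(t,q),
\]
and the second identity of \eqref{eq:14} is this one read with $k$ replaced by $2n+1-k$ (equivalently, through $\phi^{-1}$). So the whole task is to build $\phi$ while controlling the crossing number, the number of spiral arcs ($=\neg$), and the number of vertices carrying an ``up'' half-arc ($=\fwex$).

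The plan is to realize $\phi$ as an explicit surgery on pignose diagrams. Recall that a pignose diagram of an element of $B_n$ amounts to a matching of its $2n$ vertices together with, for each arc, the choice of being positive (an upper or a lower arc, according to the order of its two endpoints) or a spiral, and that $\pi\in B_n^+$ exactly when the leftmost vertex carries an upper arc rather than a spiral. I would define $\phi(\pi)$ by turning the upper arc that leaves the leftmost vertex into a spiral --- this is what sends the image into $B_n^-$ and raises $\neg$ by one --- while simultaneously reflecting the rest of the configuration so that upper and lower arcs get interchanged, every spiral being redrawn in the prescribed non-crossing fashion in the lower region. Then I would check, in order: (i) that the output is again a pignose diagram of a signed permutation, by verifying that the half-arc parity conditions of Proposition~\ref{thm:OM} (read so as to allow spirals) survive the surgery, and that the leftmost vertex now carries a spiral, so $\phi(\pi)\in B_n^-$; (ii) that $\neg(\phi(\pi))=\neg(\pi)+1$, since exactly one arc changed from positive to spiral and the old spirals remain spirals; (iii) that $\cro(\phi(\pi))=\cro(\pi)$, since reflecting the picture is a bijection on crossing pairs of arcs and the single rerouted arc, drawn as a non-crossing spiral, meets exactly the arcs it met before; (iv) that $\fwex(\phi(\pi))=2n+1-\fwex(\pi)$, where the reflection exchanges ``up'' and ``down'' at the vertices --- which by itself would turn an up-count $k$ into $2n-k$ --- and the missing $+1$ is precisely the surplus of one upper arc over lower arcs across a dividing line that Lemma~\ref{lem:divide} (or its signed analog) records, applied at the first pignose. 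Finally I would exhibit $\phi^{-1}$, the same surgery run backwards (turning the leftmost spiral back into an upper arc and reflecting), and verify $\phi^{-1}\phi=\mathrm{id}=\phi\phi^{-1}$; this makes $\phi$ a bijection, and \eqref{eq:14} then follows as above, which together with \eqref{eq:13} gives Theorem~\ref{thm:sym}.

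The step I expect to fight hardest with is (iv): the surgery must deliver the exact value $2n+1-\fwex(\pi)$, not $2n-\fwex(\pi)$ or $\fwex(\pi)\pm1$, and this particular off-by-one is the reason Lemma~\ref{lem:divide} was set up just before. Arranging the reflection so that it interacts correctly with the carried-along spirals (whose nesting in the lower region is prescribed) while keeping (i) and (iv) simultaneously true is the technical heart of the argument.
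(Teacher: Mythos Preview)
Your reduction of \eqref{eq:14} to the existence of $\phi$ is fine, and your bookkeeping for $\neg$ and $\fwex$ is correct in spirit. The genuine gap is in step~(i): the surgery you describe does not land in the set of pignose diagrams of signed permutations. If an upper arc joins the first vertex of pignose~$i$ to the second vertex of pignose~$j$ with $i\le j$, then ``interchanging upper and lower'' leaves it joining the same two vertices but drawn below the axis; this does not encode any value of $\pi_i$ (a lower arc must have $i>j$). Equivalently, in the language of Proposition~\ref{thm:OM}, horizontal reflection sends the half-arc types $\{\nearrow,\swarrow\}$ at odd positions to $\{\searrow,\nwarrow\}$, which are precisely the types forbidden there. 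So the parity check you flag as routine actually fails, and there is no signed permutation to call $\phi(\pi)$.

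The paper's construction circumvents exactly this obstruction. One first \emph{unwinds} the $m=\neg(\pi)$ spirals to obtain an honest pignose matching $M$ on $[2m+2n]$, then applies the cyclic shift $\rho$ (move the leftmost vertex to the far right) $2m+1$ times, and only then takes the $180^\circ$ rotation $(\cdot)^{\rev}$. The odd number of shifts fixes the parity so that the rotated object is again a pignose matching (Lemma~\ref{thm:transform}); the crossing count is preserved because each single shift $\rho$ preserves it (Lemma~\ref{thm:moving}), and this is precisely where Lemma~\ref{lem:divide} is used. One then rewinds the spirals and finally negates the first entry (Lemma~\ref{thm:first_element}). Your proposal has the right target statistics and even the right intuition that a rotation-like symmetry should swap up and down, but it is missing the unwinding/shift mechanism that makes the image a signed permutation and the lemma that keeps $\cro$ constant through it.
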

Note that Theorem~\ref{thm:sym} follows from \eqref{eq:13} and \eqref{eq:14}.
In order to prove Proposition~\ref{prop:bij} we need some definitions and
lemmas.

Given an ordered matching $M$ on $[2n]$, we define $\rho(M)$ to be the ordered
matching $\st(M')$ where $M'$ is the ordered matching on $\{2,3,\dots,2n+1\}$
obtained from $M$ by replacing $1$ with $2n+1$.  For example, if
$M=\{(1,5),(4,2),(8,3), (6,7)\}$, then $M'=\{(9,5),(4,2),(8,3), (6,7)\}$ and
$\rho(M)=\st(M')=\{(8,4),(3,1),(7,2), (5,6)\}$.  Pictorially, $\rho(M)$ is
obtained from $M$ by moving the first vertex to the end and reflecting the arc
adjacent to this vertex as follows.
\[
\begin{pspicture}(0,-.5)(10,2)
\multido{\n=1+1}{8}{\vput{\n} \rput(\n,.5){$\n$}}
\edge67 \edge 42 \edge 83 
\psset{linecolor=red}
\edge15
\rput(10,1){$\displaystyle \substack{\rho\\ \longrightarrow}$}
\end{pspicture} 
\begin{pspicture}(-1,-.5)(9,2)
\multido{\n=1+1}{8}{\vput{\n} \rput(\n,.5){$\n$}}
{\psset{linecolor=red}
\edge84} \edge31 \edge72 \edge56
\end{pspicture}
\]

We denote $\rho^{(k)}=\overbrace{\rho\circ\cdots\circ\rho}^k$.

\begin{lem}\label{thm:moving}
  Let $M$ be a pignose matching on $[2n]$. Then $\rho^{(k)}(M)$ has the same
  number of crossings as $M$ for all positive integers $k$.
\end{lem}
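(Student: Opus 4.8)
The plan is to reduce everything to a single application of $\rho$, and to show directly that $\rho$ preserves the number of crossings on \emph{arbitrary} ordered matchings on $[2n]$ (not just pignose matchings), so that iteration is immediate. Concretely, I would first observe that if $M$ is a pignose matching then $\rho(M)$ need not be a pignose matching, but the crossing count $\cro$ makes sense for any ordered matching, so it suffices to prove the single statement $\cro(\rho(M)) = \cro(M)$ for every ordered matching $M$ on $[2n]$; the lemma then follows by an obvious induction on $k$.

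So fix $M$ and let $(1,m)$ or $(m,1)$ be the arc of $M$ incident to the vertex $1$. Write $a$ for the arc incident to $1$ and note that $\rho$ does the following: it deletes $1$, shifts every label down by $1$ (so old label $j$ becomes $j-1$, and in particular $m$ becomes $m-1$), appends a new rightmost vertex $2n$, and reattaches $a$ as an arc between $2n$ and $m-1$, with its up/down type reflected (an upper arc of $M$ becomes a lower arc of $\rho(M)$ and vice versa, because after the move the vertex $m-1$ is on the \emph{left} end of $a$ rather than the right). All arcs of $M$ not equal to $a$ are simply relabeled by $j\mapsto j-1$, which is an order-preserving bijection of the remaining $2n-2$ vertices, so it preserves all crossings \emph{among} those arcs, and it preserves whether such an arc crosses $a$ whenever that crossing does not ``involve the vertex $1$ in an essential way.'' Thus the only thing to check is: for each arc $b$ of $M$ with $b\neq a$, does $b$ cross $a$ in $M$ if and only if the relabeled $b$ crosses the moved arc in $\rho(M)$?

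The key step — and the main obstacle — is this local case analysis for a single arc $b = (x,y)$ (or $(y,x)$) versus $a$. Two arcs cross iff, reading the four endpoints along the line, they interleave \emph{and} (being ordered arcs drawn on the same side or on opposite sides) the drawing convention makes them actually intersect; for an ordered matching drawn with upper/lower arcs determined by the direction, "crossing" of $b$ with $a$ depends on the cyclic/linear interleaving of $\{1,m\}$ with $\{x,y\}$ together with which of $a,b$ is upper and which is lower. I would set up the (finitely many) positional cases for where $x,y$ sit relative to $1$ and $m$ — both right of $m$, both between $1$ and $m$, one on each side, etc. — and in each case compare the status in $M$ (where $a$ is incident to the leftmost vertex $1$ and has some type $\tau$) with the status in $\rho(M)$ (where $a$ is incident to the rightmost vertex $2n$ and has the opposite type $\bar\tau$, while $m$ has moved from position $m$ to position $m-1$, which is the same position relative to all the unmoved vertices $x,y$). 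The point that makes it work is that "incident to the global leftmost vertex with type $\tau$" and "incident to the global rightmost vertex with type $\bar\tau$" impose exactly the same interleaving-plus-side condition on any other arc; reflecting the type exactly compensates for moving the free endpoint from the extreme left to the extreme right. Carrying out this bookkeeping carefully in each case gives $\cro(\rho(M)) = \cro(M)$, and then induction on $k$ finishes the proof.
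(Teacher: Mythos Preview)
Your reduction to the single-step claim ``$\cro(\rho(M))=\cro(M)$ for \emph{every} ordered matching $M$'' is false, and this is exactly where your argument breaks. Take $n=2$ and $M=\{(1,3),(2,4)\}$: both arcs are upper and their endpoints interleave, so $\cro(M)=1$. Applying $\rho$ gives $\{(4,2),(1,3)\}$; now $(4,2)$ is a lower arc and $(1,3)$ is an upper arc, so they lie on opposite sides of the axis and do not intersect, whence $\cro(\rho(M))=0$. Your heuristic that ``reflecting the type exactly compensates for moving the free endpoint from the extreme left to the extreme right'' is simply not true for ordered matchings drawn with the upper/lower convention: an upper arc $b$ that interleaves with the old upper arc $a=(1,m)$ will, after the move, sit on the opposite side from the new lower arc and no longer cross it. The promised case analysis therefore cannot close.

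The pignose hypothesis is not a convenience but the crux. In the paper's proof, one uses Lemma~\ref{lem:divide}: in a pignose matching the vertex $1$ is the first vertex of a pignose, the arc $(1,m)$ lands on the second vertex of some pignose, and a vertical line through that pignose is crossed by exactly one more upper arc than lower arc. Removing the contribution of $(1,m)$ itself, this says the $x$ upper arcs that crossed the old arc are matched in number by $x$ lower arcs that cross the new arc, so $\cro$ is preserved for $k=1$. Since $\rho(M)$ is generally \emph{not} a pignose matching, one cannot iterate directly; instead one handles $k=2$ via the horizontal reflection $N\mapsto\overline{N}$ (which commutes with $\rho$ and preserves $\cro$), observing that $\overline{\rho(M)}$ \emph{is} again a pignose matching, and then reduces the general case to $k\in\{1,2\}$. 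Your plan needs to incorporate this structural input rather than attempting an unconditional invariance that does not hold.
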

\begin{proof}
  Note that $\rho^{(2)}(M)$ is a pignose matching. Thus it suffices to prove for
  $k=1$ and $k=2$.

  Considering $M$ as a pignose diagram of a permutation in $S_n$, assume that
  $1$ is connected to $i$. By Lemma~\ref{lem:divide}, if we draw a vertical line
  between the two vertices in the $i$th pignose, the number, say $x$, of upper
  arcs above $i$ except $(1,i)$ is equal to the number of lower arcs below
  $i$. Therefore, when we go from $M$ to $\rho(M)$, we lose $x$ crossings and
  obtain new $x$ crossings as shown below. 
\[
\begin{pspicture}(-1,-2)(10,4) 
\vvput{-0.3}{1a} \vvput{0.3}{1b} \pig{0}
  \vvput{1.7}{2a} \vvput{2.3}{2b} \pig{2} \vvput{3.7}{3a} \vvput{4.3}{3b}
  \pig{4} \vvput{5.7}{4a} \vvput{6.3}{4b} \pig{6} \vvput{7.7}{5a}
  \vvput{8.3}{5b}\pig{8} 
\psline[linestyle=dotted](4,-1.7)(4,3.7)
\rput(4,2.5){\multido{\n=0+2}{3}{\rput(0,0.\n){\pscurve(-1,0)(0,.2)(1,0)}}}
\rput(4,-1){\multido{\n=0+2}{3}{\rput(0,0.\n){\pscurve(-1,0)(0,-.2)(1,0)}}}
\rput(5.5,2.7){\psscaleboxto(.5,1){\}}} \rput[l](6,2.7){$x$}
\rput(5.5,-.8){\psscaleboxto(.5,1){\}}} \rput[l](6,-.8){$x$}
\rput(0,.5){$1$} \rput(3.7,.5){$i$} \rput(8,.5){$n$}
\edge{1a}{3b}
\rput(10,1){$\Rightarrow$}
\end{pspicture}
\begin{pspicture}(-2,-2)(9,4) 
\vvput{0.3}{1b} \pig{0}
  \vvput{1.7}{2a} \vvput{2.3}{2b} \pig{2} \vvput{3.7}{3a} \vvput{4.3}{3b}
  \pig{4} \vvput{5.7}{4a} \vvput{6.3}{4b} \pig{6} \vvput{7.7}{5a}
  \vvput{8.3}{5b}\pig{8} 
\psline[linestyle=dotted](4,-1.7)(4,3.7)
\rput(4,2.5){\multido{\n=0+2}{3}{\rput(0,0.\n){\pscurve(-1,0)(0,.2)(1,0)}}}
\rput(4,-1){\multido{\n=0+2}{3}{\rput(0,0.\n){\pscurve(-1,0)(0,-.2)(1,0)}}}
\rput(5.5,2.7){\psscaleboxto(.5,1){\}}} \rput[l](6,2.7){$x$}
\rput(5.5,-.8){\psscaleboxto(.5,1){\}}} \rput[l](6,-.8){$x$}
\rput(0,.5){$1$} \rput(3.7,.5){$i$} \rput(8,.5){$n$}
\vvput{9.7}{6a}
\edge{6a}{3b}
\end{pspicture}
\]
This proves the assertion for $k=1$.

To prove for $k=2$ we define the following. Given an ordered matching $N$, let
$\overline N$ be the ordered matching obtained by reflecting $N$ along the
horizontal line. It is easy to see that $\cro(N)=\cro(\overline N)$ and
$\overline{\rho(N)}=\rho(\overline N)$. Moreover if $N$ is a pignose matching
then so is $\overline{\rho(N)}$. Thus we have
  \begin{equation}
    \label{eq:1}
 \cro\left(\rho^{(2)}(M)\right)
=\cro\left(\overline{\rho^{(2)}(M)}\right)
=\cro\left(\rho\left(\overline{\rho(M)}\right)\right)
 \end{equation}

 Since $\overline{\rho(M)}$ is a pignose matching, using the assertion for
 $k=1$, we obtain that \eqref{eq:1} is equal to
\[ 
\cro\left( \overline{\rho(M)}\right)
=\cro\left( \rho(M)\right)
=\cro\left( M\right).
\]
Thus $\cro\left(\rho^{(2)}(M)\right)=\cro\left( M\right)$ and we are done.
\end{proof}

For an ordered matching $M$ on $[2n]$ we define $M^\rev$ to be the ordered
matching obtained from $M$ by replacing $i$ with $2n+1-i$ for each
$i\in[2n]$. Pictorially $M^\rev$ is obtained from $M$ by taking a $180^\circ$
rotation.

\begin{lem}\label{thm:transform}
  Let $M$ be a pignose matching on $[2n]$. Then $N=(\rho^{(2k+1)}(M))^\rev$ is
  also a pignose matching and $\cro(M)=\cro(N)$. 
\end{lem}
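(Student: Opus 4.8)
The plan is to handle the two assertions of the lemma separately; the crossing count falls out of Lemma~\ref{thm:moving}, while the statement that $N$ is a pignose matching is the substantive part. For the crossings, observe that for any ordered matching $R$ on $[2n]$ the $180^\circ$ rotation $R\mapsto R^\rev$ carries arcs to arcs and, for every pair of arcs, preserves whether or not the two arcs cross; hence $\cro(R^\rev)=\cro(R)$. Applying this with $R=\rho^{(2k+1)}(M)$ and then invoking Lemma~\ref{thm:moving} gives
\[
\cro(N)=\cro\big((\rho^{(2k+1)}(M))^\rev\big)=\cro\big(\rho^{(2k+1)}(M)\big)=\cro(M).
\]

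For the pignose property, I would first reduce to a single application of $\rho$. Recall from the proof of Lemma~\ref{thm:moving} the identity $\overline{\rho(R)}=\rho(\overline R)$ together with the fact that $\rho^{(2)}$ carries pignose matchings to pignose matchings; by induction $P:=\rho^{(2k)}(M)$ is then a pignose matching, so $N=(\rho(P))^\rev$ and it suffices to prove that $(\rho(P))^\rev$ is a pignose matching whenever $P$ is.

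To finish, set $Q=\overline{\rho(P)}$, which is a pignose matching because $R\mapsto\overline{\rho(R)}$ preserves this property. The operations $\overline{(\cdot)}$ (reflection in a horizontal axis) and $(\cdot)^\rev$ (rotation by $180^\circ$) commute, and $\overline Q=\rho(P)$, so $(\rho(P))^\rev=\overline{Q^{\rev}}$; that is, $(\rho(P))^\rev$ is precisely the reflection of the pignose matching $Q$ in a vertical axis. Thus the whole thing comes down to the claim that reflection in a vertical axis preserves the pignose property. By Proposition~\ref{thm:OM} this is the observation that such a reflection interchanges odd and even positions while on half-arc directions it interchanges the unordered pair $\{\nearrow,\swarrow\}$ permitted at odd vertices with the unordered pair $\{\nwarrow,\searrow\}$ permitted at even vertices (it sends $\nearrow\leftrightarrow\nwarrow$ and $\swarrow\leftrightarrow\searrow$). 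Hence $\overline{Q^\rev}=(\rho(P))^\rev$ meets the criterion of Proposition~\ref{thm:OM} and is a pignose matching, completing the proof.

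The only step I expect to require real care is the last one: correctly tracking the effect of $(\cdot)^\rev$ and $\overline{(\cdot)}$ on the four half-arc directions, so as to conclude that reflection in a vertical axis preserves the condition of Proposition~\ref{thm:OM}. Everything else is a formal manipulation of identities already established in the proof of Lemma~\ref{thm:moving}, plus the trivial invariance of crossings under rotation.
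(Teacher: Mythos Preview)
Your proof is correct and follows the same overall outline as the paper: the crossing equality via Lemma~\ref{thm:moving} plus rotational invariance, and the reduction of the pignose claim to the case $N=(\rho(P))^\rev$ with $P$ a pignose matching. The only difference is in how the last step is carried out: the paper simply says this ``easily follows from Proposition~\ref{thm:OM}'' (i.e.\ a direct check that the composite $\rho$-then-$\rev$ preserves parity of positions and sends the allowed half-arc types to themselves), whereas you route the argument through the already-known fact that $Q=\overline{\rho(P)}$ is a pignose matching and then observe that $(\rho(P))^\rev=\overline{Q^{\rev}}$ is the vertical reflection of $Q$, which swaps odd/even positions while simultaneously interchanging $\{\nearrow,\swarrow\}$ with $\{\nwarrow,\searrow\}$. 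Both verifications amount to the same parity/half-arc bookkeeping under Proposition~\ref{thm:OM}; your version is a clean factorization rather than a genuinely different method.
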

\begin{proof}
  Note that $\rho^{(2)}(M)$ is a pignose matching. Thus it suffices to prove
  that $(\rho(M))^\rev$ is a pignose matching, which easily follows from
  Proposition~\ref{thm:OM}.
\end{proof}



For $\pi\in B_n$, let $\pi^-=(-\pi_1)\pi_2\cdots \pi_n$.

\begin{lem}\label{thm:first_element}
  The map $\pi\mapsto\pi^-$ is a bijection from $B_n^+$ to $B_n^-$.  Moreover,
  we have $\cro(\pi)=\cro(\pi^-)$, $\neg(\pi)= \neg(\pi^-)-1$, and $\fwex(\pi)=
  \fwex(\pi^-)+1$.
\end{lem}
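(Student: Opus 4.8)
The plan is to verify the four assertions by direct inspection, noting that $\pi^-$ differs from $\pi$ only in the first coordinate; throughout, write $\pi_1=m>0$, so that $\pi^-_1=-m$ while $\pi^-_i=\pi_i$ for every $i\geq2$. First I would observe that $\pi\mapsto\pi^-$ is the restriction to $B_n^+$ of the involution of $B_n$ that changes the sign of the first entry, and this involution clearly interchanges $B_n^+$ and $B_n^-$; hence it is a bijection $B_n^+\to B_n^-$. The relation $\neg(\pi)=\neg(\pi^-)-1$ is immediate, since $\pi^-$ has exactly one more negative entry than $\pi$, namely in position $1$.

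For $\fwex$ I would use the identity $\fwex=2\wex+\neg$. Because $m\geq1$, the index $1$ is a weak excedance of $\pi$; because $\pi^-_1=-m<1$, the index $1$ is not a weak excedance of $\pi^-$; and for every $i\geq2$ the weak-excedance status is unchanged since $\pi^-_i=\pi_i$. Therefore $\wex(\pi^-)=\wex(\pi)-1$, and combining this with $\neg(\pi^-)=\neg(\pi)+1$ yields $\fwex(\pi^-)=\fwex(\pi)-1$, i.e.\ $\fwex(\pi)=\fwex(\pi^-)+1$.

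The only assertion requiring real work is $\cro(\pi)=\cro(\pi^-)$. Since $\pi$ and $\pi^-$ agree in all positions $\geq2$, a pair $(i,j)$ with $i,j\geq2$ is a crossing of $\pi$ if and only if it is a crossing of $\pi^-$, so it suffices to match the crossings in which the index $1$ occurs. I would split into the cases $i=1$ and $j=1$ and examine the three clauses in the definition of a crossing. The outcome is that, for $\pi$, the crossings through position $1$ are exactly the pairs $(1,j)$ with $2\leq j\leq m<\pi_j$ (first clause) together with the pairs $(i,1)$ with $i\geq2$ and $-m<\pi_i<0$ (second clause); and for $\pi^-$, since now $\pi^-_1=-m<0$, exactly the same pairs are crossings — the pairs $(1,j)$ satisfying the second clause and the pairs $(i,1)$ the third — while no other pair through position $1$ can satisfy any clause. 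Hence the number of crossings through position $1$ is the same for $\pi$ and for $\pi^-$, so $\cro(\pi)=\cro(\pi^-)$. Equivalently, in terms of pignose diagrams, passing from $\pi$ to $\pi^-$ merely replaces the upper arc leaving the leftmost vertex by a spiral arc with the same two endpoints, and one checks that this does not change the number of crossing pairs.

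The one delicate point is the bookkeeping in the previous paragraph: flipping the sign of $\pi_1$ transfers each crossing through position $1$ from one clause of the definition to another without creating or destroying any, so one must keep track of which clause is in force for $\pi$ as opposed to $\pi^-$. Everything else is a routine check at position $1$.
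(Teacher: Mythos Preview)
Your proof is correct and follows essentially the same idea as the paper, which simply observes that passing from $\pi$ to $\pi^-$ turns the upper arc at the leftmost vertex of the pignose diagram into a spiral arc with the same endpoints and declares the conclusion immediate. Your explicit case analysis of the three clauses in the definition of a crossing is a more detailed way of justifying that pictorial claim, and indeed you note the pignose-diagram viewpoint yourself at the end.
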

\begin{proof}
  This is an immediate consequence of the following observation: if $\pi\in
  B_n^+$, the pignose diagram of $\pi^-$ is obtained from the pignose diagram of
  $\pi$ by changing the upper arc adjacent to $1$ to a spiral arc as follows.
\[
\begin{pspicture}(-1,0)(7,2.5) 
 \vvput{-0.3}{1a} \vvput{0.3}{1b} \rput(0,.5){$1$} \pig{0} \vvput{1.7}{2a}
  \vvput{2.3}{2b} \pig{2} \vvput{3.7}{3a} \vvput{4.3}{3b}
\pig{4} \vvput{5.7}{4a} \vvput{6.3}{4b} \pig{6}
\edge{1a}{3b} 
\end{pspicture}
\begin{pspicture}(-5,0)(7,2.5)
\rput(-3.5,1){$\Rightarrow$}
  \vvput{-0.3}{1a} \vvput{0.3}{1b} \rput(0,.5){$1$} \pig{0} \vvput{1.7}{2a}
  \vvput{2.3}{2b} \pig{2} \vvput{3.7}{3a} \vvput{4.3}{3b}
  \pig{4} \vvput{5.7}{4a} \vvput{6.3}{4b}\pig{6} 
  \pnode(-1.5,1){m1a}
  \pnode(-1.5,1){m1b} \edge{m1b}{3b}  \edge{1a}{m1a}
\end{pspicture}
\]
\end{proof}

Now we are ready to define the map $\phi:B_n^+\to B_n^-$ in
Proposition~\ref{prop:bij}.  Suppose $\pi\in B_n^+$ and $\neg(\pi)=m$. We make
the pignose diagram of $\pi$ to be a pignose matching on $[2m+2n]$ by dividing
each spiral arc into one upper arc and one lower arc so that the left endpoint
of the upper arc is to the left of the left endpoint of the lower arc as shown
below.
\begin{equation}
  \label{eq:3}
\begin{pspicture}(-3,-1)(7,3.3)
\vvput{-0.3}{1a} \vvput{0.3}{1b} \rput(0,.5){$1$} \pig{0} \vvput{1.7}{2a} \vvput{2.3}{2b} \rput(2,.5){$2$} \pig{2} \vvput{3.7}{3a} \vvput{4.3}{3b} \rput(4,.5){$3$} \pig{4} \vvput{5.7}{4a} \vvput{6.3}{4b} \rput(6,.5){$4$} \pig{6} \pnode(-1.5,1){m1a} \pnode(-1.5,1){m1b} \pnode(-2.5,1){m2a} \pnode(-2.5,1){m2b} \edge{1a}{3b} \edge{2a}{m1b} \edge{m1a}{4b} \edge{3a}{m2b} \edge{m2a}{2b} \edge{4a}{1b} 
\end{pspicture}
\begin{pspicture}(-8,-1)(7,3.3) 
\rput(-6.5,1){$\Rightarrow$}
\vvput{-0.3}{1a}
  \vvput{0.3}{1b} \rput(0,.5){$1$} \pig{0} \vvput{1.7}{2a} \vvput{2.3}{2b}
  \rput(2,.5){$2$} \pig{2} \vvput{3.7}{3a} \vvput{4.3}{3b} \rput(4,.5){$3$}
  \pig{4} \vvput{5.7}{4a} \vvput{6.3}{4b} \rput(6,.5){$4$} \pig{6}
  \vvput{-2.3}{m1a} \vvput{-1.7}{m1b} 
  \vvput{-4.3}{m2a} \vvput{-3.7}{m2b} \edge{1a}{3b}
  \edge{2a}{m1b} \edge{m1a}{4b} \edge{3a}{m2b} \edge{m2a}{2b} \edge{4a}{1b}
\pig{-2} \pig{-4}
\end{pspicture}
\end{equation}

Let $M$ be the pignose matching obtained in this way, and let
$N=\left(\rho^{(2m+1)}(M)\right)^\rev$.  By Lemma~\ref{thm:transform}, $N$ is
also a pignose matching on $[2m+2n]$ and $\cro(M)=\cro(N)$. It is
straightforward to check that $N$ satisfies the following properties.
\begin{enumerate}
\item For each $i\in[2m]$, the $i$th vertex is connected to the $j$th vertex for
  some $j>2m$.
\item The first $m$ lower arcs do not cross each other.
\item The $(2m+1)$st vertex has an upper half arcs.
\item The number of upper half arcs adjacent to the last $2n$ vertices is
  $2n+2-k$. 
\end{enumerate}
By the first and the second properties, we can make $N$ to be the pignose
diagram of a signed permutation, say $\sigma\in B_n$, by identifying the
$(2i-1)$th vertex and the $(2i)$th vertex for each $i\in[m]$. Then
$\neg(\pi)=\neg(\sigma)$. By the third property, we have $\sigma\in B_n^+$, and
by the fourth property, we have $\fwex(\sigma)=2n+2-k$. We define $\phi(\pi)$ to
be $\sigma^-$. Clearly $\phi$ is a bijection from $B_n^+$ to $B_n^-$. By
Lemma~\ref{thm:first_element}, $\phi$ satisfies \eqref{eq:2}. This finishes the
proof of Proposition~\ref{prop:bij}.

\begin{example}
  Let $\pi=3,-4,-2,1\in B_4^+$. The pignose diagram of $\pi$ and $M$ are shown
  in \eqref{eq:3}. Then we can compute $\rho^{(5)}(M)$,
  $\left(\rho^{(5)}(M)\right)^\rev$, $\sigma$, and $\sigma^-$ as follows.
 \begin{center}
\begin{pspicture}(-6,-1)(12,3.3) 
\rput[r](-2,1){$\rho^{(5)}(M)=$}
 \vvput{0.3}{1b} \rput(0,.5){$1$} \pig{0} \vvput{1.7}{2a} \vvput{2.3}{2b}
  \rput(2,.5){$2$} \pig{2} \vvput{3.7}{3a} \vvput{4.3}{3b} \rput(4,.5){$3$}
  \pig{4} \vvput{5.7}{4a} \vvput{6.3}{4b} \rput(6,.5){$4$} \pig{6}
 \vvput{7.7}{5a}  \vvput{9.7}{6a}  \vvput{11.7}{7a}
 \vvput{8.3}{5b}  \vvput{10.3}{6b} 
\edge{7a}{3b}
  \edge{2a}{6b} \edge{6a}{4b} \edge{3a}{5b} \edge{5a}{2b} \edge{4a}{1b}
\end{pspicture}

\begin{pspicture}(-6,-1.5)(12,2.8) 
  \rput[r](-2,1){$\left(\rho^{(5)}(M)\right)^\rev=$} \vvput{-0.3}{1a}
  \vvput{0.3}{1b} \pig{0} \vvput{1.7}{2a} \vvput{2.3}{2b}
\pig{2} \vvput{3.7}{3a} \vvput{4.3}{3b} 
  \pig{4} \vvput{5.7}{4a} \vvput{6.3}{4b} \pig{6}
  \vvput{7.7}{5a} \vvput{8.3}{5b} \pig{8} \vvput{9.7}{6a}
  \vvput{10.3}{6b} \pig{10} \edge{1a}{4b} \edge{2a}{3b}
  \edge{3a}{5b} \edge{4a}{6b} \edge{5a}{2b} \edge{6a}{1b}
\end{pspicture}

\begin{pspicture}(-6,-1)(12,2.3) 
  \rput[r](-2,1){$\sigma=$} 
\rput(4,0){
\vvput{-0.3}{1a} \vvput{0.3}{1b} \rput(0,.5){$1$} \pig{0} \vvput{1.7}{2a} \vvput{2.3}{2b} \rput(2,.5){$2$} \pig{2} \vvput{3.7}{3a} \vvput{4.3}{3b} \rput(4,.5){$3$} \pig{4} \vvput{5.7}{4a} \vvput{6.3}{4b} \rput(6,.5){$4$} \pig{6} \pnode(-1.5,1){m1a} \pnode(-1.5,1){m1b} \pnode(-2.5,1){m2a} \pnode(-2.5,1){m2b} \edge{1a}{3b} \edge{2a}{4b} \edge{3a}{m1b} \edge{m1a}{1b} \edge{4a}{m2b} \edge{m2a}{2b} }
\end{pspicture}

\begin{pspicture}(-6,-1.5)(12,2.6) 
  \rput[r](-2,1){$\sigma^-=$} 
\rput(4,0){
\vvput{-0.3}{1a} \vvput{0.3}{1b} \rput(0,.5){$1$} \pig{0} \vvput{1.7}{2a} \vvput{2.3}{2b} \rput(2,.5){$2$} \pig{2} \vvput{3.7}{3a} \vvput{4.3}{3b} \rput(4,.5){$3$} \pig{4} \vvput{5.7}{4a} \vvput{6.3}{4b} \rput(6,.5){$4$} \pig{6} \pnode(-1.5,1){m1a} \pnode(-1.5,1){m1b} \pnode(-2.5,1){m2a} \pnode(-2.5,1){m2b} \pnode(-3.5,1){m3a} \pnode(-3.5,1){m3b} \edge{1a}{m1b} \edge{m1a}{3b} \edge{2a}{4b} \edge{3a}{m2b} \edge{m2a}{1b} \edge{4a}{m3b} \edge{m3a}{2b} }
\end{pspicture}
\end{center}

\end{example}

\section{Crossings and alignments}
\label{cro_ali}

For a permutation $\sigma\in S_n$, an \emph{alignment} is an unordered pair of
two arcs in the pignose diagram of $\sigma$ which look like one of the following
figures:
\begin{equation}
  \label{eq:6}
\raisebox{-15pt}{\begin{pspicture}(1,0)(4,2)
\vput1 \vput2 \vput3 \vput4
\edge14 \edge23
\end{pspicture}} \quad \mbox{or} \quad
\raisebox{-15pt}{\begin{pspicture}(1,0)(4,2)
\vput1 \vput2 \vput3 \vput4
\edge41 \edge32
\end{pspicture}} \quad \mbox{or} \quad
\raisebox{-15pt}{\begin{pspicture}(1,0)(4,2)
\vput1 \vput2 \vput3 \vput4
\edge12 \edge43
\end{pspicture}} \quad \mbox{or} \quad
\raisebox{-15pt}{\begin{pspicture}(1,0)(4,2)
\vput1 \vput2 \vput3 \vput4
\edge21 \edge34
\end{pspicture}}
\end{equation}

Let $\al(\sigma)$ denote the number of alignments of $\sigma$.
The following proposition was first proved by the first author \cite{Corteel2007} using
rather technical calculation. Here we provide another proof which is more
combinatorial.

\begin{prop}\label{prop:corteel}
If $\sigma\in S_n$ has $k$ weak excedances, then
\[ \cro(\sigma) + \al(\sigma) = (k-1)(n-k).\]
\end{prop}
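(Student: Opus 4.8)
The plan is to count, for a fixed permutation $\sigma\in S_n$ with $k$ weak excedances, the total number of \emph{unordered pairs of arcs} in the pignose diagram and show that every such pair is either a crossing or an alignment, except for a controlled set of ``exceptional'' pairs whose number is easy to evaluate. In the pignose diagram of $\sigma$ there are exactly $n$ arcs, one for each $i\in[n]$, so there are $\binom n2$ unordered pairs of arcs. The arc attached to index $i$ is an upper arc (a ``half arc above'' at both its endpoints) when $i$ is a weak excedance, and a lower arc otherwise; so $k$ arcs are ``upper'' and $n-k$ are ``lower''. The idea is that for two arcs $e_i,e_j$ that are \emph{not} nested with one strictly inside the other, the pair is a crossing precisely when the geometric picture is one of the crossing configurations, and is an alignment precisely when it is one of the four pictures in \eqref{eq:6}; the only pairs that are neither are the ``nested'' ones, which correspond to $j\le \pi_i$ and $i\le\pi_j$ with one interval containing the other (plus the fixed-point loops, which in $S_n$ give arcs $i\to i$ that one checks form neither a crossing nor an alignment with anything in the relevant way).

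Concretely, I would first recall the Corteel bijection-free description: a crossing of $\sigma\in S_n$ is a pair $i<j$ with $i<j\le\pi_i<\pi_j$ or $i>j>\pi_i>\pi_j$ (the two ``unsigned'' cases of the Definition), and an alignment is a pair realizing one of the four shapes in \eqref{eq:6}, namely $i\le\pi_i<\pi_j\le j$ (wait — I mean, $i<j$ with $\pi_i\ge j$ and $\pi_j\ge\ldots$; I will spell out all four inequality patterns when I write the details). Then I would partition the set of all $\binom n2$ pairs according to the relative order of the four numbers $i,j,\pi_i,\pi_j$ (really the multiset positions on the line), checking case by case that each pattern is a crossing, an alignment, or ``nested''. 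This reduces the identity to
\[
\cro(\sigma)+\al(\sigma)=\binom n2-\#\{\text{nested pairs}\}.
\]

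The remaining, and I expect main, step is to count the nested pairs. A pair $\{e_i,e_j\}$ is nested when one of the two arcs lies entirely above (in the interval sense) the other; equivalently, there is a point — say a vertical line through the middle of some pignose — that both arcs cross on the same side and that separates them. This is exactly the quantity controlled by Lemma~\ref{lem:divide}: if we sweep a vertical line through the middle of pignose $1,2,\dots,n$ in turn and let $u_m$ (resp.\ $\ell_m$) be the number of upper (resp.\ lower) arcs crossed at step $m$, then Lemma~\ref{lem:divide} gives $u_m=\ell_m+1$ for every $m$. Each nested pair of two upper arcs is ``witnessed'' at some line exactly once by the inner arc's right endpoint being passed while the outer is still open — a standard sweep-line argument shows the number of nested pairs of upper arcs is $\sum_m(u_m-1)$ minus a correction, and similarly for lower arcs; summing and using $u_m-\ell_m=1$ collapses everything. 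I would carry this out carefully: track at each of the $n$ sweep positions how many currently-open arcs of each type there are, note that a pair is nested iff at the moment the inner one closes the outer one is still open and of the same type, and sum $\binom{(\text{open upper})}{?}$-type contributions telescopically. The bookkeeping here — getting the off-by-ones right, and handling fixed points (arcs $i\to i$, which are ``degenerate'' and cross nothing) — is the fiddly part; everything else is a finite case check. At the end, the count of nested pairs should come out to $\binom k2+\binom{n-k}2$ (nested upper–upper plus nested lower–lower; an upper and a lower arc can never be nested since at the separating line Lemma~\ref{lem:divide} forces one extra upper arc, making a same-type witness impossible for a mixed pair), and then
\[
\cro(\sigma)+\al(\sigma)=\binom n2-\binom k2-\binom{n-k}2=(k-1)(n-k),
\]
which is the claim. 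I would double-check the last algebraic identity and the edge cases $k=1$ and $k=n$ (where one side of the product vanishes and indeed there are no crossings and no alignments of the relevant type) before finishing.
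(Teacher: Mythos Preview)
Your plan has a genuine gap, and in fact the final arithmetic already shows it: $\binom n2-\binom k2-\binom{n-k}2=k(n-k)$, not $(k-1)(n-k)$. So even if the rest were airtight, the conclusion would be off by exactly $n-k$.

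The source of the error is a misidentification of what an alignment is. Looking at \eqref{eq:6}, the first two pictures are precisely a pair of \emph{nested upper arcs} and a pair of \emph{nested lower arcs}. So nested same-type pairs are alignments, not exceptions; the pairs that are \emph{neither} crossings nor alignments are the \emph{disjoint} same-type pairs together with all mixed (upper,\,lower) pairs that are not separated as in the last two pictures of \eqref{eq:6}. In particular your claim that ``an upper and a lower arc can never be nested'' is false (an upper arc can positionally contain a lower arc), and such mixed overlapping pairs contribute to the ``neither'' count but are invisible in your $\binom k2+\binom{n-k}2$ bookkeeping. The correct ``neither'' count is $\binom n2-(k-1)(n-k)=\binom k2+\binom{n-k}2+(n-k)$, and recovering that extra $n-k$ requires exactly the kind of argument you were hoping to avoid.

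The paper's proof sidesteps this by \emph{not} looking at all $\binom n2$ pairs. It restricts from the start to the $k(n-k)$ mixed pairs $(U,L)$ and partitions them into three sets $A_1,A_2,A_3$; then Lemma~\ref{lem:divide} is used pointwise (at the right endpoint of each fixed $U$, and at the right endpoint of each fixed $L$) to trade the non-alignment mixed pairs in $A_2$ and $A_3$ for the upper--upper and lower--lower pairs that make up the remaining crossings and alignments. The asymmetry in Lemma~\ref{lem:divide} (upper count $=$ lower count $+1$) produces exactly the surplus $n-k$, one for each lower arc, giving $k(n-k)-(n-k)=(k-1)(n-k)$. If you want to push your global $\binom n2$ approach through, you will need a comparably careful use of the lemma to control the mixed overlapping pairs; the sweep-line sketch as written does not do this.
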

\begin{proof}
  Since $\wex(\pi)=k$, we have $k$ upper arcs and $n-k$ lower arcs in the
  pignose diagram of $\pi$. 

  Let $A$ be the set of pairs $(U,L)$ of an upper arc $U$ and a lower arc
  $L$. Then there are $k(n-k)$ elements in $A$. We define $A_1$, $A_2$ and $A_3$
  to be the subsets of $A$ consisting of the pairs of arcs whose relative
  locations look like the following:
\begin{center}
\raisebox{-15pt}{\begin{pspicture}(1,0)(4,2)
\vput1 \vput2 \vput3 \vput4
\edge12 \edge43
\end{pspicture}} \quad or \quad
\raisebox{-15pt}{\begin{pspicture}(1,0)(4,2)
\vput1 \vput2 \vput3 \vput4
\edge21 \edge34
\end{pspicture}} \quad for $A_1$,

\raisebox{-15pt}{\begin{pspicture}(1,0)(4,2)
\vput1 \vput2 \vput3 \vput4
\edge13 \edge42
\end{pspicture}} \quad or \quad
\raisebox{-15pt}{\begin{pspicture}(1,0)(4,2)
\vput1 \vput2 \vput3 \vput4
\edge23 \edge41
\end{pspicture}} \quad for $A_2$,

\raisebox{-15pt}{\begin{pspicture}(1,0)(4,2)
\vput1 \vput2 \vput3 \vput4
\edge31 \edge24
\end{pspicture}} \quad or \quad
\raisebox{-15pt}{\begin{pspicture}(1,0)(4,2)
\vput1 \vput2 \vput3 \vput4
\edge14 \edge32
\end{pspicture}} \quad for $A_3$,
\end{center}

Observe that $A=A_1\uplus A_2 \uplus A_3$. Fix an upper arc $U$ whose right
endpoint is the second vertex of the $i$th pignose. Then by
Lemma~\ref{lem:divide}, the number of elements $(U,L)\in A_2$ is equal to the
number of pairs $(U,U')$ of upper arcs whose relative locations are the
following:
\begin{equation}
  \label{eq:4}
\raisebox{-15pt}{\begin{pspicture}(1,0)(5,2)
\vput1 \vput2 \vvput{3.2}{3} \vvput{3.8}{4} \vput5 \pig{3.5}
\rput(3.5,.5){$i$}
\edge14 \edge25
\end{pspicture}} \quad \mbox{or}  \quad
\raisebox{-15pt}{\begin{pspicture}(1,0)(5,2)
\vput1 \vput2 \vvput{3.2}{3} \vvput{3.8}{4} \vput5 \pig{3.5}
\rput(3.5,.5){$i$}
\edge24 \edge15
\end{pspicture}}
\end{equation}

Now fix a lower arc $L$ whose right endpoint is the first vertex of the $j$th
pignose. Again by Lemma~\ref{lem:divide}, the number of elements $(U,L)\in A_3$
is one more than the number of pairs $(L,L')$ of lower arcs whose relative
locations are the following:
\begin{equation}
  \label{eq:5}
\raisebox{-15pt}{\begin{pspicture}(1,0)(5,2)
\vput1 \vput2 \vvput{3.2}{3} \vvput{3.8}{4} \vput5 \pig{3.5}
\rput(3.5,.5){$j$}
\edge31 \edge52
\end{pspicture}} \quad \mbox{or}  \quad
\raisebox{-15pt}{\begin{pspicture}(1,0)(5,2)
\vput1 \vput2 \vvput{3.2}{3} \vvput{3.8}{4} \vput5 \pig{3.5}
\rput(3.5,.5){$j$}
\edge51 \edge32
\end{pspicture}}
\end{equation}

Observe that a crossing or an alignment is either an element in $A_1$, or a pair
of arcs as shown in \eqref{eq:4} or \eqref{eq:5}.  Since we have $n-k$ lower
arcs, we obtain that the total number of crossings and alignments is equal to
\[
|A_1| + |A_2| + |A_3| - (n-k) = k(n-k)-(n-k) = (k-1)(n-k).
\]
\end{proof}

Now we define another representation of a signed permutation.  Note that a
signed permutation $\pi=\pi_1\cdots\pi_n\in B_n$ can be considered as a
bijection on $[\pm n]=\{1,2,\dots,n,-1,-2,\dots,-n\}$ with $\pi(i) = \pi_i$ and
$\pi(-i) = -\pi_i$ for $i\in[n]$.

We define the \emph{full pignose diagram} of $\pi\in B_n$ as follows. First we
arrange $2n$ pignoses in a horizontal line which are labeled with
$-n,-(n-1),\ldots,-1,1,2,\ldots, n$.  The \emph{first vertex} and the
\emph{second vertex} of a pignose labeled $i$ are, respectively, defined to be
the left vertex and the right vertex of the pignose if $i>0$, and to be the
right vertex and the left vertex of the pignose if $i<0$.  For each $i\in[\pm
n]$, we connect the first vertex of the pignose labeled with $i$ and the second
vertex of the pignose labeled with $\pi(i)$ with an arc in the following way. If
$\pi_i>0$, draw an arc above the horizontal line if $\pi(i)\ge i$, and below the
horizontal line if $\pi(i)<i$. For example, the following is the full pignose
diagram of $\pi=[4, -6, 1, -5, -3, 7, 2]$.

\[\psset{unit=.5cm}
\begin{pspicture}(-15,-3)(13,5) 
\vvput{-0.3}{1a} \vvput{0.3}{1b} \rput(0,.5){$1$} \pig{0} \vvput{1.7}{2a} \vvput{2.3}{2b} \rput(2,.5){$2$} \pig{2} \vvput{3.7}{3a} \vvput{4.3}{3b} \rput(4,.5){$3$} \pig{4} \vvput{5.7}{4a} \vvput{6.3}{4b} \rput(6,.5){$4$} \pig{6} \vvput{7.7}{5a} \vvput{8.3}{5b} \rput(8,.5){$5$} \pig{8} \vvput{9.7}{6a} \vvput{10.3}{6b} \rput(10,.5){$6$} \pig{10} \vvput{11.7}{7a} \vvput{12.3}{7b} \rput(12,.5){$7$} \pig{12} \vvput{-2.3}{m1a} \vvput{-1.7}{m1b} \rput(-2,1.5){$-1$} \pig{-2} \vvput{-4.3}{m2a} \vvput{-3.7}{m2b} \rput(-4,1.5){$-2$} \pig{-4} \vvput{-6.3}{m3a} \vvput{-5.7}{m3b} \rput(-6,1.5){$-3$} \pig{-6} \vvput{-8.3}{m4a} \vvput{-7.7}{m4b} \rput(-8,1.5){$-4$} \pig{-8} \vvput{-10.3}{m5a} \vvput{-9.7}{m5b} \rput(-10,1.5){$-5$} \pig{-10} \vvput{-12.3}{m6a} \vvput{-11.7}{m6b} \rput(-12,1.5){$-6$} \pig{-12} \vvput{-14.3}{m7a} \vvput{-13.7}{m7b} \rput(-14,1.5){$-7$} \pig{-14} \edge{1a}{4b} \edge{m1b}{m4a} \edge{m2b}{6b} \edge{2a}{m6a} \edge{3a}{1b} \edge{m3b}{m1a} \edge{m4b}{5b} \edge{4a}{m!
 5a} \edge{m5b}{3b} \edge{5a}{m3a} \edge{6a}{7b} \edge{m6b}{m7a} \edge{7a}{2b} \edge{m7b}{m2a} 
\end{pspicture}
\]

The following lemma can be shown by the same argument in the proof of
Lemma~\ref{lem:divide}. 

\begin{lem}\label{lem:divide_B}
  Let $\pi\in B_n$ and $k\in[n]$. In the full pignose diagram of $\pi$ if we
  draw a vertical line dividing the pignose labeled $-k$ (resp.~$k$) in the
  middle, then the number of upper arcs intersecting with this line is equal to
  the number of lower arcs intersecting with this line minus $1$ (resp.~plus
  $1$) as shown below.
\[
\begin{pspicture}(-1,-2)(9,4) 
\vvput{-0.3}{1a} \vvput{0.3}{1b} \pig{0}
  \vvput{1.7}{2a} \vvput{2.3}{2b} \pig{2} \vvput{3.7}{3a} \vvput{4.3}{3b}
  \pig{4} \vvput{5.7}{4a} \vvput{6.3}{4b} \pig{6} \vvput{7.7}{5a}
  \vvput{8.3}{5b}\pig{8} 
\psline[linestyle=dotted](4,-1.7)(4,3.7)
\rput(4,2.5){\multido{\n=0+2}{3}{\rput(0,0.\n){\pscurve(-1,0)(0,.2)(1,0)}}}
\rput(4,-1){\multido{\n=0+2}{4}{\rput(0,0.\n){\pscurve(-1,0)(0,-.2)(1,0)}}}
\rput(5.5,2.7){\psscaleboxto(.5,1){\}}} \rput[l](6,2.7){$y$}
\rput(5.5,-.8){\psscaleboxto(.5,1){\}}} \rput[l](6,-.8){$y+1$}
\put(3.5,0){$-k$}
\end{pspicture}\qquad\qquad
\begin{pspicture}(-1,-2)(9,4) 
\vvput{-0.3}{1a} \vvput{0.3}{1b} \pig{0}
  \vvput{1.7}{2a} \vvput{2.3}{2b} \pig{2} \vvput{3.7}{3a} \vvput{4.3}{3b}
  \pig{4} \vvput{5.7}{4a} \vvput{6.3}{4b} \pig{6} \vvput{7.7}{5a}
  \vvput{8.3}{5b}\pig{8} 
\psline[linestyle=dotted](4,-1.7)(4,3.7)
\rput(4,2.5){\multido{\n=0+2}{4}{\rput(0,0.\n){\pscurve(-1,0)(0,.2)(1,0)}}}
\rput(4,-1){\multido{\n=0+2}{3}{\rput(0,0.\n){\pscurve(-1,0)(0,-.2)(1,0)}}}
\rput(5.5,2.7){\psscaleboxto(.5,1){\}}} \rput[l](6,2.7){$x+1$}
\rput(5.5,-.8){\psscaleboxto(.5,1){\}}} \rput[l](6,-.8){$x$}
\put(4,0){$k$}
\end{pspicture}
\]
\end{lem}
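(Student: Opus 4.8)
The plan is to reuse, essentially verbatim, the half-arc bookkeeping from the proof of Lemma~\ref{lem:divide}; the only genuine novelty is keeping track of which side of the dividing line the first and second vertices of the \emph{split} pignose fall on, which now depends on the sign of its label. As in that proof, in the full pignose diagram of $\pi\in B_n$ every vertex carries a half-arc pointing up or down according to whether the arc leaving it lies above or below the horizontal line. I fix a vertical line through the middle of the pignose labeled $-k$ (the case of the pignose labeled $k$ being handled symmetrically), and I let $a$ (resp.\ $b$) be the number of first vertices lying to the left of the line whose half-arc points up (resp.\ down), and $c$ (resp.\ $d$) the analogous counts among the second vertices to the left of the line.

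The first step is the counting identity replacing ``$a+b=c+d+1$'' of Lemma~\ref{lem:divide}. The pignoses labeled $-n,\dots,-(k+1)$ lie entirely to the left of the line and contribute equally many first and second vertices, while the pignose labeled $-k$ is cut by the line; since its label is negative, its first vertex is the right one (to the right of the line) and its second vertex is the left one (to the left of the line). Hence $a+b=n-k$ and $c+d=n-k+1$, so $a+b=c+d-1$. (For the pignose labeled $k$, the split pignose has positive label, so it contributes its first vertex to the left and its second vertex to the right, recovering $a+b=c+d+1$ exactly as before.)

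The second step copies the remainder of the proof of Lemma~\ref{lem:divide}. I would observe that an upper arc meeting the line has exactly one endpoint to the left, and --- because an arc is drawn above precisely when its first vertex lies to the left of its second vertex along the horizontal line --- that endpoint is its first vertex; dually a lower arc meeting the line has its second vertex on the left and its first vertex on the right. Thus, writing $E_u$ and $E_l$ for the numbers of upper and lower arcs contained entirely to the left of the line, and $y$ and $x$ for the numbers of upper and lower arcs meeting the line, one gets $a=E_u+y$, $c=E_u$, $d=E_l+x$, $b=E_l$, hence $y=a-c$ and $x=d-b$. Therefore
\[
y-x=(a-c)-(d-b)=(a+b)-(c+d)=-1
\]
for the pignose labeled $-k$, and $y-x=+1$ for the pignose labeled $k$, which is exactly the assertion.

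The one point requiring care --- and presumably the reason the authors only say ``the same argument'' --- is a fixed point $\pi(k)=k$: then both the pignose labeled $k$ and the pignose labeled $-k$ carry a small loop, and the nominal rule ``above iff $\pi(i)\ge i$'' would make the loop at $-k$ an upper arc, which would spoil the count. The resolution is that for a loop at a negatively labeled pignose the first vertex (on the right) is joined to the second vertex (on the left), so under the ``first vertex left of second vertex'' reading of the drawing rule the loop is a \emph{lower} arc; with this understood it is counted correctly among the lower arcs meeting the line at $-k$ and the relation $y=a-c$ is undisturbed. After fixing this convention the argument is word-for-word that of Lemma~\ref{lem:divide}, so I would present it only as a short remark.
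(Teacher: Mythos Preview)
Your proof is correct and takes essentially the same approach as the paper, whose proof of this lemma is the single sentence that it ``can be shown by the same argument in the proof of Lemma~\ref{lem:divide}''; you have spelled out that argument in full, with the one new observation that the split pignose labeled $-k$ contributes its \emph{second} (left) vertex to the left of the dividing line, which flips the sign in the counting identity to $a+b=c+d-1$. Your handling of the fixed-point edge case is more careful than the paper's own text (whose drawing rule is indeed ambiguous for a loop at a negatively labeled pignose), and your resolution via the ``first vertex left of second vertex'' criterion is the right one, consistent with the $180^\circ$ central symmetry of the full pignose diagram.
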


In a full pignose diagram, a \emph{positive half arc} (resp.~\emph{negative half
  arc}) is a half arc attached to a vertex of a pignose labeled with a positive
(resp.~negative) integer.  Note that the number of two arcs intersecting with
each other in the full pignose diagram of $\pi\in B_n$ is equal to $2\cro(\pi)$.

An \emph{alignment} of $\pi\in B_n$ is an unordered pair of arcs in the full
pignose diagram of $\pi$ whose relative locations are as shown in
\eqref{eq:6}. We denote by $\al(\pi)$ the number of alignments of $\pi$.

\begin{prop}\label{prop:corteelB}
For $\pi\in B_n$ with $\fwex(\pi)=k$, we have
\[ 2\cro(\pi) + \al(\pi) = n^2-2n+k.\]
\end{prop}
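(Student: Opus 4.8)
The strategy is to mimic the proof of Proposition~\ref{prop:corteel}, now working inside the \emph{full} pignose diagram of $\pi\in B_n$, which has $2n$ pignoses and $2n$ arcs. First I would record the arc counts: since $\fwex(\pi)=k$ counts the vertices carrying an upper half arc among the $n$ positive pignoses (equivalently, $k$ is twice the number of positive upper arcs plus the number of spiral arcs, read in the ordinary pignose diagram), a short bookkeeping argument using the $i\leftrightarrow -i$ symmetry of the full diagram shows that among the $2n$ arcs exactly $2n-k$ are upper arcs and $k$ are lower arcs. (The spiral arcs of the ordinary diagram become one upper and one lower arc in the full diagram via the doubling, so one has to be slightly careful, but the count is forced by Lemma~\ref{lem:divide_B} applied at the two extreme pignoses together with the negation symmetry.)

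Next, following the template of Proposition~\ref{prop:corteel}, let $A$ be the set of all ordered pairs $(U,L)$ with $U$ an upper arc and $L$ a lower arc, so $|A|=(2n-k)\cdot k$, and split $A=A_1\uplus A_2\uplus A_3$ exactly as in that proof according to the three possible relative positions of a disjoint upper/lower pair, with $A_1$ the pairs that are either crossings or alignments and $A_2,A_3$ the two ``nested/linked'' types. Then, fixing an upper arc $U$ and invoking Lemma~\ref{lem:divide_B} at the pignose containing the right endpoint of $U$, I would match the pairs $(U,L)\in A_2$ with pairs of upper arcs in a fixed relative position (as in \eqref{eq:4}); fixing a lower arc $L$ and invoking Lemma~\ref{lem:divide_B} at the pignose at its right endpoint, I would match the pairs $(U,L)\in A_3$ with pairs of lower arcs in a fixed relative position (as in \eqref{eq:5}). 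The key difference from the type~$A$ case is that Lemma~\ref{lem:divide_B} gives a discrepancy of $+1$ at positive pignoses and $-1$ at negative ones, so the ``$+1$'' corrections do not all come out the same way: summing over all upper arcs $U$, the corrections over the $n$ positive pignoses and the $n$ negative pignoses must be combined, and likewise for lower arcs. This is precisely where the term $n^2-2n+k$ (rather than the naive $(k-1)(2n-k)$) comes from.

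Concretely, after the two Lemma~\ref{lem:divide_B} matchings one finds that $|A_2|$ equals the number of ordered pairs of upper arcs in the relevant position plus a correction equal to (number of positive upper arcs) minus (number of negative upper arcs), and similarly $|A_3|$ equals the number of ordered pairs of lower arcs in the relevant position plus a correction equal to (number of negative lower arcs) minus (number of positive lower arcs). Since every crossing or alignment is counted once in $A_1$ or among these fixed-position pairs of like-signed arcs, and each pair of like-signed arcs in that position is counted once, one gets $2\cro(\pi)+\al(\pi)=|A_1|+|A_2|+|A_3|-(\text{number of lower arcs})+(\text{signed correction terms})$. Plugging in $|A|=k(2n-k)$, the number of lower arcs $=k$, and expressing the signed corrections in terms of $n$ and the upper/lower arc counts (using that upper arcs total $2n-k$ and that the negation symmetry forces the positive and negative halves to be balanced in a predictable way), the whole expression collapses to $n^2-2n+k$.

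\textbf{Main obstacle.} The routine part is the $A=A_1\uplus A_2\uplus A_3$ decomposition, which is identical to Proposition~\ref{prop:corteel}. The delicate point is the careful accounting of the $\pm1$ corrections from Lemma~\ref{lem:divide_B} across positive versus negative pignoses, and making sure the fixed-position pairs of like-signed arcs are counted with the correct multiplicity once both the ``$U$ fixed'' and ``$L$ fixed'' sums are assembled; getting the signs and the double-counting right is what produces the exact constant $n^2-2n+k$, and that is where I would spend the most care.
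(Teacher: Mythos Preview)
Your overall strategy—mimic the proof of Proposition~\ref{prop:corteel} on the full pignose diagram and replace Lemma~\ref{lem:divide} by Lemma~\ref{lem:divide_B}—is exactly what the paper does. But your arc count is wrong, and this error propagates through the rest of the sketch.

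You assert that among the $2n$ arcs there are $2n-k$ upper arcs and $k$ lower arcs, giving $|A|=k(2n-k)$. In fact the full pignose diagram always has exactly $n$ upper arcs and $n$ lower arcs, so $|A|=n^2$. The reason is the $i\leftrightarrow -i$ symmetry together with the first/second vertex convention for negative pignoses: for each $j\in[n]$, exactly one of the arcs based at $j$ and at $-j$ is upper and the other is lower (check the fixed-point case $\pi(j)=j$ carefully—the arc at $-j$ is a \emph{lower} loop because the first vertex of a negative pignose is its right vertex). A quick sanity check already kills your count: for $n=1$ and $\pi=\mathrm{id}$ one has $k=\fwex(\pi)=2$, so your $|A|=k(2n-k)=0$, whereas the formula demands $2\cro+\al=n^2-2n+k=1$; no amount of ``signed corrections'' recovers $1$ from $0$. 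With the correct $|A|=n^2$ the pair $(U,L)$ in this example is the single alignment.

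Once you fix $|A|=n^2$, the decomposition $A=A_1\uplus A_2\uplus A_3$ and the two applications of Lemma~\ref{lem:divide_B} go through as you outline, and the $\pm1$ discrepancies between positive and negative pignoses do indeed produce the correction term. The paper packages this correction more cleanly than your ``signed correction terms'': it shows directly that $2\cro(\pi)+\al(\pi)=n^2-a-b$, where $a$ is the number of negative upper half arcs of a specific shape and $b$ the number of positive lower half arcs of the complementary shape; then the $i\leftrightarrow -i$ symmetry converts $a$ into a count of positive lower half arcs, so that $a+b$ is simply the total number of positive lower half arcs. Since the positive upper half arcs number $\fwex(\pi)=k$, one gets $a+b=2n-k$, hence $n^2-(2n-k)$. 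I recommend aiming for this formulation rather than tracking separate corrections over positive and negative upper and lower arcs.
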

\begin{proof}
  Since the number of positive upper half arcs is equal to $\fwex(\pi)=k$, the
  number of positive lower half arcs is equal to $2n-k$.

Let $A$ be the set of pairs $(U,L)$ of an upper arc $U$ and a lower arc $L$.
Since we have $n$ upper arcs and $n$ lower arcs in total, there are $n^2$
elements in $A$. Using Lemma~\ref{lem:divide_B} and the same argument as in the
proof of Proposition~\ref{prop:corteel}, one can easily see that
\[ 2\cro(\pi)+\al(\pi) = n^2 -a - b,\]
where $a$ is the number of negative upper half arcs of the form
\[
\begin{pspicture}(0.5,.5)(2,2)
\vvput{1.2}{1} \vvput{1.8}{1}\pig{1.5}
\rput(-.6,0){\psline(1.8,1)(1.2,1.8)}
\end{pspicture},
\]
and $b$ is the number of positive lower half arcs of the form
\[
\begin{pspicture}(0.5,0)(2,1.5)
\vvput{1.2}{1} \vvput{1.8}{1}\pig{1.5}
\rput(-.6,0){\psline(1.8,1)(1.2,0.2)}
\end{pspicture}.
\]
By the symmetry $a$ is equal to the number of positive lower half arcs of the
following form. 
\[
\begin{pspicture}(1,0)(2.5,1.5)
\vvput{1.2}{1} \vvput{1.8}{1}\pig{1.5}
\psline(1.8,1)(2.4,0.2)
\end{pspicture}
\]
Thus $a+b$ is the number of positive half arcs, which is $2n-k$, and we obtain
the desired formula.
\end{proof}

\section{Solutions of the Matrix Ansatz}
\label{MA}

Although it might be easy to check that some matrices satisfy the Matrix Ansatz \eqref{ansatz}, it is not obvious how to find 
explicitly such matrices.
We provide two solutions in the form of semi-infinite tridiagonal matrices. 
They can be obtained using the following observation: if $X$ and $Y$ are such that $XY-qYX=I$ (where $I$ is the identity), 
then $D=X(I+Y)$ and $E=YX(I+Y)$ satisfy 
\begin{align*}
DE-qED &=X(I+Y)YX(I+Y)-qYX(I+Y)X(I+Y) \\
       &=(XY+XYY-qYX-qYXY)X(I+Y) = (I+Y)X(I+Y) = D+E.
\end{align*}
Then we can look for $\bra{W}$ (respectively, $\ket{V}$) as a left (respectively, right) eigenvector
of $ytD-E$ (respectively, $D$).

\subsection{Solution 1}
Let $X=(X_{i,j})_{i,j\geq0}$ and $Y=(Y_{i,j})_{i,j\geq0}$ where 
\begin{align*}
  X_{i,i+1}=[i+1]_q \text{ and } X_{i,j}=0 \;\text{ otherwise,} \qquad 
  Y_{i+1,i}=1, \; Y_{i,i}=tyq^i    \text{ and } Y_{i,j}=0 \;\text{ otherwise,}
\end{align*}
and
\[ 
  \bra{W}=(1,0,0,\ldots),\qquad \ket{V}=(1,0,0,\ldots )^{\tr}.
\]
We can check that $XY-qYX=I$, and that $D=X(I+Y)$ and $E=YX(I+Y)$ together with $\bra{W}$ and $\ket{V}$ 
provide a solution of \eqref{ansatz}. The coefficients are:
\begin{align}
D_{i,i}= [i+1]_q, \qquad D_{i,i+1} = (1+ytq^{i+1})[i+1]_q, \qquad D_{i,j}=0 \text{ otherwise,}
\end{align}
and
\begin{align}
E_{i,i}= (1+ytq^i)[i]_q  +  ytq^i[i+1]_q, \quad E_{i,i+1} = ytq^i(1+ytq^{i+1})[i+1]_q, \quad E_{i+1,i}= [i+1]_q,
\end{align}
and $E_{i,j}=0$ otherwise.

Then, $y^2D+E$ can be seen as a transfer matrix for ``walks'' in the nonnegative integers. 
Since the matrix is tridiagonal and because of the particular choice of $\bra{W}$ and $\ket{V}$,
this shows that $B_n(y,t,q)$ count some weighted Motzkin paths of length $n$ (see the next section for 
more on the combinatorics of these paths). 
By standard methods, this gives a continued fraction for the generating function:

\begin{thm}[Theorem~\ref{bnfrac} in the introduction] Let
$\gamma_h  = y^2[h+1]_q + [h]_q + ytq^h([h]_q+[h+1]_q)$ and 
$\lambda_h = y[h]_q^2(y+tq^{h-1})(1+ytq^h)$, then we have:

\[
  \sum_{n\geq0} B_n(y,t,q) z^n = 
  \frac{1}{1-\gamma_0z} \cmo \frac{\lambda_1z^2}{1-\gamma_1z} \cmo \frac{\lambda_2z^2}{1-\gamma_2z} \cmo \cdots.
\]
\end{thm}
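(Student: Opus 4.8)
The plan is to derive the continued fraction directly from the tridiagonal structure of the transfer matrix $y^2D+E$ together with the specific boundary vectors $\bra W=(1,0,0,\ldots)$ and $\ket V=(1,0,0,\ldots)^{\tr}$, using the classical theory relating J-fractions to weighted Motzkin paths (Flajolet, see also \cite{Vie}). Writing $M = y^2 D + E$, from the explicit formulas for $D$ and $E$ computed above, $M$ is tridiagonal with entries $M_{i,i}$ on the diagonal, $M_{i,i+1}$ on the superdiagonal, and $M_{i+1,i}$ on the subdiagonal; I would first record these three sequences explicitly. Concretely, $M_{i,i} = y^2[i+1]_q + (1+ytq^i)[i]_q + ytq^i[i+1]_q$, which simplifies to $\gamma_i$ after combining the two terms $y^2[i+1]_q + ytq^i[i+1]_q$ with $[i]_q(1+ytq^i)$; I would verify this matches $\gamma_h = y^2[h+1]_q + [h]_q + ytq^h([h]_q+[h+1]_q)$. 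Likewise $M_{i,i+1} = y^2(1+ytq^{i+1})[i+1]_q + ytq^i(1+ytq^{i+1})[i+1]_q = (1+ytq^{i+1})[i+1]_q(y^2+ytq^i)$ and $M_{i+1,i} = [i+1]_q$, so the product of the off-diagonal entries "across" level $h$ is $M_{h-1,h}\,M_{h,h-1} = (1+ytq^h)[h]_q(y^2+ytq^{h-1})\cdot[h]_q = y[h]_q^2(y+tq^{h-1})(1+ytq^h) = \lambda_h$, which is exactly the stated $\lambda_h$.

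Next I would invoke the standard fact that for a tridiagonal matrix $M$ with $\bra W, \ket V$ as above, the generating function $\sum_{n\ge 0}\bra W M^n\ket V z^n$ equals the J-fraction
\[
\cfrac{1}{1-\gamma_0 z-\cfrac{\lambda_1 z^2}{1-\gamma_1 z - \cfrac{\lambda_2 z^2}{1-\gamma_2 z - \cdots}}}
\]
where $\gamma_h = M_{h,h}$ and $\lambda_h = M_{h-1,h}M_{h,h-1}$. This is the combinatorial theory of Motzkin paths: $\bra W M^n \ket V$ counts lattice paths from level $0$ to level $0$ of length $n$ using up-steps, down-steps, and level-steps, where a level-step at height $h$ carries weight $M_{h,h}$, an up-step from $h$ to $h+1$ carries weight $M_{h,h+1}$, and a down-step from $h+1$ to $h$ carries weight $M_{h+1,h}$; grouping each up-step with its matching down-step gives the weight $\lambda_h$ on the level-$h$ branch of the continued fraction. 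By the previous subsection $\bra W M^n\ket V = B_n(y,t,q)$, so this yields the claimed identity.

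The computations are entirely routine once the $q$-integer identities $[h+1]_q = q[h]_q + 1$ are used to combine terms, and the only genuine content is bookkeeping the simplifications of $\gamma_h$ and $\lambda_h$; I would present these as a short lemma or inline verification. The main (and rather mild) obstacle is purely notational: being careful that the transfer-matrix indexing convention ($\bra W M^n \ket V$ versus $\ket V M^n \bra W$, and which of $M_{i,i+1}$, $M_{i+1,i}$ is "up") matches the orientation implicit in the Matrix Ansatz $B_n = \bra W (y^2D+E)^n\ket V$, so that the product $\lambda_h$ uses the correct pair of off-diagonal entries. Since $\lambda_h$ only ever appears as the product $M_{h-1,h}M_{h,h-1}$, this ambiguity does not affect the final formula, and the verification that $M_{h-1,h}M_{h,h-1}=\lambda_h$ and $M_{h,h}=\gamma_h$ completes the proof.
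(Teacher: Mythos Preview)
Your proposal is correct and follows essentially the same approach as the paper: the paper's proof is the one-line remark that it suffices to check $\gamma_h=y^2D_{h,h}+E_{h,h}$ and $\lambda_h=(y^2D_{h-1,h}+E_{h-1,h})E_{h,h-1}$, having already noted that the tridiagonal shape of $y^2D+E$ together with $\bra W=(1,0,\ldots)$ and $\ket V=(1,0,\ldots)^{\tr}$ yields a J-fraction by the standard Motzkin-path/transfer-matrix argument. Your write-up simply spells out these verifications and the underlying path interpretation in more detail.
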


\begin{proof}
 It suffices to check $\gamma_h=y^2D_{h,h}+E_{h,h}$ and $\lambda_h=(y^2D_{h,h+1}+E_{h,h+1})E_{h+1,h}$.
\end{proof}

In particular, we have
\begin{align*}
  \sum_{n\geq0} B_n(y,1,-1) z^n = \frac{1}{1-(y+y^2)z- \frac{1-y(y+1)(1-y)z^2}{1-(1-y)z}}
   = \frac{1-z+zy}{1-z-y^2z}
   = 1+ \frac{(y+y^2)z}{1-(1+y)z}.
\end{align*}
It is then easy to obtain 
\[
  B_n(y,1,-1) = \sum_{k=1}^{2n} y^k \binom{n-1}{\lceil k/2 \rceil -1}.
\]

Using \eqref{eq:7}, we thus obtain as mentioned in Theorem~\ref{qeulrianb} that the value at $q=-1$ of the $q$-Eulerian numbers 
of type $B$ are binomial coefficients:
\begin{equation}
   E_{n,k}^B(-1) = [y^{2k}]B_n(y,1,-1) + [y^{2k+1}]B_n(y,1,-1) = \binom{n-1}{k-1} + \binom{n-1}{k} = \binom{n}{k}.
\end{equation}

\subsection{Solution 2}
Let $X=(X_{i,j})_{i,j\geq0}$ and $Y=(Y_{i,j})_{i,j\geq0}$ where 
\begin{align*}
  X_{i,i+1}=[i+1]_q  \text{ and } X_{i,j}=0 \text{ otherwise,} \qquad 
  Y_{i+1,i}=1   \text{ and } Y_{i,j}=0 \text{ otherwise,}
\end{align*}
and
\[ 
  \bra{W}=(1,yt,(yt)^2,\ldots),\qquad \ket{V}=(1,0,0,\ldots )^{\tr}.
\]
We can check that $XY-qYX=I$, and that $D=X(I+Y)$ and $E=YX(I+Y)$ together with $\bra{W}$ and $\ket{V}$ provide a solution 
of \eqref{ansatz}. The coefficients are:
\[
D_{i,i}=D_{i,i+1}=[i+1]_q,\qquad D_{i,j}=0\ {\rm otherwise},
\]
\begin{align*}
E_{i,i-1}=E_{i,i}=[i]_q, \qquad E_{i,j}=0\ {\rm otherwise}.
\end{align*}

\begin{thm}[Theorem~\ref{bnrec} in the introduction] We have $B_0=1$, and the recurrence relation:
\begin{equation} \label{An_rec}
   B_{n+1} (y,t,q) =  (y+t) D_q\big[   (1+yt) B_{n}(y,t,q) \big],
\end{equation}
where $D_q$ is the $q$-derivative with respect to $t$, which sends $t^n$ to $[n]_qt^{n-1}$.
\end{thm}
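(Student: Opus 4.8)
The plan is to combine the Matrix Ansatz with the explicit tridiagonal matrices of Solution~2, exploiting the fact that in that solution the variable $t$ enters \emph{only} through $\bra W=(1,yt,(yt)^2,\dots)$. Writing $v_n=(y^2D+E)^n\ket V$ for the column vector after $n$ steps (so $v_0=\ket V=(1,0,0,\dots)^{\tr}$ and $v_{n+1}=(y^2D+E)v_n$), the Ansatz gives $B_n(y,t,q)=\bra W v_n=\sum_{i\ge0}(v_n)_i(yt)^i$. I would therefore introduce the auxiliary polynomial $G_n(u)=\sum_{i\ge0}(v_n)_i u^i$, so that $B_n(y,t,q)=G_n(yt)$, and reduce the theorem to the clean functional recursion
\[
G_{n+1}(u)=(y^2+u)\,D_q\!\big[(1+u)G_n(u)\big],
\]
where $D_q$ is the $q$-derivative in the variable $u$, followed by the substitution $u=yt$.

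To establish this recursion, first I would read off from Solution~2 that the only nonzero entries of $M:=y^2D+E$ are $M_{i,i-1}=[i]_q$, $M_{i,i}=y^2[i+1]_q+[i]_q$, and $M_{i,i+1}=y^2[i+1]_q$ (the boundary being harmless, since $M_{0,-1}=[0]_q=0$), so that $v_{n+1}=Mv_n$ reads
\[
(v_{n+1})_i=[i]_q(v_n)_{i-1}+\big(y^2[i+1]_q+[i]_q\big)(v_n)_i+y^2[i+1]_q(v_n)_{i+1}
\]
for all $i\ge0$. Multiplying by $u^i$, summing over $i$, and re-indexing the three families of terms, the subdiagonal terms contribute $u\,D_q[uG_n(u)]$, the diagonal terms contribute $y^2D_q[uG_n(u)]+u\,D_qG_n(u)$, and the superdiagonal terms contribute $y^2D_qG_n(u)$; here I repeatedly use $D_q(u^{j+1})=[j+1]_qu^{j}$ and $u\,D_q(u^j)=[j]_qu^j$. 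Adding these and invoking linearity of $D_q$ gives $G_{n+1}(u)=(u+y^2)\big(D_q[uG_n]+D_q[G_n]\big)=(u+y^2)D_q[(1+u)G_n(u)]$, as needed.

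Finally I would transfer this to $B_n$ via the elementary $q$-chain rule: for a polynomial $f$, if $h(t)=f(yt)$ then $D_q h(t)=y\,(D_q f)(yt)$, both sides equalling $\tfrac{f(qyt)-f(yt)}{t(q-1)}$. Applied with $f(u)=(1+u)G_n(u)$ and $B_n(y,t,q)=G_n(yt)$, this yields $D_q[(1+yt)B_n(y,t,q)]=y\big(D_q[(1+u)G_n(u)]\big)\big|_{u=yt}$, while substituting $u=yt$ into the functional recursion gives $B_{n+1}(y,t,q)=(y^2+yt)\big(D_q[(1+u)G_n(u)]\big)\big|_{u=yt}=y(y+t)\big(D_q[(1+u)G_n(u)]\big)\big|_{u=yt}$; comparing the two yields $B_{n+1}=(y+t)D_q[(1+yt)B_n]$, and $B_0=\bra W\ket V=1$ handles the base case. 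I expect the only genuine obstacle to be the index bookkeeping in the middle step — matching each shift in the tridiagonal recursion to the operators $D_q[uG_n]$ and $D_qG_n$ and checking that the $i=0$ term really drops out because $[0]_q=0$; a quick verification that $G_1(u)=u+y^2$ and $G_2(u)=(u+y^2)\big(1+y^2+(1+q)u\big)$ reproduce the listed values of $B_1$ and $B_2$ is a cheap guard against an index or sign slip.
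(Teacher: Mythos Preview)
Your proof is correct and follows essentially the same idea as the paper: both use Solution~2 of the Matrix Ansatz and the identification of column vectors with polynomials in the basis $1,yt,(yt)^2,\dots$ (your $G_n$ is precisely that identification). The paper's write-up is a bit slicker because it exploits the factorization $y^2D+E=(y^2I+Y)X(I+Y)$ and observes directly that $Y$ acts as multiplication by $yt$ and $X$ as $y^{-1}D_q$; this yields the operator $(y+t)D_q\big[(1+yt)\,\cdot\,\big]$ in one stroke, so your explicit tridiagonal computation and the auxiliary chain-rule passage through the variable $u$ become unnecessary.
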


\begin{proof}
The choice of $\bra{W}$ leads us to consider a basis for polynomials in $t$ which is $1,yt,(yt)^2,\dots$
Then $v\mapsto \bra{W}v$ is just the realization of the identification between column vectors and polynomials in $t$.
In this basis, 
$Y$ is the matrix of multiplication by $yt$, and $X$ is the matrix of $y^{-1} D_q$. We have:
\[
  B_n(y,t,q) = \bra{W}(y^2D+E)^n \ket{V} = \bra{W}\big( (y^2I+Y)X(I+Y) \big)^n \ket{V}
\]
and the result follows. 
\end{proof}

In this case, seeing $y^2D+E$ as a transfer matrix shows that $B_n(y,t,q)$ counts some Motzkin suffixes. 

\begin{defn}
A {\it Motzkin suffix} of length $n$ and starting height $k$ is a path in $\mathbb{N}^2$ from 
$(0,k)$ to $(n,0)$ with steps $(1,1)$, $(1,0)$ and $(1,-1)$, respectively denoted $\nearrow$, $\rightarrow$ and $\searrow$.
The Motzkin {\it paths} are the particular cases where the starting height is 0. We denote by $\sh(p)$ the starting height
of the path $p$.
\end{defn}

Then, expanding $\bra{W}(y^2D+E)^n\ket{V}$ shows that $B_n(y,t,q)$ is the generating function of Motzkin suffixes $p$ with a weight
$(yt)^{\sh(p)}$, and weights $y^2D_{h,h+1}$ for a step $\nearrow$ from height $h$ to $h+1$, $y^2D_{h,h}+E_{h,h}$ for a step 
$\rightarrow$ at height $h$, and $E_{h+1,h}$ for a step $\searrow$ from height $h+1$ to $h$
(see the next section for more on the combinatorics of these paths). 

\section{\texorpdfstring{Interpretation of $B_n(y,t,q)$ via labeled Motzkin paths}{Interpretation of Bn(y,t,q) via labeled Motzkin paths}}
\label{paths}

We show here that some known bijections from \cite{Corteel2007} can be adapted to the case of signed
permutations. In this reference, the first author obtains refinements of two bijections originally given by
Fran\c con and Viennot \cite{FV}, Foata and Zeilberger \cite{FZ}. In the case of signed permutations, we will see that each of these
two bijections has two variants, corresponding to the two kinds of paths obtained in the previous section.

The two variants of the Fran\c con-Viennot bijection give new interpretations $B_n(y,t,q)$ in terms ascent-like and
pattern-like statistics in signed permutations. On the other side, 
the two variants of the Foata-Zeilberger bijection permit to recover the known interpretation as in \eqref{bnfwexcro}
(so we omit details in this case).

\subsection{Labeled Motzkin paths}

Let $\mathcal{M}_n$ be the set of weighted Motzkin paths of length $n$, where each step is either:
\begin{itemize}
 \item $\nearrow$ from height $h$ to $h+1$ with weight $q^i$, $0\leq i\leq h$ (type 1),
 \item $\nearrow$ from height $h$ to $h+1$ with weight $ytq^{h+1+i}$, $0\leq i\leq h$ (type 2),
 \item $\rightarrow$ at height $h$ with weight $y^2q^i$, $0\leq i\leq h$ (type 3),
 \item $\rightarrow$ at height $h$ with weight $ytq^{h+i}$, $0\leq i\leq h$ (type 4),
 \item $\rightarrow$ at height $h$ with weight $q^{i}$, $0\leq i\leq h-1$ (type 5),
 \item $\rightarrow$ at height $h$ with weight $ytq^{h+i}$, $0\leq i\leq h-1$ (type 6),
 \item $\searrow$ from height $h+1$ to $h$ with weight $y^2q^i$, $0\leq i\leq h$ (type 7),
 \item $\searrow$ from height $h+1$ to $h$ with weight $ytq^{h+i}$, $0\leq i\leq h$ (type 8).
\end{itemize}
This set has generating function $B_n(y,t,q)$ since the weights correspond to coefficients of $y^2D+E$,
where we use the first solution of the Matrix Ansatz from the previous section. More precisely, the correspondence is: 
$E_{h,h-1}\to$ types 1 and 2, $D_{h,h}\to$ type 3, $E_{h,h}\to$ types 4, 5 and 6, 
$D_{h,h+1}\to$ type 7, $E_{h,h+1}\to$ type 8.
Note that the weight on a
step does not always determine its type (compare type 4 and 6), so that we need to think in terms of
labeled paths where each step has a label between 1 and 8 to indicate its type.

\subsubsection{The Fran\c con-Viennot bijection, first variant}

There is a bijection between $S_n$ and weighted Motzkin paths which follows the
number of descents and the number of 31-2 patterns in a permutation
\cite{Corteel2007}. The path is obtained by ``scanning'' the graph of a
permutation from bottom to top, {\it i.e.} the $i$th step is obtained by
examining $\sigma^{-1}(i)$.  In the case of a signed permutation $\pi$, we use
the representation as in Figure~\ref{fv1}: we place a $+$ or a $-$ in the square
in the $i$th column and $|\pi_i|$th row depending on the sign of $\pi(i)$. 

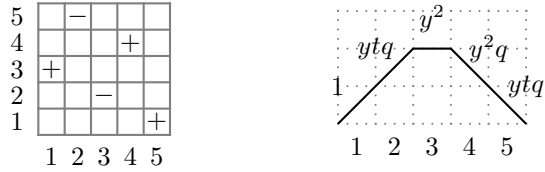
\begin{figure}[h!tp]\psset{unit=3.5mm}
\begin{pspicture}(-1,-1)(5,5)
\psgrid[gridcolor=gray,griddots=0,subgriddiv=0,gridlabels=0](0,0)(5,5)
\rput(0.5,2.5){$+$}\rput(1.5,4.5){$-$}\rput(2.5,1.5){$-$}\rput(3.5,3.5){$+$}\rput(4.5,0.5){$+$}
\rput(-0.8,0.5){1}
\rput(-0.8,1.5){2}
\rput(-0.8,2.5){3}
\rput(-0.8,3.5){4}
\rput(-0.8,4.5){5}
\rput(0.5,-0.8){1}
\rput(1.5,-0.8){2}
\rput(2.5,-0.8){3}
\rput(3.5,-0.8){4}
\rput(4.5,-0.8){5}
\end{pspicture}
\hspace{2cm} \psset{unit=5mm}
\begin{pspicture}(0,-1)(5,3)
   \psgrid[gridcolor=gray,griddots=4,subgriddiv=0,gridlabels=0](0,0)(5,3)
   \psline(0,0)(2,2)(3,2)(5,0)
  \rput(0.5,-0.6){1} \rput(1.5,-0.6){2} \rput(2.5,-0.6){3} \rput(3.5,-0.6){4} \rput(4.5,-0.6){5}
  \rput(0,1){$1$} \rput(1,2){$ytq $} \rput(2.5,2.8){$y^2$} \rput(4,2){$y^2q$} \rput(5,1){$ytq$}
\end{pspicture}
\caption{The first variant the Fran\c con-Viennot bijection, with the signed
  permutation $\pi=3,-5,-2,4,1$. }
\label{fv1}
\end{figure}

\begin{defn} For any $\sigma\in S_n$ and $1\leq i\leq n$,
   let $\mot(\sigma,i) = \#\{ \; j \;  : \;  1\leq j<i-1 \hbox{ and } \pi_j>\pi_i>\pi_{j+1}  \}$, and let 
   $\mott(\sigma,i) = \#\{ \; j \;  : \;  i<j<n \hbox{ and } \pi_j>\pi_i>\pi_{j+1}  \}$.
   Let also $\mot(\sigma) = \sum_{i=1}^n \mot(\sigma,i) $.
\end{defn}

Let $\pi\in B_n$. We denote by $|\pi|$ the permutation obtained by removing the
negative signs of $\pi$, that is, $|\pi|_i = |\pi_i|$ for $i=1,2,\dots,n$.  We
take the convention that $\pi_0=0$ and $\pi_{n+1}=n+1$.  The bijection is
defined in the following way. The path corresponding to $\pi$ is of length $n$
such that, if $j=|\pi_i|$ and denoting $s$ the $j$th step, we have:
\begin{itemize}
 \item If $|\pi_{i-1}|>|\pi_i|<|\pi_{i+1}|$ and $\pi_i>0$, then $s$ is of type 1 with weight $q^{\mot(|\pi|,i) }$.
 \item If $|\pi_{i-1}|>|\pi_i|<|\pi_{i+1}|$ and $\pi_i<0$, then $s$ is of type 2 with weight $ytq^{h+1+\mot(|\pi|,i) }$.
 \item If $|\pi_{i-1}|<|\pi_i|<|\pi_{i+1}|$ and $\pi_i>0$, then $s$ is of type 3 with weight $y^2q^{\mot(|\pi|,i) }$.
 \item If $|\pi_{i-1}|<|\pi_i|<|\pi_{i+1}|$ and $\pi_i<0$, then $s$ is of type 4 with weight $ytq^{h+\mot(|\pi|,i) }$.
 \item If $|\pi_{i-1}|>|\pi_i|>|\pi_{i+1}|$ and $\pi_i>0$, then $s$ is of type 5 with weight $q^{\mot(|\pi|,i) }$.
 \item If $|\pi_{i-1}|>|\pi_i|>|\pi_{i+1}|$ and $\pi_i<0$, then $s$ is of type 6 with weight $ytq^{h+\mot(|\pi|,i) }$.
 \item If $|\pi_{i-1}|<|\pi_i|>|\pi_{i+1}|$ and $\pi_i>0$, then $s$ is of type 7 with weight $y^2q^{\mot(|\pi|,i) }$.
 \item If $|\pi_{i-1}|<|\pi_i|>|\pi_{i+1}|$ and $\pi_i<0$, then $s$ is of type 8 with weight $ytq^{h+\mot(|\pi|,i) }$.
\end{itemize}

In the case where $\pi$ has no negative entry, this defines a bijection with the
paths having steps of types 1, 3, 5, 7 only.
This is a result from \cite{Corteel2007}, and what we present here is a variant, so we refer to this work for
more details. In the case of signed permutations, since each entry can be negated, it is natural that
each step of type (1, 3, 5, or 7) has a respective variant (type 2, 4, 6, or 8). So the fact that the map is a bijection can
be deduced from the case of unsigned permutations. It is also clear that $t$ follows the number of negative entries
of $\pi$ since there is a factor $t$ only in steps of type 2, 4, 6, and 8.

Let us check what is the statistic followed by $y$. There is a factor $y$ on each step of type 2,4,6,8 so that this 
statistic is the sum of $\neg(\pi)$ and other terms coming from the factors $y^2$ in steps of type 
3 and 7. Note that the $j$th step is of type 3 or 7 if and only if $|\pi_{i-1}|<\pi_i$. This leads us to define an ascent
statistic $\hasc(\pi)$ as
\begin{equation} \label{defhasc}
   \hasc(\pi) = 2 \times \# \{\; i \; : \;   0 \leq i \leq n-1 \hbox{ and } |\pi_i| < \pi_{i+1}    \} + \neg(\pi).
\end{equation}

It remains only to check what is the statistics followed by $q$. Since there is always a weight $q^{\mot(|\pi|,i)}$
on the $i$th step, this statistic is the sum of $\mot(|\pi|)$ and other terms. It remains to take into account the weights
$q^h$ or $q^{h+1}$ that appear on each step of type 2,4,6,8. From the properties of the bijection in the unsigned case,
we have that the ``minimal'' height $h$ of the $i$th step is $\mot(|\pi|,i)+\mott(|\pi|,i)$, plus 1 in the case where
$|\pi_{i-1}|>|\pi_i|>|\pi_{i+1}|$. So, apart a factor $q$ on each step of type 2 and 6, we obtain the statistic
\[
   \mot(|\pi|) + \sum_{\substack{ 1\leq i \leq n \\ \pi_i<0 } } \big(  \mot(|\pi|,i) + \mott(|\pi|,i)    \big).
\]
Note that to take into account the factor $q$ on each step of type 2 and 6, we can count $i$ such that
$1\leq i<n$ and $|\pi_{i-1}|>-\pi_i>0 $. The statistic we eventually obtain can be rearranged as follows:
\begin{align*}
 \pat(\pi) = \; &  \# \{ \; (i,j) \; : \;   1\leq i<j \leq n, \hbox{ and } |\pi_i|>|\pi_j|>|\pi_{i+1}| \} \; + \\
            &  \# \{\; (i,j) \; : \;   1\leq i,j \leq n, \hbox{ and } |\pi_i|>-\pi_j \geq |\pi_{i+1}| \}.
\end{align*}
This is a ``pattern'' statistic that is somewhat similar to the $\mot$ statistic of the unsigned case
(and indeed identical if $\pi$ has only positive entries). Eventually, our first variant of the Fran\c con-Viennot
bijection gives the following.

\begin{prop}[First equality of Theorem~\ref{bndes} in the introduction]
\begin{equation}
 B_n(y,t,q) = \sum_{\pi \in B_n}  y^{\hasc(\pi)}    t^{\neg(\pi)} q^{ \pat (\pi) }.
\end{equation}
\end{prop}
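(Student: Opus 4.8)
The plan is to show that the rule defined just above, $\pi\mapsto p$, is a weight-preserving bijection from $B_n$ onto the set $\mathcal{M}_n$ of labeled Motzkin paths, where the weight of $\pi$ is $y^{\hasc(\pi)}t^{\neg(\pi)}q^{\pat(\pi)}$ and the weight of $p$ is the product of its step weights. Since $\mathcal{M}_n$ has generating function $B_n(y,t,q)$ (Section~\ref{MA}, first solution of the Matrix Ansatz), this yields the stated identity.

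First I would settle bijectivity. Ignoring signs, the assignment $i\mapsto j=|\pi_i|$ together with the comparison type of $(|\pi_{i-1}|,|\pi_i|,|\pi_{i+1}|)$ — using the conventions $\pi_0=0$, $\pi_{n+1}=n+1$ — is exactly the Fran\c con--Viennot-type construction of \cite{Corteel2007}, which is a bijection between $S_n$ and Motzkin paths with steps of types $1,3,5,7$. The sign of $\pi_i$ then selects, for the $j$th step, between a type-$(2k-1)$ step and its type-$2k$ variant, and these carry the same $q$-label range; hence $|\pi|$ is recovered from the underlying unsigned path of \cite{Corteel2007} and the signs from the parities of the step types. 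This already shows the exponent of $t$ is $\neg(\pi)$, since a factor $t$ occurs precisely on steps of types $2,4,6,8$. It also gives the exponent of $y$: a factor $y$ sits on every step of type $2,4,6,8$ (contributing $\neg(\pi)$) and a factor $y^2$ on every step of type $3$ or $7$, and the $j$th step is of type $3$ or $7$ iff $|\pi_{i-1}|<\pi_i$, so the exponent of $y$ is $2\cdot\#\{i:0\le i\le n-1,\ |\pi_i|<\pi_{i+1}\}+\neg(\pi)=\hasc(\pi)$ by \eqref{defhasc}.

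The remaining, and most delicate, step is to check that the exponent of $q$ equals $\pat(\pi)$. Each step contributes $q^{\mot(|\pi|,i)}$, for a total of $q^{\mot(|\pi|)}$; in addition, each step of type $2,4,6,8$ contributes $q^{h}$ or $q^{h+1}$ where $h$ is its height. From the properties of the unsigned bijection, the minimal height of the $j$th step is $\mot(|\pi|,i)+\mott(|\pi|,i)$, increased by $1$ exactly when $|\pi_{i-1}|>|\pi_i|>|\pi_{i+1}|$, and one must verify that the label ranges $0\le i\le h$, resp.\ $0\le i\le h-1$, in the definition of $\mathcal{M}_n$ are filled so that this minimal height is indeed attained by $\mot(|\pi|,i)$. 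The $q^{h}$/$q^{h+1}$ discrepancy between, e.g., type $2$ and type $4$, or between type $4$ and type $6$, must be matched by this $+1$ correction in the descent-bottom case, while the single extra $q$ on each step of type $2$ and $6$ is encoded by counting indices $i$ with $1\le i<n$ and $|\pi_{i-1}|>-\pi_i>0$. Assembling these contributions writes the exponent of $q$ as
\[
\mot(|\pi|)+\sum_{\substack{1\le i\le n\\ \pi_i<0}}\bigl(\mot(|\pi|,i)+\mott(|\pi|,i)\bigr)+\#\{i:1\le i<n,\ |\pi_{i-1}|>-\pi_i>0\},
\]
and re-indexing the double sums — expressing $\mot(|\pi|)$ as a sum over pairs $(i,j)$ with $i<j$ and $|\pi_i|>|\pi_j|>|\pi_{i+1}|$, and merging the negative-entry terms into a sum over pairs with $|\pi_i|>-\pi_j\ge|\pi_{i+1}|$ — turns this into $\pat(\pi)$, completing the proof. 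The main obstacle is precisely this last bookkeeping (matching heights, label ranges, and the off-by-one in the descent-bottom case), not any conceptual difficulty, since the bijection itself is inherited step-by-step from the unsigned case.
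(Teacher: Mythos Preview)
Your proposal is correct and follows essentially the same route as the paper: reduce bijectivity to the unsigned Fran\c con--Viennot bijection of \cite{Corteel2007}, read off the $t$- and $y$-exponents from the step types, and analyze the $q$-exponent via the minimal-height formula $\mot(|\pi|,i)+\mott(|\pi|,i)$ (plus $1$ in the double-descent case) together with the extra factor $q$ on steps of type~2 and~6, then rearrange into $\pat(\pi)$. The paper presents this argument as the discussion immediately preceding the proposition rather than as a separate proof environment, but the content is the same as what you outline.
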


Note that in the case $t=0$, we recover the known result:
\begin{equation} \label{enkmot}
 \sum_{k=1}^n y^k E_{n,k}(q) = \sum_{\sigma \in S_n} y^{\asc(\sigma)+1} q^{\mot(\sigma)}
\end{equation}
where $\asc(\sigma)=n-1-\des(\sigma)=\#\{ i : 1\leq i \leq n-1 \text{ and } \sigma_i<\sigma_{i+1}\}$.

\subsubsection{The Foata-Zeilberger bijection, first variant}

In case of unsigned permutations, this bijection follows the number of weak excedances and crossings,
see the bijection $\Psi_{FZ}$ in \cite{Corteel2007}. To extend it, we use a representation of a signed 
permutation as an arrow diagram (this is equivalent to the pignose diagrams used earlier: if $\sigma\in S_n$,
we put $n$ dots in a row, and draw an arrow from $i$ to $\sigma(i)$ which is above the axis if $i\leq\sigma(i)$ and
below otherwise, see \cite{Corteel2007}).
The idea is to draw the arrow diagram of $|\pi|$ and label an arrow 
from $i$ to $|\pi(i)|$ by $+$ or $-$ depending on the sign of $\pi(i)$, see Figure~\ref{fz2}.

\begin{figure}[h!tp] \psset{unit=8mm}
 \begin{pspicture}(1,-1.6)(5,1)
  \psdots(1,0)(2,0)(3,0)(4,0)(5,0)
    \pscurve{->}(1,0)(2,0.7)(3,0)  \pscurve{->}(2,0)(3,0.7)(4,0) \pscurve{->}(4,0)(4.5,0.3)(5,0)
   \pscurve{->}(3,0)(2.5,-0.3)(2,0) \pscurve{->}(5,0)(3,-1)(1,0)
   \rput(2,1){$+$} \rput(3,1){$-$} \rput(4.5,0.8){$+$} \rput(2.5,-0.5){$-$} \rput(3,-1.4){$+$}
  \rput(1,-0.3){\small 1} \rput(2,-0.3){\small 2} \rput(3,-0.3){\small 3} \rput(4,-0.3){\small 4} \rput(5,-0.3){\small 5}   
 \end{pspicture}
\hspace{2cm}    \psset{unit=6mm}
\begin{pspicture}(0,-0.8)(5,3)
   \psgrid[gridcolor=gray,griddots=4,subgriddiv=0,gridlabels=0](0,0)(5,3)
   \psline(0,0)(2,2)(3,1)(4,1)(5,0)
  \rput(0.5,-0.6){1} \rput(1.5,-0.6){2} \rput(2.5,-0.6){3} \rput(3.5,-0.6){4} \rput(4.5,-0.6){5}
  \rput(-0.6,0.7){$(1,y^2)$} \rput(-0.2,2){$(ytq^3,ytq)$} \rput(2.5,2.2){$1$} \rput(3.7,1.5){$y^2q$} \rput(5,0.6){$1$}
\end{pspicture}
\caption{The first variant of the Foata-Zeilberger bijection, with
  $\pi=3,-4,-2,5,1$.}
\label{fz2}
\end{figure}
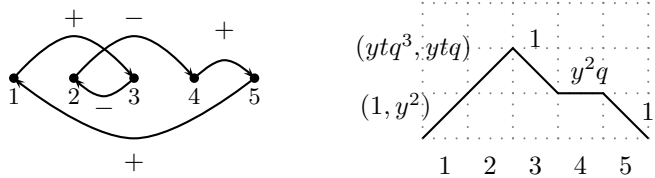

The crossings can be read in this representation as follows. The proof can be done by distinguishing all the possible
cases for the signs of $i$ and $j$ for each type of crossing $(i,j)$. We omit details.

\begin{prop}
 Each crossing $(i,j)$ in the signed permutation $\pi$ corresponds to one of the six
 configurations in Figure~\ref{confcross}, where $\pm$ means that the label of the arrow can be either $+$ or $-$,
 and the dots indicate that there might be an equality of the endpoint of an arrow and the startpoint of the other arrow.

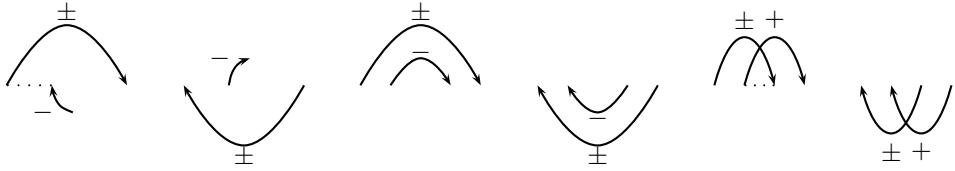
\begin{figure}[h!tp]
\psset{unit=4mm}
  \begin{pspicture}(0,-2.6)(4,2.9)
   \pscurve{->}(0,0)(2,2)(4,0)\rput(2,2.4){$\pm$} \pscurve{->}(2.2,-0.9)(1.8,-0.7)(1.6,-0.4)(1.5,0)
   \psline[linestyle=dotted,dotsep=1mm](0,0)(1.5,0)
   \rput(1.2,-0.9){$-$}
  \end{pspicture}
  \hspace{0.5cm}
  \begin{pspicture}(0,-2.6)(4,2.2)
   \pscurve{<-}(0,0)(2,-2)(4,0)\rput(2,-2.4){$\pm$} \pscurve{<-}(2.2,0.9)(1.8,0.7)(1.6,0.4)(1.5,0)
   \rput(1.2,0.9){$-$}
  \end{pspicture}
  \hspace{0.5cm}
  \begin{pspicture}(0,-2.6)(4,2.2)
   \pscurve{->}(0,0)(2,2)(4,0)\rput(2,2.4){$\pm$} \pscurve{->}(1,0)(2,0.9)(3,0)
   \rput(2,1.1){$-$}
  \end{pspicture}
  \hspace{0.5cm}
  \begin{pspicture}(0,-2.6)(4,2.2)
   \pscurve{<-}(0,0)(2,-2)(4,0)\rput(2,-2.4){$\pm$} \pscurve{<-}(1,0)(2,-0.9)(3,0)
   \rput(2,-1.1){$-$}
  \end{pspicture}
  \hspace{0.5cm}
  \begin{pspicture}(1,-2.6)(4,2.2)
   \pscurve{->}(1,0)(2,1.6)(3,0)\rput(2,2.1){$\pm$}\pscurve{->}(2,0)(3,1.6)(4,0)\rput(3,2.2){$+$}
   \psline[linestyle=dotted,dotsep=1mm](2,0)(3,0)
  \end{pspicture}
  \hspace{0.5cm}
  \begin{pspicture}(1,-2.6)(4,2.2)
   \pscurve{<-}(1,0)(2,-1.6)(3,0)\rput(2,-2.3){$\pm$}\pscurve{<-}(2,0)(3,-1.6)(4,0)\rput(3,-2.2){$+$}
  \end{pspicture}
\caption{ Configurations that characterize a crossing in the arrow diagram of a
  signed permutation. }
\label{confcross}
\end{figure}
\end{prop}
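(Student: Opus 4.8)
The plan is to make explicit the case analysis that the paragraph before the proposition only gestures at. Recall that in the arrow diagram of $\pi$ the arc attached to index $i$ joins the dots $i$ and $|\pi_i|$, lies above the horizontal axis exactly when $i\le|\pi_i|$ and below it otherwise, and carries the label $\sgn(\pi_i)$; and that $(i,j)$ is a crossing when it satisfies one of (a) $i<j\le\pi_i<\pi_j$, (b) $-i<j\le-\pi_i<\pi_j$, or (c) $i>j>\pi_i>\pi_j$. For a fixed crossing $(i,j)$ the picture formed by the two arcs attached to $i$ and $j$ is determined by three pieces of data: the pair of labels $(\sgn\pi_i,\sgn\pi_j)$, which of the two arcs is above and which below the axis, and the linear order of the four dots $i$, $j$, $|\pi_i|$, $|\pi_j|$ (allowing coincidences). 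So the task is to run through the possibilities for this data, one crossing type at a time, and check each time that the result is one of the six displayed configurations, the weak inequalities producing the dotted segments.

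I would start with type (a): there $\pi_j>\pi_i\ge j>0$ forces $\sgn\pi_i=\sgn\pi_j=+$, hence $i\le\pi_i$ and $j\le\pi_j$ place both arcs above the axis, and the chain $i<j\le\pi_i<\pi_j$ is literally the fifth picture, the weak step $j\le\pi_i$ being the dotted segment. It is worth noting at this point that the list of six pictures is closed under reflection in the horizontal axis (it pairs $1\!\leftrightarrow\!2$, $3\!\leftrightarrow\!4$, $5\!\leftrightarrow\!6$); together with the similar relation between crossing types (a) and (c) this lets one transport the analysis of (a) to (c) almost for free, splitting only on which of $\pi_i,\pi_j$ is negative and reading off $|\pi_i|,|\pi_j|$ from $i>j>\pi_i>\pi_j$ in each subcase.

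The substantive type is (b), where $\sgn\pi_i=-$, $\sgn\pi_j=+$, and $j\le|\pi_i|<\pi_j$, so the $j$-arc is above the axis while the $i$-arc is above when $i\le|\pi_i|$ and below when $i>|\pi_i|$, with neither the order of $i$ against $j$ nor against $\pi_j$ fixed. I would split on the position of $i$ accordingly. When the $i$-arc is above, the two above arcs $[i,|\pi_i|]$ and $[j,\pi_j]$ either interleave or nest, giving the fifth or the third picture, with $j\le|\pi_i|$ (and the degenerate possibility $i=|\pi_i|$, a signed fixed point) supplying the dotted segments; when the $i$-arc is below it is a backward arc whose head $|\pi_i|$ lies in $[j,\pi_j)$, and this is the regime where the first two pictures --- the configurations that do not look like honest curve-crossings --- are expected to occur, the dotted segment recording $j=|\pi_i|$.

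The main obstacle is exactly this last family of subcases. It is the only place where the naive reading ``the two arcs cross as curves'' breaks down, so one must argue directly, keeping careful track of which of $i,j,|\pi_i|,|\pi_j|$ may coincide, that every admissible configuration arising from a type-(b) crossing is one of the six in the list and that no boundary degeneration slips outside it; proving this exhaustiveness, rather than any single case, is where the work lies. Once it is done the remaining verifications are short and mechanical, and running them in the reverse direction also yields the converse characterization.
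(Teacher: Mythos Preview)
Your plan is the same as the paper's --- in fact the paper's entire proof reads ``The proof can be done by distinguishing all the possible cases for the signs of $i$ and $j$ for each type of crossing $(i,j)$. We omit details.'' So your sketch already supplies more than the paper does, and the case-by-case strategy is the intended one.

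Two points in your outline need adjustment, though. First, the horizontal-reflection pairing of the six pictures does not let you transport type (a) to type (c) as cheaply as you suggest. Type (a) forces $\pi_i,\pi_j>0$ and collapses to the single configuration~5, whereas type (c) allows $\pi_i>0,\pi_j<0$ or $\pi_i,\pi_j<0$, and those subcases require further splits on whether each arc lies above or below the axis (and sometimes on one more order relation among $i,j,|\pi_i|,|\pi_j|$). It is exactly these subcases of (c) that produce configurations~2 and~4, as well as configuration~1 with its upper arc labeled~$-$; none of them is obtained by reflecting the single case of (a). Second, your subcase (b2) yields only configuration~1 (with the upper arc labeled~$+$), not both~1 and~2: in (b2) the upper arc is the $j$-arc, which carries the label~$+$, so it can never play the role of the $-$-labeled upper fragment in configuration~2. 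In short, the case tree is somewhat bushier than your sketch indicates --- particularly on the (c) side --- but completing it requires only patience, no new idea.
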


For example, let us consider the signed permutation in Figure~\ref{fz2}. The dots 1,2,3 (respectively, 1,2,5 and 1,2,3,5)
correspond to a crossing as in the first (respectively, second and fourth) configuration of Figure~\ref{confcross}. 
The dots 2,4 are also a crossing as in (the limit case of) the first configuration, and eventually the dots 2,4,5 are a 
crossing as in (the limit case of) the fifth
configuration. So the signed permutation $3,-4,-2,5,1$ has 5 crossings.

Once we have this new description of the crossings, it is possible to encode the arrow diagram as in Figure~\ref{fz2}
by elements in $\mathcal{M}_n$. 
Actually it will be more convenient to see these paths in a slightly different way: each step $\nearrow$ (with weight $a$) 
faces a step $\searrow$ (weight $b$), but now we think as if the step $\searrow$ have weight $1$ and the facing step $\nearrow$ 
has weight $(a,b)$. But we still consider the steps $\rightarrow$ of types 3, 4, 5, 6 as before.

Now, the path is obtained from the signed permutation by ``scanning'' the arrow diagram from right to left, so that after
scanning $i$ nodes in the diagram we have built a Motzkin suffix of length $i$. Suppose that there are $h$
unconnected strands after scanning these $i$ nodes, and accordingly the Motzkin suffix starts at height $h$.
When scanning the following node, we have several possibilities:
\begin{itemize}
\item If the $i$th node is $\nodg$ then we add a step $\searrow$ (with weight 1) to the Motzkin suffix,
\item If the $i$th node is $\nodfp$ or $\nodhp$, then we add a step $\rightarrow$ (type 3) with weight
        $y^2q^i$ where $i$ counts the number of crossings as in the 5th configuration that appear when adding this
        $i$th node to the ones at its right.
\item If the $i$th node is $\nodfm$ then we add a step $\rightarrow$ (type 4) with weight $yt$ to the Motzkin suffix,
        and if it is $\nodhm$ then we add a step $\rightarrow$ (type 4) with weight $ytq^{h+i}$, where $i$ counts the
        number of crossings as in 3rd configuration that appear when adding this $i$th node to the ones to its right.
        Note that $h$ is the number of crossings as in 2nd configuration that appear.
\item If the $i$th node is $\nodbp$, then we add a step step $\rightarrow$ (type 5) with weight
        $q^{i}$ where $i$ counts the number of crossings as in the 6th configuration that appear. 
\item If the $i$th node is $\nodbm$, then we add a step $\rightarrow$ (type 6) with weight
        $ytq^{h+i}$ where $i$ counts the number of crossings as in the 4th configuration that appear. Note that
        $h$ is the number of crossings as in 1st configuration that appear.
\item If the $i$th node is $\nodr$, then we add a step $\nearrow$ with a weight $(a,b)$ where $a$ and $b$ are as follows
        (this is similar to the previous cases but here we need to encode information about both the ingoing and outgoing arrow).
        The possibilities for $a$ are $q^0,\dots,q^{h-1},$ or $ytq^{h},\dots,ytq^{2h-1}$, such that there is a factor $1$
        (resp. $yt$) if the label of the ingoing arrow is $+$ (resp. $-$), and $q$ counts the crossings of the 6th (resp. 1st and 4th)
        configuration.
        The possibilities for $b$ are $y^2q^0,\dots,y^2q^{h-1},$ or $ytq^{h-1},\dots,ytq^{2h-2}$, such that there is a factor $y^2$
        (resp. $yt$) if the label of the outgoing arrow is $+$ (resp. $-$), and $q$ counts the crossings of the 5th (resp. 2nd and 3rd)
        configuration. 
\end{itemize}

See Figure~\ref{fz2} for an example.
Let $i<j$, if there is an arrow from $i$ to $j$ with a label $+$ then the $i$th step gets a weight $y^2$, and if there is an arrow
from $i$ to $j$ or from $j$ to $i$ with a label $-$, then the $i$th step gets a weight $yt$. It follows that the parameters $y$ and $t$
correspond to $\fwex(\pi)$ and $\neg(\pi)$ in signed permutations. By the design of the bijection, the parameter $q$
corresponds to $\cro(\pi)$. Hence we recover \eqref{bnfwexcro}.


\subsection{Suffixes of labeled Motzkin paths}

We can see a signed permutation as a permutation on $[\pm n]$ or on $[2n]$ with a symmetry property. 
Then, applying the bijections of the unsigned case gives some weighted Motzkin paths with a vertical symmetry.
It is natural to keep only the second half a vertically-symmetric path, and obtain suffixes of Motzkin paths.

\begin{defn}
Let $\mathcal{N}_n$ be the set of weighted Motzkin suffixes with weights:
\begin{itemize}
 \item either $y^2q^i$ with $0\leq i\leq h$, or $q^i$ with $0\leq i\leq h-1$, for a step $\rightarrow$ at height $h$,
 \item $y^2q^i$ with $0\leq i\leq h$, for a step $\nearrow$ at height $h$ to $h+1$,
 \item $q^i$ with $0\leq i\leq h$, for a step $\searrow$ at height $h+1$ to $h$.
\end{itemize}
For any $p\in\mathcal{N}_n$, let $\sh(p)$ be its initial height, and let $\w(p)$ be its total weight, {\it i.e.} the product of
the weight of each step.
\end{defn}
Then $B_n(y,t,q)$ is the generating function $\sum_{p\in \mathcal{N}_n} (yt)^{\sh(p)}\w(p)$, because the paths are the ones
arising from the second solution of the Matrix Ansatz in the previous section.

\subsubsection{The Fran\c con-Viennot bijection, second variant}

This second variant is a bijection between $\mathcal{N}_n$ and $B_n$, and it gives a combinatorial model of $B_n(y,t,q)$
involving the flag descents, and different from the previous ones. 
The second bijection is done using the diagram of a signed permutation as in Figure~\ref{fv2}.

\begin{figure}[h!tp] \psset{unit=4mm}
 \begin{pspicture}(-1,-1)(10,10)
\psgrid[gridcolor=gray,griddots=0,subgriddiv=0,gridlabels=0](0,0)(10,10)
\rput(5.5,7.5){$\bullet$}
\rput(6.5,0.5){$\bullet$}
\rput(7.5,3.5){$\bullet$}
\rput(8.5,8.5){$\bullet$}
\rput(9.5,5.5){$\bullet$}
\rput(0.5,4.5){$\bullet$}
\rput(1.5,1.5){$\bullet$}
\rput(2.5,6.5){$\bullet$}
\rput(3.5,9.5){$\bullet$}
\rput(4.5,2.5){$\bullet$}
\rput(-0.8,0.5){-5}
\rput(-0.8,1.5){-4}
\rput(-0.8,2.5){-3}
\rput(-0.8,3.5){-2}
\rput(-0.8,4.5){-1}
\rput(-0.8,5.5){1}
\rput(-0.8,6.5){2}
\rput(-0.8,7.5){3}
\rput(-0.8,8.5){4}
\rput(-0.8,9.5){5}
\rput(0.1,-0.8){-5}
\rput(1.2,-0.8){-4}
\rput(2.3,-0.8){-3}
\rput(3.4,-0.8){-2}
\rput(4.5,-0.8){-1}
\rput(5.5,-0.8){1}
\rput(6.5,-0.8){2}
\rput(7.5,-0.8){3}
\rput(8.5,-0.8){4}
\rput(9.5,-0.8){5}
\psline[linestyle=dashed,linecolor=black,linewidth=0.8mm](-0.5,5)(10.5,5)
\end{pspicture}
\hspace{2cm} \psset{unit=5mm}
 \begin{pspicture}(0,-2)(5,3)
   \psgrid[gridcolor=gray,griddots=4,subgriddiv=0,gridlabels=0](0,0)(5,4)
   \psline(5,0)(4,1)(3,2)(2,3)(1,3)(0,2)
   \rput(0.5,-0.6){1} \rput(1.5,-0.6){2} \rput(2.5,-0.6){3} \rput(3.5,-0.6){4} \rput(4.5,-0.6){5}
   \rput(0,3){$y^2q^2$} \rput(1.5,3.5){$y^2$} \rput(3,3){$q$} \rput(4,2){$q$} \rput(5,1){$1$}
 \end{pspicture}
\caption{The second variant of the Fran\c con-Viennot bijection, with
  $\pi=3,-5,-2,4,1$  }
\label{fv2}
\end{figure}
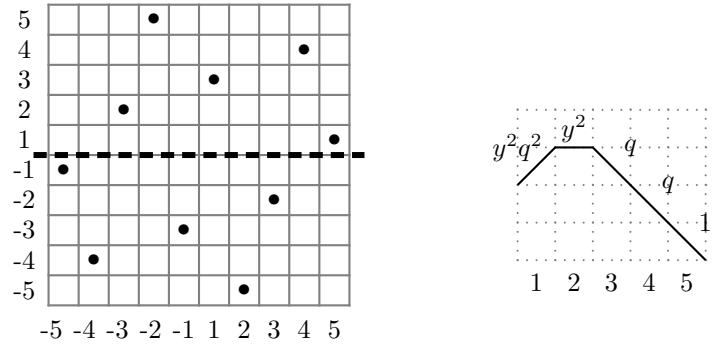


\begin{defn} For a signed permutation $\pi$, let
$\fneg(\pi)$ be the number of positive integers followed by a negative integer in the sequence
$\pi(-n),\dots,\pi(-1),\pi(1),\dots,\pi(n)$. Note that $\fneg(\pi)=0$ if and only if $\pi$ is actually an
unsigned permutation. Let
\[
    \mot^+(\pi) = \sum_{ \substack{ 1 \leq i \leq 2n  \\   \tilde\pi(i) > n   } } \mot( \tilde\pi , i )
\]
where $\tilde\pi \in S_{2n}$ is the permutation corresponding to $\pi$ via the 
order-preserving identification $[\pm n] \to \{1,\dots, 2n  \}$.
\end{defn}

The properties of the Fran\c con-Viennot bijection show the following.

\begin{prop}[Second equality of Theorem~\ref{bndes} in the introduction] For $n\geq1$,
\begin{equation}
 B_n(y,t,q) = \sum_{\pi \in B_n}  y^{\fdes(\pi) + 1 }    t^{\fneg(\pi)} q^{ \mot^+ (\pi) }.
\end{equation}
\end{prop}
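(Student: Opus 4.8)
The plan is to deduce this from the ordinary (type $A$) Fran\c con--Viennot bijection applied to a ``doubled'' permutation. Given $\pi\in B_n$, regard it as a bijection of $[\pm n]$ and transport it, along the order-preserving identification $[\pm n]\to[2n]$, to a permutation $\tilde\pi\in S_{2n}$; this $\tilde\pi$ is \emph{centrally symmetric}, $\tilde\pi(2n+1-i)=2n+1-\tilde\pi(i)$ for all $i$ (with the conventions $\tilde\pi_0=0$, $\tilde\pi_{2n+1}=2n+1$, which themselves respect this symmetry). Applying the unsigned Fran\c con--Viennot bijection of \cite{Corteel2007,FV} to $\tilde\pi$ produces a weighted Motzkin path $P$ of length $2n$, the one recording descents and the pattern $31$-$2$ of $\tilde\pi$.

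First I would show that $P$ is symmetric about the vertical line $x=n$. This is an equivariance statement for the reverse--complement involution $\sigma\mapsto\bar\sigma$ on $S_{2n}$, $\bar\sigma(i)=2n+1-\sigma(2n+1-i)$: under Fran\c con--Viennot it corresponds to reflecting the path about $x=n$ together with the swap $\nearrow\leftrightarrow\searrow$ (valleys and peaks are exchanged, double ascents and double descents preserved), and the $q$-labels of mirror steps agree because $31$-$2$ occurrences of $\tilde\pi$ to the left of a position match $2$-$31$ occurrences to the right of the mirror position, i.e.\ $\mot(\tilde\pi,p)=\mott(\tilde\pi,2n+1-p)$. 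Since $\tilde\pi=\bar{\tilde\pi}$, the path $P$ equals its own reflection, hence is geometrically symmetric. Keeping the second half of $P$ (equivalently, the first half read backwards) yields a Motzkin suffix $p$; one checks that its folding height, admissible step types, and label ranges are precisely those defining $\mathcal N_n$, so $p\in\mathcal N_n$, and the symmetry of $P$ lets one reconstruct $P$, then $\tilde\pi$, then $\pi$ from $p$. Thus $\pi\mapsto p$ is a bijection $B_n\to\mathcal N_n$.

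Next I would match the three statistics against the weight $(yt)^{\sh(p)}\w(p)$. The starting height $\sh(p)$ is the midpoint height of $P$, which for Fran\c con--Viennot equals the number of descents of $\tilde\pi$ straddling level $n$, i.e.\ the number of pairs with $\tilde\pi_k>n\geq\tilde\pi_{k+1}$; under the identification this is exactly the number of positive entries immediately followed by a negative entry in $\pi(-n),\dots,\pi(n)$, namely $\fneg(\pi)$. Since every factor $t$ in the $\mathcal N_n$-weight sits in $(yt)^{\sh(p)}$, this gives the exponent of $t$. The exponent of $q$ is the sum of the Fran\c con--Viennot $q$-labels over the retained half, which rearranges, via the $\mot\leftrightarrow\mott$ symmetry above, to $\mot^+(\pi)=\sum_{\tilde\pi(i)>n}\mot(\tilde\pi,i)$. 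The exponent of $y$ combines the $\sh(p)$ coming from $(yt)^{\sh(p)}$ with the $y^2$'s carried by the up-steps and the $y^2$-type flat steps of $p$; using the fold, identifying it with $\fdes(\pi)+1$ reduces to relating $\des(\tilde\pi)$, the midpoint height of $P$, and the ascending steps of the half to $\des(\pi)+\des_B(\pi)+1$, by tracking how ascents and descents of $\tilde\pi$ near position $n$ correspond to ascents and descents of $\pi$ together with the sign of $\pi_1$.

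The delicate step is this last identification. Although the two halves of $P$ are mirror images, the weight schemes on $\nearrow$ and on $\searrow$ steps differ by $y^2$, so the weight of $P$ is not the square of the weight of a half: one must keep careful account of where the surplus $y^2$'s lie (on the down-steps of one half versus the up-steps of the other) and how folding at height $\sh(p)$ redistributes them before the $y$-count can be recognized as $\fdes(\pi)+1$. It is probably cleaner, as the authors do, to carry out this bookkeeping directly on the $[\pm n]\times[\pm n]$ grid picture of Figure~\ref{fv2} rather than through $S_{2n}$, but the content is the same; a secondary point is to pin down the behaviour at the central step, verifying that halves of vertically symmetric Motzkin paths are \emph{exactly} the elements of $\mathcal N_n$ with no off-by-one in the central label range.
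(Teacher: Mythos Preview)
Your proposal is correct and follows essentially the same approach as the paper: view $\pi\in B_n$ as a centrally symmetric permutation of $[2n]$, apply the unsigned Fran\c con--Viennot bijection to obtain a vertically symmetric weighted Motzkin path, and keep half of it to land in $\mathcal N_n$; this is exactly what the paper announces at the start of Subsection~5.2 and what Figure~\ref{fv2} depicts. The paper's own argument is in fact terser than yours (it simply writes ``The properties of the Fran\c con--Viennot bijection show the following''), so your more detailed bookkeeping of the $y$-, $t$-, and $q$-exponents, and your caveat about the asymmetric distribution of $y^2$ weights between the two halves, are welcome elaborations rather than a departure from the intended route.
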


Note that once again, we recover \eqref{enkmot} in the case $t=0$.

\subsubsection{The Foata-Zeilberger bijection, second variant}
Let $\pi\in B_n$, 
we use here the arrow diagram as in Figure~\ref{fz1} where the dots are labeled by $-n,\dots,-1,1,\dots,n$ and there is
an arrow from $i$ to $\pi(i)$ above the axis if $i\leq\pi(i)$ and below otherwise (this is equivalent to the full pignose
diagram used in Section~\ref{cro_ali} where each pignose collapses to a single dot).
The path $p\in\mathcal{N}_n$ is as follows.
If at the $i$th node in the diagram (where $1\leq i \leq n$), there is an arrow arriving from the left and an arrow going to
the left, then the $i$th step is $\searrow$. If there is an arrow going to the right and arriving from the right, then it is a step
$\nearrow$. In all other cases it is a step $\rightarrow$.
Then, for each arrow going from $i$ to $j$ with $0<i<j$, we give a weight $y^2$ to the $i$th step in the path;
for each crossing $i<j\leq\pi(i)<\pi(j)$ with $j>0$ we give a weight $q$ to the $j$th step; and
for each crossing $i>j>\pi(i)>\pi(j)$ with $j>0$ we give a weight $q$ to the $j$th step.
See Figure~\ref{fz1} for example.

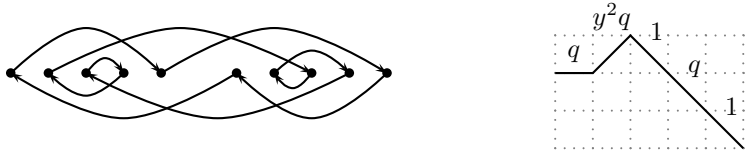
\begin{figure}[h!tp] \psset{unit=5mm}
 \begin{pspicture}(-5,-2)(5,2)
  \psdots(-5,0)(-4,0)(-3,0)(-2,0)(-1,0)
  \psdots(1,0)(2,0)(3,0)(4,0)(5,0)
   \pscurve{->}(2,0)(3,0.6)(4,0) \pscurve{->}(-2,0)(-3,-0.6)(-4,0)
   \pscurve{->}(3,0)(2.5,-0.4)(2,0) \pscurve{->}(-3,0)(-2.5,0.4)(-2,0)
  \pscurve{->}(5,0)(3,-1.2)(1,0) \pscurve{->}(-5,0)(-3,1.2)(-1,0)
   \pscurve{->}(1,0)(-2,-1.2)(-5,0)\pscurve{->}(-1,0)(2,1.2)(5,0)
  \pscurve{->}(4,0)(0.5,-1.2)(-3,0) \pscurve{->}(-4,0)(-0.5,1.2)(3,0)
 \end{pspicture}
\hspace{2cm}
 \begin{pspicture}(0,0)(5,3)
 \psgrid[gridcolor=gray,griddots=4,subgriddiv=0,gridlabels=0](0,0)(5,3)
 \psline(5,0)(4,1)(3,2)(2,3)(1,2)(0,2)
 \rput(0.5,2.5){$q$}\rput(1.5,3.5){$y^2q$}\rput(2.7,3.1){$1$}\rput(3.7,2.1){$q$}\rput(4.7,1.1){$1$}
 \end{pspicture}
\caption{The second variant of the Foata-Zeilberger bijection in the case of
  $\pi=-5,4,2,-3,1$.   }
\label{fz1}
\end{figure}
The number of arrows that join a positive integer to a negative one is $\neg(\pi)$.
From the construction, this is also the initial height of the path we have built.
So giving a weight $(yt)^{\sh(p)}$ to the path $p$ ensures that $y$ and $t$ respectively follow $\fwex(\pi)$ and $\neg(\pi)$.
Also from the definition of the bijection, $q$ counts the number of crossings.
Hence we recover \eqref{bnfwexcro}.

\section{Enumeration formula}
\label{EF}

%
%

In this section, we prove a formula for $B_n(y,1,q)$. The formula itself is
somewhat similar to the following one for $B_n(y,0,q)$ proved in \cite{J} but
the proof is different:
\begin{equation}
  \label{eq:y0q}
B_n(y,0,q) = \frac{1}{(1-q)^n} \sum_{k=0}^n 
\left(
\sum_{j=0}^{n-k} y^j 
  \left( \tbinom{n}{j}\tbinom{n}{j+k} - \tbinom{n}{j-1}\tbinom{n}{j+k+1} \right)
\right)
\left( \sum_{i=0}^k (-1)^k y^i q^{i(k+1-i)} \right),
\end{equation}

\begin{thm}[Theorem~\ref{bnformula} in the introduction]
\begin{equation} \label{formulean1}
  B_n(y,1,q) =
  \frac{1}{(1-q)^n} \sum_{j=0}^n (-1)^j
  \left(\sum_{i=0}^{2n-2j}  y^i \binom{n}{j+\lceil \frac i2 \rceil}
                                \binom{n}{\lfloor \frac i2 \rfloor} \right)
  \left(\sum_{\ell=0}^{2j}    y^\ell q^{\frac{\ell(2j-\ell+1)}{2}}          \right).
\end{equation}
Or, equivalently:
\begin{equation} \label{formulean2}
B_n(y,1,q) =  \frac 1{(1-q)^n} \sum\limits_{k=0}^n
 \left(\sum\limits_{i=0}^{n-k} y^{2i}
    \Big( \tbinom{n}{i}\tbinom{n}{i+k} -
   \tbinom{n}{i-1}\tbinom{n}{i+k+1}\Big) \right)
   \sum\limits_{j=0}^k y^{k-j} (-1)^j  \sum_{\ell=0}^{2j} y^\ell  q^{\frac{\ell(2j-\ell+1)}2}.
\end{equation}
\end{thm}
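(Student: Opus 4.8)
The plan is to derive the formula from the $J$-fraction of Theorem~\ref{bnfrac} by specializing $t=1$ and contracting. At $t=1$ the coefficients factor: a direct computation gives
\[
\gamma_h = (1+yq^h)[h]_q + y(y+q^h)[h+1]_q, \qquad \lambda_h = y\,(y+q^{h-1})(1+yq^h)\,[h]_q^2 .
\]
This is exactly the even part of a Stieltjes continued fraction: setting
\[
c_{2h-1} = y\,(y+q^{h-1})\,[h]_q, \qquad c_{2h} = (1+yq^h)\,[h]_q \qquad (h\ge 1),
\]
one checks $\gamma_0=c_1$, $\gamma_h=c_{2h}+c_{2h+1}$, and $\lambda_h=c_{2h-1}c_{2h}$, so $\sum_{n\ge0}B_n(y,1,q)z^n$ is the moment generating function of the $S$-fraction with coefficient sequence $(c_m)$. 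Crucially, each $c_m$ is the $q$-integer $[\lceil m/2\rceil]_q$ times a factor that is linear in $y$ (a power of $y$ times $y+q^{h-1}$ or $1+yq^h$), which is precisely the situation handled by the continued-fraction and moment techniques of \cite{JK}.

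I would then run that machinery. Expand $B_n(y,1,q)$ over the weighted Dyck paths of semilength $n$ attached to this $S$-fraction, multiply by $(1-q)^n$ to clear the $q$-integer denominators (there are exactly $n$ of them, one per up-step), and apply the sign-reversing involution of \cite{JK} to the resulting configurations of ``crossings''. What survives is indexed by the number $j$ of frozen crossings; it contributes the sign $(-1)^j$, the partial-theta weight $\sum_{\ell=0}^{2j} y^\ell q^{\ell(2j-\ell+1)/2}$ coming from those crossings, and a $y$-weight $P_{n,j}(y)$ recording the shape of the underlying crossing-free path. This produces
\[
B_n(y,1,q) = \frac{1}{(1-q)^n}\sum_{j\ge 0}(-1)^j\,P_{n,j}(y)\left(\sum_{\ell=0}^{2j} y^\ell q^{\ell(2j-\ell+1)/2}\right),
\]
the sum over $j$ truncating automatically at $j=n$.

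It then remains to identify $P_{n,j}$. The crossing-free shapes are pairs of lattice paths with prescribed endpoints, so by the Lindström--Gessel--Viennot lemma (reflection principle) their generating polynomial is exactly the determinantal expression $\binom{n}{m}\binom{n}{m+k}-\binom{n}{m-1}\binom{n}{m+k+1}$ appearing in \eqref{formulean2}, and recording the $y$-bookkeeping yields \eqref{formulean2}. The plain-binomial form \eqref{formulean1}, with coefficient $\binom{n}{j+\lceil i/2\rceil}\binom{n}{\lfloor i/2\rfloor}$, is then an equivalent repackaging: expanding the determinants and re-summing the resulting alternating sums (a Chu--Vandermonde-type collapse) converts the ballot/determinantal form into the product form. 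Both can be sanity-checked against the $t=0$ formula \eqref{eq:y0q} of \cite{J} and against the values $B_0$, $B_1$, $B_2$ in the introduction.

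The main obstacle is the middle step. The coefficients $c_m$ are not pure monomials in $y$ times $q$-integers---they carry the binomials $y+q^{h-1}$ and $1+yq^h$---so one must fit this genuinely two-variable $S$-fraction into the framework of \cite{JK}, keeping track of how those two non-monomial factors distribute over the up- and down-steps of the Dyck paths and verifying that the involution still cancels every unwanted term. Pinning the surviving weight down to be \emph{exactly} the lattice-path polynomial $P_{n,j}$ with the stated binomial coefficients, rather than some other polynomial agreeing on small cases, is where the real work lies; carrying the $(1-q)^{-n}$ normalization consistently through the contraction and the cancellation is a secondary but necessary bookkeeping point.
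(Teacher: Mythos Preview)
Your $S$-fraction contraction is correct, and the overall shape of the argument is plausible, but the paper takes a different route that sidesteps exactly the obstacle you identify.

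Rather than contracting the $J$-fraction to an $S$-fraction and then trying to run a Touchard--Riordan involution on weighted Dyck paths, the paper keeps the $J$-fraction and applies the substitution $z\mapsto z/((1+z)(1+y^2z))$. A short Lagrange-inversion computation (the first lemma, essentially from \cite{josuatrubey}) shows that if $(b_n)$ and $(c_n)$ are the moment sequences before and after this substitution, then $b_n=\sum_k\bigl(\sum_i y^{2i}(\binom ni\binom n{i+k}-\binom n{i-1}\binom n{i+k+1})\bigr)c_k$. So the determinantal coefficients in \eqref{formulean2} come directly from Lagrange inversion, not from LGV on surviving path shapes. It then remains to evaluate $c_k$. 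Here the paper quotes a $T$-fraction identity from \cite[Theorem~7.1]{JK} which already produces the partial-theta sum $\sum_{\ell=0}^{2j}y^\ell q^{\ell(2j-\ell+1)/2}$, and connects that $T$-fraction to the substituted $J$-fraction by a Schr\"oder-path decomposition (the third lemma). The equivalence of \eqref{formulean1} and \eqref{formulean2} is then a telescoping sum, as you guessed.

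The gap in your proposal is precisely the one you flag: the $S$-fraction coefficients $c_{2h-1}=y(y+q^{h-1})[h]_q$ and $c_{2h}=(1+yq^h)[h]_q$ carry two independent linear-in-$y$ factors, and you have not shown that the sign-reversing involution of \cite{JK} extends to this setting so as to leave exactly the claimed $P_{n,j}(y)$ times the partial-theta weight. That may well be true, but it is the entire content of the proof, and it is not established here. The paper's approach trades this combinatorial difficulty for two black-box citations plus algebraic continued-fraction manipulations; your approach, if completed, would be more self-contained but requires genuinely new work beyond what \cite{JK} provides off the shelf.
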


We can obtain \eqref{formulean1} from \eqref{formulean2} by simplifying a summation as follows. Let 
$P_j = \sum_{\ell=0}^{2j} y^\ell  q^{\frac{\ell(2j-\ell+1)}2}$, then the right-hand side of \eqref{formulean2} is
\begin{align*}
  \frac 1{(1-q)^n} \sum\limits_{j=0}^n (-1)^j P_j 
   \sum_{k=j}^n \left(\sum\limits_{i=0}^{n-k} y^{k-j+2i}
   \Big( \tbinom{n}{i}\tbinom{n}{i+k} -
   \tbinom{n}{i-1}\tbinom{n}{i+k+1}\Big) \right)  \\
= 
  \frac 1{(1-q)^n} \sum\limits_{j=0}^n (-1)^j P_j 
    \sum_{m=0}^{2n-2j} y^m \ \sum_{i=0}^{\lfloor m/2 \rfloor} \Big( \tbinom{n}{i}\tbinom{n}{m+j-i} - \tbinom{n}{i-1}\tbinom{n}{m+j-i+1} \Big).
\end{align*}
Here we have introduced the new index $m=2i+k-j$, and the condition $k-j\geq0$ gives the condition $m\geq 2i$, {\it i.e.} $i\leq\lfloor m/2 \rfloor $.
But the $i$-sum in the latter formula is actually telescopic and only the term $\binom ni \binom{n}{m+j-i} $ with $i=\lfloor m/2 \rfloor$ remains. Using the fact that
$m-\lfloor m/2\rfloor=\lceil m/2\rceil $, we obtain \eqref{formulean1}.

So, \eqref{formulean1} is a simpler formula, but \eqref{formulean2} is the one which is conveniently proved, using results
from \cite{JK}. The theorem follows from the two lemmas below (and a third lemma is needed to prove the second lemma).
The first lemma was essentially present in \cite{josuatrubey}.

\begin{lem}
 Suppose that two sequences $(b_n)_{n\ge0}$ and $(c_n) _{n\ge0}$ are such that:
\begin{equation} \label{fracan}
   \sum_ {n\geq0} b_n z^n = 
   \frac{1}{1-\gamma_0z} \cmo \frac{\lambda_1z^2}{1-\gamma_1z} \cmo \frac{\lambda_2z^2}{1-\gamma_2z} \cmo \dots
\end{equation}
and
\begin{equation} \label{fracbn}
   \sum_ {n\geq0} c_n z^n = 
   \frac{1}{(1+z)(1+y^2z)-\gamma_0z} \cmo \frac{\lambda_1z^2}{(1+z)(1+y^2z)-\gamma_1z} \cmo 
   \frac{\lambda_2z^2}{(1+z)(1+y^2z)-\gamma_2z} \cmo \dots
\end{equation}
Then we have:
\[
   b_n = \sum_{k=0}^n  \left( \sum\limits_{j=0}^{n-k} y^{2j}
    \Big( \tbinom{n}{j}\tbinom{n}{j+k} -
   \tbinom{n}{j-1}\tbinom{n}{j+k+1}\Big) \right)   c_k.
\]
\end{lem}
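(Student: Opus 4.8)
The plan is to relate the two $J$-fractions \eqref{fracan} and \eqref{fracbn} by an explicit change of variable, and then invert that change of variable by Lagrange inversion.

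\textbf{Step 1 (the master identity).} Write $F(z)=\sum_{n\ge0}b_nz^n$ and $G(z)=\sum_{n\ge0}c_nz^n$, and for $h\ge0$ let $F_h(z)$ and $G_h(z)$ be the continued fractions obtained from \eqref{fracan} and \eqref{fracbn} by deleting the first $h$ levels, so that $F=F_0$, $G=G_0$ and
\[
F_h(z)=\frac{1}{1-\gamma_hz-\lambda_{h+1}z^2F_{h+1}(z)},\qquad
G_h(z)=\frac{1}{(1+z)(1+y^2z)-\gamma_hz-\lambda_{h+1}z^2G_{h+1}(z)} .
\]
These are well-defined formal power series because each $\lambda_{h}z^2$ has valuation at least $2$, so the finite truncations converge $z$-adically. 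Put $\phi(z)=\bigl((1+z)(1+y^2z)\bigr)^{-1}$ and $\psi(z)=z\,\phi(z)$. I would show that $G_h(z)=\phi(z)\,F_h\bigl(\psi(z)\bigr)$ for all $h\ge0$: substituting the ansatz $G_{h+1}=\phi\cdot(F_{h+1}\circ\psi)$ into the recursion for $G_h$, factoring $\phi^{-1}=(1+z)(1+y^2z)$ out of the denominator, and using $z\phi=\psi$, $z^2\phi^2=\psi^2$ turns that recursion into $G_h=\phi\cdot(F_h\circ\psi)$; since this holds for each truncation, it passes to the $z$-adic limit. Taking $h=0$ gives
\[
G(z)=\frac{1}{(1+z)(1+y^2z)}\,F\!\left(\frac{z}{(1+z)(1+y^2z)}\right).
\]

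\textbf{Step 2 (inversion).} The series $\psi(z)=z-(1+y^2)z^2+\cdots$ has a compositional inverse $\zeta(w)=w+\cdots$, characterised by $\zeta=w(1+\zeta)(1+y^2\zeta)$, whence also $(1+\zeta(w))(1+y^2\zeta(w))=\zeta(w)/w$. Substituting $z=\zeta(w)$ into the master identity yields $F(w)=\bigl(1+\zeta(w)\bigr)\bigl(1+y^2\zeta(w)\bigr)\,G\bigl(\zeta(w)\bigr)=\tfrac{\zeta(w)}{w}\,G\bigl(\zeta(w)\bigr)$, i.e. $\sum_{n\ge0}b_nw^n=\sum_{k\ge0}c_k\,\zeta(w)^{k+1}/w$. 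Extracting $[w^n]$, and using $\zeta(w)^{k+1}=w^{k+1}+\cdots$ so that only $0\le k\le n$ contribute, gives $b_n=\sum_{k=0}^{n}c_k\,[w^{n+1}]\zeta(w)^{k+1}$.

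\textbf{Step 3 (Lagrange inversion and a binomial identity).} Since $\zeta=wR(\zeta)$ with $R(z)=(1+z)(1+y^2z)$, Lagrange inversion gives
\[
[w^{n+1}]\zeta(w)^{k+1}=\frac{k+1}{n+1}\,[z^{n-k}](1+z)^{n+1}(1+y^2z)^{n+1}
=\frac{k+1}{n+1}\sum_{j\ge0}\binom{n+1}{j}\binom{n+1}{j+k+1}y^{2j},
\]
using $\binom{n+1}{n-k-j}=\binom{n+1}{j+k+1}$. Then the elementary identity
\[
\frac{k+1}{n+1}\binom{n+1}{j}\binom{n+1}{j+k+1}=\binom{n}{j}\binom{n}{j+k}-\binom{n}{j-1}\binom{n}{j+k+1},
\]
which follows from a one-line computation of the ratio $\binom{n}{j-1}\binom{n}{j+k+1}\big/\bigl(\binom{n}{j}\binom{n}{j+k}\bigr)$ (the relevant numerator $(n-j+1)(j+k+1)-j(n-j-k)$ collapses to $(n+1)(k+1)$), converts the sum into $\sum_{j=0}^{n-k}y^{2j}\bigl(\binom{n}{j}\binom{n}{j+k}-\binom{n}{j-1}\binom{n}{j+k+1}\bigr)$, all remaining terms vanishing. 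Feeding this back into $b_n=\sum_{k}c_k[w^{n+1}]\zeta(w)^{k+1}$ produces exactly the claimed formula.

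The genuinely creative point is guessing the substitution $z\mapsto z/((1+z)(1+y^2z))$ (equivalently, the scaling ansatz $G_h=\phi\cdot(F_h\circ\psi)$) in Step 1; everything afterwards is bookkeeping, the only delicate points being the $z$-adic convergence that legitimises the fixed-point argument and keeping the binomial index ranges straight at the end. This is, in essence, the argument of \cite{josuatrubey}.
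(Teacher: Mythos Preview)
Your proof is correct and follows essentially the same route as the paper: establish $G(z)=\frac{1}{(1+z)(1+y^2z)}F\bigl(\frac{z}{(1+z)(1+y^2z)}\bigr)$, invert via the compositional inverse, and compute the resulting coefficients by Lagrange inversion together with the same binomial identity. The only cosmetic difference is that your Step~1 justifies the substitution by an inductive fixed-point argument on the tails $F_h,G_h$ with $z$-adic convergence, whereas the paper obtains it in one line by dividing numerator and denominator of each level of \eqref{fracbn} by $(1+z)(1+y^2z)$; the two arguments are equivalent.
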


\begin{proof}
Let $f(z)$ and $g(z)$ respectively denote the generating functions of $(b_n) _{n\ge0}$ and $(c_n) _{n\ge0}$ as in Equations~\eqref{fracan}
and \eqref{fracbn}. 
Divide by $(1+z)(1+y^2z)$ the numerator and denominator of each fraction in \eqref{fracbn}, 
this gives an equivalence of continued fractions so that:
\[
   zg(z) =  \tfrac{z}{(1+z)(1+y^2z)} f\left( \tfrac{z}{(1+z)(1+y^2z)} \right).
\]
It follows that 
\begin{equation} \label{inv1}
  zf(z) = C(z)g(C(z))
\end{equation}
where $C(z)$ is the compositional inverse of $\frac{z}{(1+z)(1+y^2z)}$.
It remains to show that 
\begin{equation} \label{inv2}
  C(z)^{k+1} = \sum_{n\geq0}     \left( \sum\limits_{j=0}^{n-k} y^{2j}
    \Big( \tbinom{n}{j}\tbinom{n}{j+k} -
   \tbinom{n}{j-1}\tbinom{n}{j+k+1}\Big) \right) z^{n+1}.
\end{equation}
Indeed, the lemma follows from taking the coefficient of $z^{n+1}$ in both sides 
of \eqref{inv1} and using \eqref{inv2} to simplify the right-hand side.
Showing \eqref{inv2} can be conveniently done using Lagrange inversion.
For example, with \cite[p.148, Theorem A]{comtet}, we obtain
\[
  [z^{n+1}] C(z)^{k+1} = \frac{k+1}{n+1} [z^{n-k}] \big((1+z)(1+y^2z)\big)^{n+1}
  = \frac{k+1}{n+1} \sum_{j=0}^{n-k} y^{2j} \binom{n+1}{j}\binom{n+1}{n-k-j},
\]
and it is straightforward to check that
\[
   \tfrac{k+1}{n+1} \tbinom{n+1}{j}\tbinom{n+1}{n-k-j} = \tbinom{n}{j}\tbinom{n}{j+k} -
   \tbinom{n}{j-1}\tbinom{n}{j+k+1}.
\]
This completes the proof.
\end{proof}

We can apply this lemma with $b_n= (1-q)^n B_n(y,1,q)$. The continued fraction expansion of 
$\sum b_nz^n$ is immediately obtained from the one of $\sum B_n(y,t,q)z^n$ obtained in Theorem~\ref{bnfrac}. 
More precisely, it is exactly Equation~\eqref{fracan} with the values 
\begin{equation} \label{valuesgl} \begin{split}
\gamma_h  &= y^2(1-q^{h+1}) + (1-q^h) + yq^h(2-q^{h+1}-q^h),  \\
\lambda_h &= y(1-q^h)^2(y+q^{h-1})(1+yq^h).
\end{split}\end{equation}
The theorem is now a consequence of another lemma which gives the value of $c_k$.

\begin{lem}
With $\gamma_h$ and $\lambda_h$ as in \eqref{valuesgl}, we have:
\begin{align*}
   \frac{1}{(1+z)(1+y^2z)-\gamma_0z} \cmo \frac{\lambda_1z^2}{(1+z)(1+y^2z)-\gamma_1z} \cmo 
   \frac{\lambda_2z^2}{(1+z)(1+y^2z)-\gamma_2z} \cmo \dots \\
  = \sum_{k\geq0} z^k  \sum\limits_{j=0}^k y^{k-j} (-1)^j  \sum_{i=0}^{2j} y^i  q^{i(2j-i+1)/2}.
\end{align*}
\end{lem}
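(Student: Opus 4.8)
The plan is to reduce the statement to an explicit $q$-series identity and then invoke the third (auxiliary) lemma to finish. First I would rewrite the right-hand side: putting $P_j=\sum_{i=0}^{2j}y^iq^{i(2j-i+1)/2}$ and interchanging the order of summation gives, as formal power series,
\[
\sum_{k\ge0}z^k\sum_{j=0}^{k}y^{k-j}(-1)^jP_j
=\sum_{j\ge0}(-1)^jP_jz^j\sum_{k\ge j}(yz)^{k-j}
=\frac{1}{1-yz}\sum_{j\ge0}(-z)^jP_j ,
\]
so it suffices to show that the continued fraction on the left equals $\frac{1}{1-yz}\sum_{j\ge0}(-z)^jP_j$.

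Next I would use that dividing numerator and denominator at every level by $(1+z)(1+y^2z)$ is an equivalence transformation of continued fractions; it rewrites the left side as
\[
\frac{1}{(1+z)(1+y^2z)}\,F\!\left(\frac{z}{(1+z)(1+y^2z)}\right),
\qquad
F(w)=\frac{1}{1-\gamma_0w}\cmo\frac{\lambda_1w^2}{1-\gamma_1w}\cmo\cdots ,
\]
the ordinary $J$-fraction built from the $\gamma_h,\lambda_h$ of \eqref{valuesgl}. Because $\lambda_h=y(1-q^h)^2(y+q^{h-1})(1+yq^h)$ factors, a short check shows that $F$ is the even contraction of a Stieltjes fraction with partial numerators $a_{2h+1}=(1-q^{h+1})y(y+q^h)$ and $a_{2h}=(1-q^h)(1+yq^h)$; this is exactly the class of continued fractions evaluated in \cite{JK}, and the third lemma records the resulting closed form of $F(w)$ as a basic $q$-series. (Alternatively one can work directly with the convergents $P_n/Q_n$ of the original continued fraction, whose denominators satisfy a three-term recurrence with $\Delta_h=(1+z)(1+y^2z)-\gamma_hz$ and $\lambda_hz^2$, and identify $\lim_nP_n/Q_n$ via the third lemma.)

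I would then substitute $w=z/((1+z)(1+y^2z))$ into the closed form of $F$. By Lagrange inversion the powers of the compositional inverse $C(z)$ of $z/((1+z)(1+y^2z))$ expand into the binomial coefficients that recur throughout Section~\ref{EF} — this is precisely the computation \eqref{inv2} — and it remains to see that composing the $q$-dependent factor of $F$ with $C(z)$, together with the prefactor $\frac{1}{(1+z)(1+y^2z)}$, yields $\frac{1}{1-yz}\sum_{j\ge0}(-z)^jP_j$; comparing with the rewritten right-hand side then completes the proof.

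The step I expect to be the main obstacle is exactly this last one: showing that after the substitution $w\mapsto z/((1+z)(1+y^2z))$ the (a priori unwieldy) $q$-series for $F$ degenerates to precisely $\frac{1}{1-yz}\sum_{j\ge0}(-z)^jP_j$. This ``$q$-part'' of the identity — in the spirit of the Touchard--Riordan formulas of \cite{JK} — is what the third lemma carries out; by comparison the binomial bookkeeping elsewhere in the section is routine.
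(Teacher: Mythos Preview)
Your first step, rewriting the right-hand side as $\frac{1}{1-yz}\sum_{j\ge0}(-z)^jP_j$, matches the paper exactly. After that, however, the plan goes astray.

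The substitution $w=z/((1+z)(1+y^2z))$ gives $g(z)=\frac{1}{(1+z)(1+y^2z)}F(w)$ with $F(w)=\sum_n(1-q)^nB_n(y,1,q)\,w^n$. But there is no independent closed form for $F(w)$ available at this point: $F$ is precisely the generating function whose coefficients we are trying to determine, and the formula \eqref{formulean2} for $B_n$ is \emph{derived from} the present lemma via the first lemma. So the route ``find a closed form for $F$, substitute, simplify'' is circular. Your Stieltjes contraction with $a_{2h+1}=(1-q^{h+1})y(y+q^h)$, $a_{2h}=(1-q^h)(1+yq^h)$ is correct algebra, but neither \cite{JK} nor the third lemma evaluates that $S$-fraction to a $q$-series; \cite{JK} (as used here) evaluates a $T$-fraction with denominators $1-yz$, not $1$.

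More importantly, you have misread what the third lemma does. It is not a closed-form evaluation of $F(w)$ and involves no substitution or Lagrange inversion. It is a purely continued-fraction identity: given sequences satisfying $\lambda_h=d_{2h-1}d_{2h}$ and $\gamma_h=(1+y)^2-d_{2h}-d_{2h+1}$, it asserts
\[
\frac{1-yz}{(1+z)(1+y^2z)-\gamma_0z}\cmo\frac{\lambda_1z^2}{(1+z)(1+y^2z)-\gamma_1z}\cmo\cdots
=\frac{1}{1-yz}\cmp\frac{d_1z}{1-yz}\cmp\frac{d_2z}{1-yz}\cmp\cdots,
\]
and its proof is combinatorial (weighted Schr\"oder paths, splitting at even heights). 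The paper's argument for the present lemma is then: (i) quote \cite{JK} to get $\sum_{j\ge0}(-z)^jP_j$ equal to the right-hand $T$-fraction above with the specific $d_{2h+1}=(1+yq^{h+1})(y+q^h)$, $d_{2h}=y(1-q^h)^2$; (ii) check the two relations linking $\gamma_h,\lambda_h$ to these $d_h$; (iii) apply the third lemma. Nothing about $C(z)$ or \eqref{inv2} enters here---that machinery lives entirely in the first lemma.
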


\begin{proof}
After multiplying by $1-yz$, this is equivalent to
\begin{equation} \label{eqmul} \begin{split}
   \frac{1-yz}{(1+z)(1+y^2z)-\gamma_0z} \cmo \frac{\lambda_1z^2}{(1+z)(1+y^2z)-\gamma_1z} \cmo 
   \frac{\lambda_2z^2}{(1+z)(1+y^2z)-\gamma_2z} \cmo \dots \\
  = \sum_{j\geq0} (-z)^j  \sum_{i=0}^{2j} y^i  q^{i(2j-i+1)/2}.
\end{split}\end{equation}
A continued fraction expansion of the right-hand side is obtained in the work of the second and third authors \cite{JK}.
More precisely, the substitution $(z,y,q)\mapsto (-yz,yq^{\frac 12},q^{\frac 12})$ in \cite[Theorem 7.1]{JK} gives
\begin{align} \label{eqmul2}
\sum_{j\geq0} (-z)^j  \sum_{i=0}^{2j} y^i  q^{i(2j-i+1)/2} =
\frac 1{1-yz} \cmp \frac{d_1z}{1-yz} \cmp \frac{d_2z}{1-yz} \cmp \frac{d_3z}{1-yz} \cmp \cdots
\end{align}
where  $d_{2h+1}=(1+yq^{h+1})(y+q^h)$ and $d_{2h}=y(1-q^h)^2$. It is immediate to check the relations
\begin{align} \label{relglc}
 \lambda_h = d_{2h-1}d_{2h}, \qquad \gamma_h = (1+y)^2 -d_{2h}-d_{2h+1}
\end{align}
(with the convention $d_0=0$). 
It remains to identify the left-hand side of \eqref{eqmul} with the right-hand side of \eqref{eqmul2}.
The end of the proof follows from the lemma below.
\end{proof}

\begin{lem}
 Let $(\gamma_h)_{h\geq0}$, $(\lambda_h)_{h\geq1}$ and $(d_h)_{h\geq0}$ be three sequences satisfying \eqref{relglc} with $d_0=0$. 
Then we have:
\begin{equation} \label{idfrac}  \begin{split}
   \frac{1-yz}{(1+z)(1+y^2z)-\gamma_0z} \cmo \frac{\lambda_1z^2}{(1+z)(1+y^2z)-\gamma_1z} \cmo 
   \frac{\lambda_2z^2}{(1+z)(1+y^2z)-\gamma_2z} \cmo \dots \\
 =  \frac 1{1-yz} \cmp \frac{d_1z}{1-yz} \cmp \frac{d_2z}{1-yz} \cmp \frac{d_3z}{1-yz} \cmp \cdots.
\end{split}\end{equation}
\end{lem}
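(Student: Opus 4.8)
The plan is to recognize \eqref{idfrac} as an instance of the classical even-contraction relation between Stieltjes-type and Jacobi-type continued fractions, after putting the left partial denominators in a convenient shape. Everything takes place in the ring of formal power series in $z$: each continued fraction below has all partial denominators with constant term $1$, so it defines an honest power series, and it suffices to transform one side of \eqref{idfrac} into the other.

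First I would simplify the left partial denominators. From $(1+z)(1+y^2z)=1+(1+y^2)z+y^2z^2$ and $(1+y^2)-(1+y)^2=-2y$, together with the hypothesis $\gamma_h=(1+y)^2-d_{2h}-d_{2h+1}$, one gets
\[
(1+z)(1+y^2z)-\gamma_h z=(1-yz)^2+(d_{2h}+d_{2h+1})z .
\]
Since $\lambda_h=d_{2h-1}d_{2h}$ and $d_0=0$, the left-hand side of \eqref{idfrac} equals
\[
\frac{1-yz}{(1-yz)^2+d_1z}\cmo\frac{d_1d_2z^2}{(1-yz)^2+(d_2+d_3)z}\cmo\frac{d_3d_4z^2}{(1-yz)^2+(d_4+d_5)z}\cmo\cdots .
\]

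Starting instead from the right-hand side, I would then carry out three moves. First, the equivalence transformation with ratio $(1-yz)^{-1}$ at every level turns $\frac{1}{1-yz}\cmp\frac{d_1z}{1-yz}\cmp\frac{d_2z}{1-yz}\cmp\cdots$ into $\frac{1}{1-yz}\bigl(\frac{1}{1}\cmp\frac{d_1w}{1}\cmp\frac{d_2w}{1}\cmp\cdots\bigr)$, where $w=z/(1-yz)^2$. Second, the even-contraction identity
\[
\frac{1}{1}\cmp\frac{e_1x}{1}\cmp\frac{e_2x}{1}\cmp\cdots
=\frac{1}{1+e_1x}\cmo\frac{e_1e_2x^2}{1+(e_2+e_3)x}\cmo\frac{e_3e_4x^2}{1+(e_4+e_5)x}\cmo\cdots ,
\]
applied with $e_i=d_i$ and $x=w$, replaces the bracketed $S$-fraction by a $J$-fraction in $w$. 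Third, the equivalence transformation with ratio $(1-yz)^2$ at every level --- using $w(1-yz)^2=z$ and $w^2(1-yz)^4=z^2$ to clear all denominators --- together with absorbing the leading factor $1/(1-yz)$ into the first numerator, yields precisely the $J$-fraction displayed above, hence the left-hand side of \eqref{idfrac}.

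The sole nontrivial input is the even-contraction identity: it is the classical ``even part'' of a Stieltjes continued fraction (see e.g.~\cite{Vie}), and in any case can be proved directly by checking that the numerator and denominator convergents of its two sides satisfy the same three-term recurrence. I expect the main difficulty to be purely bookkeeping: tracking partial numerators and denominators through the two equivalence transformations, keeping the $\cmp$ and $\cmo$ conventions straight, and making sure the prefactor $1/(1-yz)$ is carried along and absorbed only at the very last step.
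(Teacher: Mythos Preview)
Your proof is correct. Both you and the paper first rewrite the partial denominators on the left using $(1+z)(1+y^2z)=(1-yz)^2+(1+y)^2z$ together with $\gamma_h=(1+y)^2-d_{2h}-d_{2h+1}$, arriving at the same intermediate $J$-fraction (your displayed fraction, or equivalently the paper's \eqref{schr2} after clearing a factor of $1-yz$).

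Where you diverge is in the final identification with the right-hand side. You pull out the factor $1/(1-yz)$ from the $S$-fraction, obtaining a Stieltjes fraction in the variable $w=z/(1-yz)^2$, and then invoke the classical even-contraction formula to collapse it to the $J$-fraction; two equivalence transformations complete the bookkeeping. The paper instead proves this contraction combinatorially: it interprets the $S$-fraction as a generating function for weighted Schr\"oder paths, decomposes each path by cutting at every return to even height, and reads off \eqref{schr2} from the resulting block structure. Your route is shorter and purely algebraic, citing even-contraction as a black box; the paper's route is self-contained and supplies a bijective reason for the identity, at the cost of a longer argument. Both are perfectly valid.
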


\begin{proof}
In the left-hand side, divide the numerator and denominator of each fraction by $1-yz$. Using that 
\[
   \frac{(1+z)(1+y^2z)}{1-yz} = 1-yz + \frac{z(1+y)^2}{1-yz},
\]
the left-hand side of \eqref{idfrac} is
\begin{equation*}
 \frac{1}{1-yz+((1+y)^2-\gamma_0)\frac{z}{1-yz}} \cmo \frac{\lambda_1 \big(\frac{z}{1-yz}\big)^2 }{1-yz+((1+y)^2-\gamma_1)\frac{z}{1-yz}}
 \cmo \cdots,
\end{equation*}
hence it is equal to
\begin{equation} \label{schr2}
 \frac{1}{1-yz+ d_1\frac{z}{1-yz}} \cmo \frac{d_1d_2 \big(\frac{z}{1-yz}\big)^2 }{1-yz+(d_2+d_3)\frac{z}{1-yz}}
 \cmo \frac{d_3d_4 \big(\frac{z}{1-yz}\big)^2 }{1-yz+(d_4+d_5)\frac{z}{1-yz}} \cmo \cdots.
\end{equation}
This can be shown to be equal to the right-hand side of \eqref{idfrac}, using the combinatorics of weighted 
Schr\"oder paths. A {\it Schr\"oder path} (of length $2n$) is a path from $(0,0)$ to $(2n,0)$ in $\mathbb{N}^2$ with steps
$(1,1)$, $(1,-1)$ and $(2,0)$, respectively denoted by $\nearrow$, $\searrow$, $\longrightarrow$. We set that
each step $\longrightarrow$ has a weight $y$, each step $\nearrow$ from height $h-1$ to $h$ has weight $-d_h$. Then
by standard methods, the weighted generating function of all Schr\"oder paths is the continued fraction in the right-hand
side of \eqref{idfrac}. By counting differently, we can obtain \eqref{schr2}. The idea is to split each Schr\"oder path into 
some subpaths, by putting a splitting point each time the path arrives at even height. 
This way, we can see a Schr\"oder path as an ordered sequence of:
\begin{itemize}
 \item A subsequence $\nearrow\longrightarrow \dots \longrightarrow \searrow$
   starting at height $2h$ whose generating function
       is $-d_{2h+1}\frac{z}{1-yz}$.
 \item A subsequence $\searrow\longrightarrow \dots \longrightarrow \nearrow$ starting at height $2h$ whose generating function
       is $-d_{2h}\frac{z}{1-yz}$.
 \item A subsequence $\nearrow\longrightarrow \dots \longrightarrow \nearrow$ starting at height $2h$ whose generating function
       is $d_{2h+1}d_{2h+2}\frac{z}{1-yz}$.
 \item A subsequence $\searrow\longrightarrow \dots \longrightarrow \searrow$ starting at height $2h$ whose generating function
       is $\frac{z}{1-yz}$.
 \item A step $\longrightarrow$ at height $2h$ whose generating function
       is $yz$.
\end{itemize}
The rules according to which these subsequences can be put together is conveniently encoded in a continued fraction so that we 
obtain exactly \eqref{schr2}. More precisely, let $F_h$ be the generating function of Schr\"oder paths from height $2h$ to $2h$
and staying at height $\geq 2h-1$. We have :
\begin{equation} \label{relheight}
  F_h = \frac{1}{1-yz+(d_{2h}+d_{2h+1})\frac{z}{1+yz}-  d_{2h+1}d_{2h+2}(\frac{z}{1-yz})^2F_{h+1}}.
\end{equation}
Indeed, we can see a Schr\"oder paths from height $2h$ to $2h$ and staying at height $\geq 2h-1$ as an ordered sequence of:
\begin{itemize}
 \item steps $\longrightarrow$ at height $2h$ whose generating function
       is $yz$,
 \item subsequences $\nearrow\longrightarrow \dots \longrightarrow \searrow$ starting at height $2h$ whose generating function
       is $-d_{2h+1}\frac{z}{1-yz}$,
 \item subsequences $\searrow\longrightarrow \dots \longrightarrow \nearrow$ starting at height $2h$ whose generating function
       is $-d_{2h}\frac{z}{1-yz}$,
 \item subsequences $\nearrow\longrightarrow \dots \longrightarrow \nearrow  P \searrow\longrightarrow \dots \longrightarrow \searrow $
       where $P$ is a path from height $2h+2$ to $2h+2$ staying above height $2h+1$ whose generating function
       is $d_{2h+1}d_{2h+2}(\frac{z}{1-yz})^2F_{h+1}$.
\end{itemize}
Hence we obtain \eqref{relheight}.
Using \eqref{relheight} for successive values of $h$, we obtain that $F_0$ is the continued fraction in \eqref{schr2}. This 
completes the proof.
\end{proof}

Let us examine the case $q=0$ in the formula \eqref{formulean1}. We obtain:
\[
   B_n(y,1,0) = \sum_{k=0}^n (-1)^k \sum_{i=0}^{2n-2k} y^i \tbinom{n}{k+\lceil i/2 \rceil} \tbinom{n}{\lfloor i/2 \rfloor} 
   = \sum_{i=0}^{2n} y^i \tbinom{n}{\lfloor i/2 \rfloor} \sum_{k=0}^{\lfloor n-\frac{i}2 \rfloor} (-1)^k \tbinom{n}{k+\lfloor i/2 \rfloor}.
\]
The alternating sum of binomial coefficients is itself a binomial coefficient,
and we obtain:
\[
  B_n(y,1,0) = \sum_{i=1}^{2n} y^i \binom{n}{\lfloor i/2 \rfloor} \binom{n-1}{\lceil i/2 \rceil -1}.
\]
We thus obtain, as mentioned in Theorem~\ref{qeulrianb}, that the value at $q=0$ of the $q$-Eulerian numbers of type $B$ are the Narayana numbers of type $B$:
\[
  E_{n,k}^B(0) = [y^{2k}]B_n(y,1,0) + [y^{2k+1}]B_n(y,1,0) =  \binom{n}{k} \binom{n-1}{k-1} + \binom{n}{k}\binom{n-1}{k} = \binom{n}{k}^2.
\]

\begin{remark}
  One can prove the identity $E_{n,k}^B(0) = \binom{n}{k}^2$ bijectively as
  follows.  Considering $\pi\in B_n$ as a permutation on $[\pm n] = \{\pm 1,
  \pm2, \dots,\pm n\}$, define $f(\pi)$ to be the partition of $[\pm n]$
  obtained by making cycles of $\pi$ into blocks. It is not difficult to show
  that the map $f$ is a bijection from the set of $\pi\in B_n$ with
  $\cro(\pi)=0$ to the set of type $B$ noncrossing partitions of $[\pm n]$ such
  that if $\floor{\fwex(\pi)/2}=k$ then $f(\pi)$ has $2k$ nonzero blocks.  It is
  well known that $\binom{n}{k}^2$ is the number of type $B$ noncrossing
  partitions of $[\pm n]$ with $2k$ nonzero blocks, see \cite{Reiner1997}.
\end{remark}

\section{Open problems}
\label{conc}

We conclude this paper by a list of open problems.\\

\noindent{\bf Problem 1.} Since the introduction of permutation tableaux in \cite{Williams2005,Steingrimsson2007}, several variants have been defined
\cite{ABD,ABN2010,Vie2}. A nice feature of these variants is that the permutation statistics arise naturally,
from a recursive construction of the tableaux via an insertion algorithm \cite{ABN2010} . The type $B$ version of these 
tableaux can be defined with the condition of being conjugate-symmetric. A natural question is to check
whether the insertion algorithm  can be used to recover some of our results.\\

\noindent{\bf Problem 2.} One key feature of our new $q$-Eulerian polynomials of type $B$ is their symmetry, {\it i.e.} we have
$B_{n,k}^*(t,q)=B_{n,2n+1-k}^*(t,q)$. We prove this symmetry using the pignose diagram of a signed permutation.
It would be interesting to show this symmetry using the permutation tableaux of type $B$.\\

\noindent{\bf Problem 3.} We have defined alignments in Section~\ref{cro_ali}
and showed that for a signed permutation $\pi$ with $\fwex(\pi)=k$, we have
$2\cro(\pi)+\al(\pi)=n^2-2n+k$. A similar identity exists for the type $A$, see
Proposition~\ref{prop:corteel_intro}, and can be shown on permutations or
directly on permutation tableaux. It would be elegant to show our identity
directly on the permutation
tableaux of type $B$. \\

\noindent{\bf Problem 4.} A  notion that is closely related to alignments and in some sense dual to the 
crossing, is the one of {\it nesting} \cite{Corteel2007}. When we introduce a parameter $p$ counting the number
of crossings in permutations, there are continued fractions containing $p,q$-integers rather than the 
$q$-integers, see  \cite{Corteel2007,SZ}. A definition of nestings in signed permutations have been given 
by Hamdi \cite{hamdi}. It would be interesting to check if our results can be generalized to take into 
account these nestings. \\

\noindent{\bf Problem 5.} In the last section, we have obtained a formula for $B_n(y,1,q)$. We can ask if there is a more general formula
for $B_n(y,t,q)$, but it seems that the present methods do not generalize in this case. \\

\noindent{\bf Problem 6.} Recently Kim and Stanton \cite{kimstanton} gave a
combinatorial proof of the formula \eqref{eq:y0q} for $B_n(y,0,q)$, which is a
generating function for type $A$ permutation tableaux. It is worth asking
whether this combinatorial approach can be generalized for $B_n(y,1,q)$ and
possibly $B_n(y,t,q)$.

\section*{Acknowledgements.} We thank Philippe Nadeau and Lauren Williams for
constructive discussions during the elaboration of this work.

\bibliographystyle{plain}

\begin{thebibliography}{1}

\bibitem{adin}
 R.M. Adin, F. Brenti and Y. Roichman.
 \newblock Descent numbers and major indices for the hyperoctahedral group.
 \newblock {\em Adv. in Appl. Math.}, 27(2-3):210--224, 2001;
 \newblock Special issue in honor of Dominique Foata's 65th birthday
  (Philadelphia, PA, 2000).

\bibitem{ABD}
 J.-C. Aval, A. Boussicault and S. Dasse-Hartaut.
 \newblock Dyck tableaux.
 \newblock  {\em Preprint}, 2011.

\bibitem{ABN2010}
 J.-C. Aval, A. Boussicault and P. Nadeau. 
 \newblock Tree-like tableaux. 
 \newblock  {\em Proceedings of FPSAC2011}, 2011.

\bibitem{bjorner}
 A. Bj\"orner and F. Brenti.
 \newblock Combinatorics of Coxeter groups. 
 \newblock Springer, 2005.


\bibitem{Carlitz1975}
 L. Carlitz.
 \newblock A combinatorial property of $q$-Eulerian numbers.
 \newblock {\em Amer. Math. Monthly}, 82:51--54, 1975.

\bibitem{comtet}
 L. Comtet. 
 Advanced combinatorics.
 Reidel Publishing Company, 1974.

\bibitem{Corteel2007}
 S. Corteel,
 \newblock Crossings and alignments of permutations.
 \newblock {\em Adv. in Appl. Math.}, 38(2):149--163, 2007.

\bibitem{CJW}
 S. Corteel, M. Josuat-Verg\`es and L.K. Williams.
 \newblock Matrix Ansatz, orthogonal polynomials and permutations.
 \newblock {\em Adv. in Appl. Math.}, 46:209--225, 2011.

\bibitem{CK}
 S. Corteel and J.S. Kim.
 \newblock Combinatorics on permutation tableaux of type $A$ and type $B$.
 \newblock {\em European J. Combin.}, 32(4):563--579, 2011.

\bibitem{CN}
 S. Corteel and P. Nadeau.
 \newblock Bijections for permutation tableaux.
 \newblock {\em European J. Combin.}, 30:295--300, 2009.

\bibitem{foata}
 D. Foata and G. Han.
 \newblock Signed words and permutations, V; a sextuple distribution.
 \newblock{ \em Ramanujan J.}, 19(1):29--52, 2009.

\bibitem{FZ}
 D. Foata and D. Zeilberger.
 \newblock Denert's permutation statistic is indeed Euler-Mahonian.
 \newblock{\em Stud. Appl. Math.}, 83(1):31--59, 1990.

\bibitem{FV}
 J. Fran\c con and X.G. Viennot. 
 \newblock Permutations selon leurs pics, creux, doubles mont\'ees et double descentes, nombres d'Euler et nombres de Genocchi.
 \newblock{\em Discrete Math.}, 28(1):21--35, 1979.

\bibitem{hamdi}
 A. Hamdi. 
 \newblock Symmetric Distribution of Crossings and Nestings in Permutations of Type $B$. 
 \newblock{\em Electr. J. Comb.}, 18(1): Article P200, 2011.

\bibitem{J}
 M. Josuat-Verg\`es.
 \newblock Rooks placements in Young diagrams and permutation enumeration.
 \newblock{\em Adv. in Appl. Math.} 47:1--22, 2011.

\bibitem{JK}
 M. Josuat-Verg\`es and J.S. Kim.
 \newblock Touchard-Riordan formulas, T-fractions, and Jacobi's triple product identity. 
 \newblock {\em Proceedings of FPSAC2011}, 2011.

\bibitem{josuatrubey}
 M. Josuat-Verg\`es and M. Rubey.
 \newblock Crossings, Motzkin paths, and moments.
 \newblock{\em Discrete Math.}, 311:2064--2078, 2011.

\bibitem{kimstanton} J.S. Kim and D. Stanton. 
\newblock On enumeration formulas for
  weighted Motzkin paths.
\newblock {\em in preparation}.



\bibitem{LamWilliams}
 T. Lam and L.K. Williams. 
 \newblock Total positivity for cominuscule Grassmannians.
 \newblock{\em New York J. of Math.}, 14:53--99, 2008.

\bibitem{LV}
 P. Leroux and X.G. Viennot. 
 Combinatorial resolution of systems of differential
 equations, I. Ordinary differential equations. In ``Combinatoire \'enum\'erative'',
 eds. G. Labelle and P. Leroux, Lecture Notes in Maths. 1234, Springer-Verlag,
 Berlin, 1986, pp. 210--245.

\bibitem{lothaire}
 Lothaire. 
 \newblock Combinatorics on words. 
 \newblock Addison--Wesley, 1983.

\bibitem{MV}
 A. de~M{\'e}dicis and X.G. Viennot.
 \newblock Moments des $q$-polyn\^omes de Laguerre et la bijection de Foata-Zeilberger.
 \newblock {\em Adv. in Appl. Math.}, 15(3):262--304, 1994.



\bibitem{Reiner1997}
V. Reiner.
\newblock Non-crossing partitions for classical reflection groups.
\newblock {\em Discrete Math.}, 177:195--222, 1997.

\bibitem{SZ}
 H. Shin and J. Zeng.
 The symmetric and unimodal expansion of Eulerian polynomials via continued fractions.
 \newblock{\em European J. Combin.}, 33(2):111--127, 2012.



\bibitem{EC1}
R.P. Stanley.
\newblock {\em Enumerative Combinatorics. {V}ol. 1. second edition (version of
  15 July 2011)}.
\newblock to be published in Cambridge University Press, 2011.

\bibitem{Steingrimsson2007}
 E. Steingr\'imsson and L.K. Williams.
 \newblock Permutation tableaux and permutation patterns.
 \newblock {\em J. Combin. Theory Ser. A}, 114:211--234, 2007.

\bibitem{Vie}
 X.G. Viennot. 
 Une th\'eorie combinatoire des polyn\^omes orthogonaux. 
 Lecture notes, UQ\`AM, Montr\'eal, 1984.

\bibitem{Vie2}
X.G. Viennot. Alternative tableaux and partially asymmetric exclusion process.
 Talk in Isaac Newton institute, April 2008.

\bibitem{Williams2005}
 L.K. Williams.
 \newblock Enumeration of totally positive Grassmann cells.
 \newblock {\em Adv. Math.}, 190:319--342, 2005.

\end{thebibliography}

\end{document}